\newtheorem*{remark}{Remark}
\begin{document}

\setcopyright{cc}
\setcctype{by}
\acmDOI{10.1145/3776703}
\acmYear{2026}
\acmJournal{PACMPL}
\acmVolume{10}
\acmNumber{POPL}
\acmArticle{61}
\acmMonth{1}
\received{2025-07-09}
\received[accepted]{2025-11-06}

\title{Di- is for Directed: \\ First-Order Directed Type Theory via Dinaturality}

\renewcommand{\shorttitle}{Di- is for Directed: First-Order Directed Type Theory via Dinaturality}

\author{Andrea Laretto}
\orcid{0000-0002-6413-5794}
\email{andrea.laretto@taltech.ee}
\author{Fosco Loregian}
\orcid{0000-0003-3052-465X}
\email{fosco.loregian@taltech.ee}
\author{Niccolò Veltri}
\orcid{0000-0002-7230-3436}
\email{niccolo.veltri@taltech.ee}
\affiliation{%
  \institution{TalTech}
  \city{Tallinn}
  \country{Estonia}
}

\renewcommand{\shortauthors}{Andrea Laretto, Fosco Loregian, Niccolò Veltri}

\begin{abstract}
  We show how dinaturality plays a central role in the interpretation of directed type theory where types are given by (1-)ca\-te\-go\-ries and directed equality by $\hom$-functors. We introduce a first-order directed type theory where types are semantically interpreted as categories, terms as functors, predicates as dipresheaves, and proof-relevant entailments as dinatural transformation. This type theory is equipped with an elimination principle for directed equality, motivated by dinaturality, which closely resembles the $\J$-rule used in \ML type theory. This directed $\J$-rule comes with a simple syntactic restriction which recovers all theorems about symmetric equality, except for symmetry. Dinaturality is used to prove properties about transitivity (composition), congruence (functoriality), and transport (co\-Yo\-ne\-da) in exactly the same way as in \ML type theory, and allows us to obtain an internal ``naturality for free''. We then argue that the quantifiers of directed type theory should be ends and coends, which dinaturality allows us to capture formally. Our type theory provides a formal treatment to (co)end calculus and Yoneda reductions, which we use to give distinctly logical proofs to the (co)Yoneda lemma, the adjointness property of Kan extensions via (co)ends, exponential objects of presheaves, and the Fubini rule for quantifier exchange. Our main theorems are formalized in Agda.
\end{abstract}

\begin{CCSXML}
<ccs2012>
   <concept>
       <concept_id>10003752.10003790.10011740</concept_id>
       <concept_desc>Theory of computation~Type theory</concept_desc>
       <concept_significance>500</concept_significance>
       </concept>
 </ccs2012>
\end{CCSXML}

\ccsdesc[500]{Theory of computation~Type theory}

\keywords{directed type theory, coend calculus, dinaturality}

\maketitle

\definecolor[named]{ACMPurple}{cmyk}{0.55,1,0,0.15}

\newcommand{\CiteAppendix}[2]{\Cref{#2}}

\createrule{exp}{\textsf{exp}}
\createrule{expinv}{\textsf{exp}$^{-1}$}
\createrule{var}{\textsf{var}}
\createrule{prod}{\textsf{prod}}
\createrule{cutassoc}{\textsf{assoc-nat-din-nat}}
\createrule{cutnat}{\textsf{cut-nat}}
\createrule{cutdin}{\textsf{cut-din}}
\createrule{op}{\textsf{op}}
\createrule{reindex}{\textsf{idx}}
\createrule{refl}{$\refl$}
\createrule{top}{$\top$}
\createrule{dinathomnat}{\textsf{dinat-$\hom$nat}}
\createrule{J}{$\J$}
\createrule{Jinv}{$\J^{-1}$}
\createrule{hom}{$\hom$}
\createrule{Jcomp}{\textsf{$\J$-comp}}
\createrule{Jop}{\textsf{$\J$-op}}
\createrule{Jeq}{\textsf{$\J$-eq}}
\createrule{opeq}{\textsf{$\op$-eq}}
\createrule{eq}{\textsf{eq}}
\createrule{classicJ}{\textsf{$\J$=}}
\createrule{classicrefl}{\textsf{$\refl$=}}
\createrule{end}{\textsf{end}}
\createrule{endinv}{\textsf{end}$^{-1}$}
\createrule{coend}{\textsf{coend-without-frobenius}}
\createrule{coendinv}{\textsf{coend}$^{-1}$}
\createrule{coyoneda}{\textsf{coYoneda}}
\createrule{yoneda}{\textsf{Yoneda}}
\createrule{homfunc}{\textsf{$\Rightarrow$-func}}
\createrule{coendfrobenius}{\textsf{coend}}
\createrule{weaken}{\textsf{wk}}
\createrule{enddinr}{\textsf{end-din}$_r$}
\createrule{enddinl}{\textsf{end-din}$_l$}
\createrule{endnatr}{\textsf{end-nat}$_r$}
\createrule{endnatl}{\textsf{end-nat}$_l$}
\createrule{natcutcoherence}{\textsf{cut-coherence}}
\createrule{cutnatidl}{\textsf{cut-nat-id}$_l$}
\createrule{cutnatidr}{\textsf{cut-nat-id}$_r$}
\createrule{cutdinidl}{\textsf{cut-din-id}$_l$}
\createrule{cutdinidr}{\textsf{cut-din-id}$_r$}
\createrule{contract}{\textsf{contr}}
\createrule{coendunit}{\textsf{coend-unit}}
\createrule{endcounit}{\textsf{end-counit}}

\def\agdabase{https://github.com/iwilare/dinaturality/blob/popl-2026/}

\section{Introduction}\label{sec:introduction}

Homotopy type theory~\cite{UnivalentFoundationsProgram2013homotopy,Awodey2009homotopy,Berg2010types} revolutionized the way we think about types. One of the fundamental insights that inspired this revolution was first given in a seminal paper by Hofmann and Streicher~\cite{Hofmann1998groupoid}, with a remarkably simple idea: rather than viewing types just as \emph{sets} of inhabitants, they give an interpretation of \MLTT where types are taken to be \emph{groupoids}, i.e., categories in which every morphism is an isomorphism. The inhabitants of a type become the objects of a groupoid, and the morphisms in a groupoid represent the \emph{equalities} between inhabitants, of which there can be more than a unique one. The reason why morphisms need to be invertible is because of the inherently \emph{symmetric} nature of equality: given a proof of equality $e : x = y$, there is always a proof of the equality $e' : y = x$.

A natural question follows: why not \emph{categories}, rather than groupoids? Can there be a type theory where types are interpreted as \emph{categories}, where morphisms need not be invertible?
Such a system should take the name of \emph{directed type theory}~\cite{Ahrens2023bicategorical,North2019towards,Altenkirch2024synthetic,Licata20112,Weaver2020constructive,Gratzer2024directed}, where the directed aspect comes precisely from this asymmetric interpretation of ``equality''.

\begin{figure}[ht]
\begin{tabular}{r|l}
Types $C$ & Categories $\C$ \\
Functions $f : C \to D$ & Functors $F : \C \to \D$ \\
Relations $R : C \times D \to \textsf{Bool}$ & Profunctors $P : \Cop\times\D \> \Set$ \\
Predicates $P : C \to \textsf{Bool}$ & Presheaves $P : \Cop \> \Set$ \\
Points of a type & Objects of a category \\
Equalities $e : a =_C b$ & Morphisms $e : \hom_\C(a,b)$ \\
Equality types $=_C : C \times C\to\Type$ & Hom functors $\hom_\C : \Cop \x \C \> \Set$ \\
Universal quantifiers & Ends $\Endf{x:\C}P(\nxx)$ \\
Existential quantifiers & Coends $\Coendf{x:\C}P(\nxx)$ \\
\end{tabular}
\Description{A table with two columns comparing logical concepts with their directed generalizations. The left column lists types, functions, relations, predicates, points of a type, equalities, equality types, universal quantifiers, and existential quantifiers. The right column lists categories, functors, profunctors, presheaves, objects of a category, morphisms, hom functors, ends, and coends respectively.}
\caption{The directed generalization of logical concepts.}
\label{table:directed_generalization}
\end{figure}

Directed type theory has been a hot topic of type-theoretical research for the past decade~\cite{Nuyts2023higher,New2023formal,Gratzer2025yoneda,Altenkirch2024synthetic,Chu2024directed,Chu2025dependent,Weaver2024bicubical,Neumann2025generalized}. This quest for the directed generalization has a specific application in mind: in the same way that HoTT can be used to study homotopy theory in a type-theoretical way, directed type theory promises the study of \emph{category theory} in a type-theoretical way.

Category theory has proven to be a fundamental topic in the semantics of programming languages~\cite{Scott2000some,Moggi1991notions,Crole1994categories,Lambek1986introduction}, where it shines as the common framework that ties together logic, proofs, and types in the Curry-Howard-Lambek correspondence~\cite{Jacobs1999categorical,Casadio2021Joachim,Harper2016practical}. The \emph{unifying} role of category theory stretches even beyond computer science, in algebraic topology~\cite{MacLane1998categories}, universal algebra~\cite{Lawvere1963functorial}, quantum mechanics~\cite{Heunen2019categories}, and physics~\cite{Baez2010physics}.

This compelling series of applications comes at a cost: category theory can be overwhelming for newcomers, with overly abstract results and seemingly complicated ideas (e.g., the Yoneda lemma~\cite{Boisseau2018what}, Kan extensions~\cite{Hinze2012kan}). Even worse, these abstractions come baggaged with a plethora of naturality and functoriality side conditions that need to be checked~\cite{New2023formal}.

Directed type theory promises to reinterpret category theory \emph{itself} under a logical perspective, taking the Curry-Howard-Lambek correspondence to the next level: what once were abstract yet overarching results in category theory become \emph{simple type-theoretical statements}, which one can then prove in a system that takes care of naturality and functoriality bureaucracy \emph{for free}.

One of the ultimate goals of directed type theory is to capture this multitude of directed phenomena under a single, unified type-theoretical framework: since morphisms of a category can be viewed just as (directed) equalities, one can use directed type theory as a tool to represent and reason about programs, processes, rewrites, transitions~\cite{Ahrens2022semantics}, concurrency via directed spaces~\cite{North2019towards,Fajstrup2016directed}, types and terms of type theories (e.g., via ``directed higher inductive types''~\cite{Kavvos2019quantum,Weaver2024bicubical}), \emph{all internally} to the same type theory.

What is currently missing from the current conception of directed type theory is a direct description of what such a system should look like in the elementary case of 1-categories. Taking inspiration from the simplicity of the groupoid model in Hofmann and Streicher's approach,

\medskip
\begin{center}
\emph{{We introduce a first-order directed type theory with simple, straightforward semantics in 1-categories:} \\ proving theorems about directed equality follows the same exact steps of Martin-L\"of type theory, \\ and non-trivial theorems in category theory can be captured in a concise and distinctly logical way.}
\end{center}
\medskip

\noindent How should type-theoretical ideas change under the view of directed type theory? Category theorists have long known what the most natural path for the directed generalization should be~\cite{Lawvere1970equality}: functions between types should be \emph{functors} (i.e., functions which respect directed equalities), relations are naturally interpreted as \emph{profunctors}~\cite{Borceux1994handbook}, and \emph{(co)presheaves} can be thought of as generalized predicates~\cite{Bainbridge1976feedback}. We summarize the main ideas of the \emph{directed generalization} in \Cref{table:directed_generalization}.

Under this directed lens, familiar type-theoretical statements of equality become elementary definitions in category theory: we give a few simple examples in \Cref{table:symmetric_directed} in the canonical setting of \emph{first-order logic}, which is closely connected to the formal system later explored in this paper.

\begin{figure}[ht]
\begin{tabular}{c|l}
\multirow{2}{*}{
 $\begin{array}{r@{\ }c@{\ }l@{\ }c@{\ }c}
{x = y \hspace{0.6em}} & \land & \hspace{0.6em} y = z & \vdash & x = z \\[0.2em]
\hom_\C(x,y) & \times & \hom_\C(y,z) & \to & \hom_\C(x,z)
 \end{array}$} & Transitivity of equality \\[0.2em]
 & Composition in a category \\[0.5em]\hline\\[-0.8em]

\multirow{2}{*}{
 $\begin{array}{c@{\ }c@{\ }c}
x = y & \vdash & f(x) = f(y) \\[0.2em]
\hom_\C(x,y) & \to & \hom_\D(F(x),F(y))
 \end{array}$} & Congruence / functions respect equality \\[0.2em]
 & Action on morphisms of functors \\[0.5em]\hline\\[-0.8em]

\multirow{2}{*}{
 $\begin{array}{r@{\ }c@{\ }c@{\ }c@{\ }c}
x = y & \land & P(x) & \vdash & P(y) \\[0.2em]
\hom_\C(x,y) & \times & P(x) & \to & P(y)
 \end{array}$} & Substitution / transport along equality \\[0.2em]
& Action on morphisms of copresheaves
\end{tabular}
\caption{Elementary statements for symmetric equality and their directed counterparts.}
\Description{A table with two columns, with each row alternating an elementary theorems for symmetric equality and its directed counterpart. On the left side, three rows show the statements: transitivity of equality, congruence/functions respect equality, and substitution/transport along equality. On the right side, each row describes the corresponding concept in category theory: composition in a category, action on morphisms of functors, and action on morphisms of copresheaves.}
\label{table:symmetric_directed}
\end{figure}

However, directed type theory is not so straightforward. We list some fundamental challenges:

\textbf{Challenge 1. How to change rules for equality.} One can use their favorite proof assistant or logical system to prove the theorems in \Cref{table:symmetric_directed}: in the case of symmetric equality, typically this is done using an introduction rule $\Ruleclassicrefl$ and an elimination rule $\RuleclassicJ$ called \emph{$\J$-rule}~\cite{Hofmann1997syntax}, shown in \Cref{fig:j_and_refl} again for first-order logic. The introduction rule simply states that equality is reflexive. The elimination rule $J$ intuitively says that, if we assume an equality $e : a = b$ and we want to prove a predicate $P(a,b)$ for some variables $a,b : C$, it is sufficient to consider the case ``on the \emph{diagonal}'' $P(x,x)$, where $a$ and $b$ are identified with the same $x$. These two rules allow all of the above statements about symmetric equality to be derived almost ``for free'' just by contracting away equalities. However, $\RuleclassicJ$ allows for symmetry of equality to be derived, simply by picking $P(a,b) := b = a$. This is incompatible with the directed case, as not every morphism has an inverse.

The fundamental question then becomes: \emph{how can we tweak the rules of equality to disallow symmetry, and yet be able to derive ``for free'' the above theorems also in the case of directed equality?}

\begin{figure}[ht]%
\LinkRuleclassicJ%
\LinkRuleclassicrefl%
\centering
\begin{prooftree}
\infer0[\Ruleclassicrefl]{[x:C]~\Phi \vdash \refl : x = x}
\end{prooftree}
\quad
\begin{prooftree}
\hypo{[x : C]~\takespace{a = b, \, \Phi(a,b) }{\Phi(x,x)} \vdash & \takespace{J(h)}{h} : P(x,x)}
\infer1[\RuleclassicJ]{[a : C, b : C]\ a = b, \, \Phi(a,b) \vdash & J(h) : P(a,b)}
\end{prooftree}
\caption{Introduction and elimination rules for symmetric equality in first-order logic.}
\label{fig:j_and_refl}
\vspace{0.7em}
\begin{prooftree}
\infer0[\Rulerefl]{[x : \C]~\Phi \vdash & \textsf{refl} : \hom_\C(\nxx)}
\end{prooftree}
\quad
\begin{prooftree}
\hypo{[x : \C]~\takespace{\hom(a,b), \, \Phi(\overline a, \overline b)}{ \Phi(x,\n x)} \vdash & \takespace{J(h)}{h} : P(\nxx)}
\infer1[\RuleJ]{[a : \Cop, b : \C]\ {\hom}(a,b), \, \Phi(\overline a, \overline b) \vdash & J(h) : P(a,b)}
\end{prooftree}
\Description{A figure showing the introduction and elimination rules for directed equality in first-order dinatural directed type theory.}
\caption{Introduction and elimination rules for directed equality in first-order dinatural directed type theory.}
\label{fig:directed_j}
\end{figure}%

\textbf{Challenge 2. Polarity problems.} Another issue arises in the first example of \Cref{table:symmetric_directed}: since types are now categories, with each type $\C$ there should be a type $\C^\op$ (the opposite category) of the opposite ``polarity'', where the inhabitants are the same but all directed equalities are reversed. The \emph{type of directed equalities} $\hom_\C(x,y)$ then is \emph{asymmetric}, and receives a ``negative'' argument $x\!:\!\C^\op$ and a ``positive'' one $y\!:\!\C$, and provides the \emph{set} (i.e., a category with only trivial directed equalities) of morphisms between objects $x,y$ of $\C$.

The problem is that in the statement for transitivity of directed equality (i.e. composition) the variable $y$ appears both on the right side of $\hom_\C(x,y)$, with type $\C$, and at the same time on the left side of $\hom_\C(y,z)$, with seemingly different type $\Cop$! The same problem arises in $\Rulerefl$, since $x$ is used on both sides of $\hom$, and in $\RuleJ$ because in $P(x,x)$ the same $x$ needs to be used with both polarities. One solution first considered by North~\cite{North2019towards} and later revisited by Altenkirch~and~Neumann~\cite{Altenkirch2024synthetic} is to revert back to the undirected case of \emph{groupoids}. This solution may feel unsatisfactory, since one does not intuitively expect groupoids to appear in the semantics of a type theory where types are \emph{categories}. \emph{How do we solve these polarity problems without having to resort to groupoids?}

\textbf{Challenge 3. Directed quantifiers.} Another fundamental yet unexplored question is \emph{what the quantifiers of directed type theory should be in the 1-categorical case}. Because of the above polarity issues, this is not a trivial question: should the variable $y$ in the statement of transitivity be bound as a variable of type $y:\C$ or $\n y:\Cop$? A natural expectation is that quantifiers should be able to bind \emph{both} occurrences of $y$ at once.

\medskip
\begin{center}
\emph{This paper proposes a simple solution that addresses \\ all of the above challenges for directed type theory: {dinaturality}~\cite{Dubuc1970dinatural}.}
\end{center}
\medskip

The intuition behind dinaturality and dinatural transformations is that the same variable is allowed to appear both positively and negatively at the same time, irrespectively of polarity.

Not only do we deal with the variance problems without ever having to mention groupoids, but dinaturality also tells us what a \emph{directed} $\J$ rule should look like, which we illustrate in \Cref{fig:directed_j} next to the symmetric case. Curiously, this rule is reminiscent of the elimination rule for equality of standard \MLTT, but it comes equipped with a precise syntactic restriction that does not allow symmetry of directed equality to be derived.

What about quantifiers? Dinaturality comes again to the rescue, hinting at a possible answer: intimately connected to the notion of dinatural transformation are the notions of \emph{end} and \emph{coend}~\cite{Loregian2021coend}. Ends and coends, respectively denoted as $\Endf{x:\C}P(\nxx)$ and $\Coendf{x:\C}P(\nxx)$ for some functor $P:\Cop\x\C\to\Set$, are to be thought of as a sort of universal and existential quantifiers on $P$, respectively. Just like a quantifier, the integral sign of (co)ends binds positive and negative occurrences of variables, indicated as $x:\C$ and $\overline x:\C^\op$.

The main application of (co)ends is that they allow non-trivial statements in category theory to be formulated in a concise way~\cite{Loregian2021coend}: for example, one can use ends to characterize the set of natural transformations as the end $\Nat(F,G) \iso \Endf{x:\C}\!\hom_\D(F(\overline x),G(x))$; note the resemblance between this end and the universal quantification used in the usual definition of natural transformation. With this, we can rephrase the well-known Yoneda lemma~\cite{Leinster2014basic} as a simple isomorphism, shown in \Cref{fig:intro_yoneda} next to its logical ``decategorified'' interpretation. A similar statement holds for the case of existential quantifiers and coends, shown in \Cref{fig:intro_coyoneda}, which often takes the slogan of ``presheaves are colimits of representables''~\cite{Leinster2014basic} or ``co\-Yo\-ne\-da lemma''~\cite{Loregian2021coend,Clarke2022profunctor}.

\begin{figure}[ht]
\begin{subfigure}{0.45\textwidth}
    \centering
\def\arraystretch{1.2}
\caption{\begin{tabular}{c@{\ }c@{\ }c@{}c@{\ }c@{\ }c}
    $P(a)$ & $\iso$ & $\Endf{x:\C}    $ & $\hom_\C(a,\overline x)$ & $\Rightarrow$ & $ P(x) \vspace{0.2em}$ \\ \arrayrulecolor{lightgray}\hline\hline
    $P(a)$ & $\Leftrightarrow$ & $\forall(x:C).$ & $a =_C x$ & $\Rightarrow$ & $ P(x)$
\end{tabular}}
\label{fig:intro_yoneda}
\end{subfigure}
\begin{subfigure}{0.45\textwidth}
    \centering
\def\arraystretch{1.2}
\caption{\begin{tabular}{c@{\ }c@{\ }c@{}c@{\ }c@{\ }c}
    $P(a)$ & $\iso$ & $\Coendf{x:\C}    $ & $\hom_\C(\n x,a)$ & $\x$ & $ P(x) \vspace{0.2em}$ \\ \arrayrulecolor{lightgray}\hline\hline
    $P(a)$ & $\Leftrightarrow$ & $\exists(x:C).$ & $x =_C a    $ & $\land$ & $ P(x)$
\end{tabular}}
\label{fig:intro_coyoneda}
\end{subfigure}
\Description{A figure showing the Yoneda lemma and coYoneda lemma using (co)ends and their corresponding logical statements.}
\caption{Yoneda and coYoneda lemma using (co)ends and their corresponding logical statements.}\end{figure}

The first-order formula behind the (co)Yoneda lemma can be proven using any formal system: our directed type theory is the first elementary treatment of a formal system for the \emph{directed} case, where one can modularly use rules for quantifiers and equality as done in logic, e.g., with suitable introduction/elimination rules specific to directed equality and (co)ends. To give a taste of how closely our approach follows that of a standard logical proof, we show in \Cref{fig:yoneda_lemma_proof_intro} a proof of the Yoneda lemma in our type theory next to its ``decategorified'' proof in first-order logic.

\begin{figure}
\centering
\hspace{-1.5em}
\begin{minipage}{0.40\textwidth}
\begin{prooftree}
   \infer[no rule]0{[a\!:\!C]~\Phi(a) \vdash \forall(x:C).~a =_C x \Rightarrow P(x)}
   \infer[double]1[\seqrule{$\forall$}]{[a\!:\!C,x\!:\!C]~\Phi(a) \vdash a =_C x \Rightarrow P(x)}
   \infer[double]1[\seqrule{$\Rightarrow$}]{[a\!:\!C,x\!:\!C]\hspace{1.6pt}\ a =_C x \land \Phi(a) \hspace{1.3pt} \vdash P(x)}
   \infer[double]1[\seqrule{$=$}]{[z:C]~\Phi(z) \vdash P(z)}
\end{prooftree}
\end{minipage}
\hspace{2.5em}
\begin{minipage}{0.40\textwidth}
\begin{prooftree}
   \infer[no rule]0{[a\!:\!\C]~\Phi(a) \vdash \Endf{x:\C}\ {\hom}_\C(a,\overline x) \Rightarrow P(x)}
   \infer[double]1[\Ruleend]{[a\!:\!\C,x\!:\!\C]~\Phi(a) \vdash \hom_\C(a,\overline x) \Rightarrow P(x)}
   \infer[double]1[\Ruleexp]{[a\!:\!\C,x\!:\!\C]\hspace{0.86pt}\ \hom_\C(\overline a,x)\hspace{0.8pt}\x\hspace{0.8pt}\Phi(a) \hspace{1.3pt} \vdash P(x)}
   \infer[double]1[\RuleJ]{[z:\C]~\Phi(z) \vdash P(z)}
\end{prooftree}
\end{minipage}
\Description{A figure showing a proof of the Yoneda lemma in first-order logic and its proof in dinatural directed type theory, using bidirectional derivation rules.}
\caption{A proof of the Yoneda lemma in first-order logic, and its proof in dinatural directed type theory.}
\label{fig:yoneda_lemma_proof_intro}
\vspace{-1.5em}
\end{figure}

{ \textbf{(Co)end calculus.}} It is common knowledge among category theorists that there is a formal aspect to the manipulation of ends and coends, outlined in~\cite{Loregian2021coend}, that allows such non-trivial theorems to be proven using simple ``mechanical'' rules. This ``(co)end calculus'' has proven to be particularly useful in theoretical computer science, for example in the context of profunctor optics~\cite{Clarke2022profunctor,Boisseau2018what} and their string diagrams~\cite{Roman2020open,Boisseau2020string}, strong monads and functional programming~\cite{Asada2010arrows,Asada2010categorifying,Hinze2012kan,Uustalu2020eilenberg}, quantum circuits~\cite{Hefford2023coend}, and logic~\cite{Pistone2018proof,Pistone2021yoneda,Galal2020profunctorial}. Our work gives a \emph{logical interpretation} to (co)end calculus by reconceptualizing it just as a first-order instance of directed type theory, which is what motivates our focus on a non-dependent presentation of directed type theory.

{\textbf{Dinaturality.}} Dinaturality is not a novel concept: dinatural transformations are a generalization of natural transformations for functors $F,G : \Cop\x\C \> \D$ with mixed-variances~\cite{Dubuc1970dinatural}.

Serendipitously, the ``\emph{di}'' in dinatural stands for \emph{diagonal}: a dinatural is a family of maps $\alpha_x : F(x,x) \to G(x,x)$ which is required to be given only on the \emph{diagonal} of $F,G$ by equating the contravariant and covariant variables with the same value $x:\C$. Such family of maps is required to satisfy a certain equational property, which generalizes the usual square of natural transformations.

Famously, however, dinatural transformations \emph{do not always compose}: a well-known sufficient condition for the composability of dinaturals is the absence of loops in a suitably associated graph~\cite{Eilenberg1966generalization,McCusker2021composing}. This loop-freeness similarly arises in linear logic with the Danos-Regnier criterion \cite{Guerrini2004proof,Blute1996linear,Blute1998shuffle,Blute1993linear}, and more in general in logic where composition corresponds to cut elimination~\cite{Petric2003g,Girard1992normal}.

There is a particularly deep connection between dinaturality and parametricity in programming languages~\cite{Pistone2017dinaturality,Pistone2019completeness,Plotkin1993logic,Voigtlaender2020free} and realizable models for System F~\cite{Bainbridge1990functorial,Freyd1992dinaturality} where all dinaturals compose.
Dinaturality has remained somewhat of an understudied subject, partly because this lack of general compositionality has proven to be particularly hard to explain in full generality~\cite{Santamaria2019towards}: an in-depth review on dinaturality and its importance for computer science can be found in~\cite{Santamaria2019towards},~\cite[Sec. 3]{Scott2000some}.

\subsection{Contribution}

In this work, we connect for the first time dinatural transformations to directed type theory, showing how they turn out to be the key technical notion needed to capture directed type theory in an elementary and straightforward way.

Our general approach to directed type theory is to take the simplicity of the groupoid model of Hofmann and Streicher~\cite{Hofmann1998groupoid} and generalize it to the directed case with a first-order (yet expressive) system aimed at capturing two specific aspects of directed type theory: first, the ability to construct and prove properties about theorems of directed equality by following precisely the same steps as in \MLTT; second, the ability to exploit the power of (co)ends-as-quantifiers~\cite{Loregian2021coend} to give simple and concise logical proofs of well-known theorems in category theory.

We summarize the main contributions and technical aspects of this paper:
\begin{enumerate}[leftmargin=*,itemsep=0.5em]
\item {\emph{Setting.}} We introduce a first-order (non-dependent) directed type theory where types are semantically interpreted as (small) 1-categories, terms as functors, predicates as dipresheaves (i.e. functors $\C^{\op} \times \C \> \Set$), directed equality predicates as $\hom$-functors, and proof-relevant entailments as dinatural transformations which are not required to compose in the usual sense.
\item {\emph{First-order type theory.}} Our directed type theory builds on the well-known canonical setting of first-order logic, with judgments structured in a similar way~\cite[4.1]{Jacobs1999categorical}: we have simply-typed types and terms, on which we build a proof-relevant logic with predicates, entailments, and \emph{equality of entailments}. This last aspect is typically absent in usual accounts of first-order logic, but it is crucial in our presentation because it is precisely the point in which we use dinaturality.

Our system is a \emph{type theory} in the sense of Jacobs~\cite[p.~9,~(iii)]{Jacobs1999categorical}: proofs have an explicit computational content, e.g., the proof of transitivity of directed equality is a bona-fide family of functions that can be used to compose equality witnesses (i.e., morphisms) in the type theory.
\item {\emph{Directed equality elimination.}} In our 1-categorical setting, the rules for directed equality are straightforward: the directed equality introduction rule is essentially the same as the usual $\refl$, which we validate using identities in $\hom$-sets. We identify a directed equality elimination rule which is again syntactically reminiscent of the $\J$-rule, but equipped with a syntactic restriction that does not allow for symmetry to be derived. This syntactic restriction is not ad-hoc, but it is justified by a precise semantic fact: the connection between dinaturality and ordinary naturality. In short, the syntactic requirement to contract a directed equality in context $\hom_\C(x,y)$ for $x:\Cop,y:\C$ is that both $x$ and $y$ must appear only covariantly (i.e., with the ``correct polarity'') in the conclusion and only contravariantly (i.e., with the ``wrong polarity'') in the assumptions in context. The non-derivability of symmetry, aside from the syntactic restriction of $J$, follows by soundness and the existence of a countermodel. %
\item {\emph{Directed theorems.}} The rules for directed equality allow us to recover in \Cref{sec:examples_directed_equality} the same type-theoretic definitions about symmetric equality derivable in standard \MLTT, {except for symmetry}: e.g., transitivity of directed equality (composition in a category), congruences of terms along directed equalities (the action of a functor on morphisms), transport along directed equalities (i.e., the coYoneda lemma).
\item {\emph{Directed properties.}} In our type theory one can also prove \emph{properties} of these maps using a \emph{dependent} version of directed $\J$ specific to the judgment of equality of entailments: for example, one can show that the composition of directed equalities is automatically associative and unital on both sides (one of the two sides is definitionally unital on the equality that is being contracted). The semantic notion of dinaturality is not used to \emph{construct} such maps (functoriality suffices), but to validate this {dependent} directed $\J$ rule.
With this rule one can internally prove that functoriality and naturality follow ``for free'', again, by a simple directed equality contraction.
\item {\emph{Polarity.}} Our type theory is equipped with a precise notion of polarity and variance which is used to implement the syntactic restriction behind the $\J$ rule. Even in our non-dependent case the treatment of variables is non-trivial, since dinaturality requires a precise definition of variance/polarity that differs from the approaches described in other works~\cite{North2019towards,Altenkirch2024synthetic,Nuyts2015towards,Gratzer2024directed}.
\item  {\emph{Category theory, logically.}} Our type theory allows us to give direct, concise, and distinctly logical proofs of well-known (yet non-trivial) theorems in category theory by using $\hom$ as a directed equality: e.g., the (co)Yoneda lemma, Kan extensions computed via (co)ends are adjoint to precomposition, presheaves form a closed category, $\hom$-functors preserve (co)limits, and the Fubini rules; each of these follows by modularly using the logical properties of each connective.
\item {\emph{(Co)end calculus.}}  The approach used to prove these theorems is to combine the perspective of $\hom$ as directed equality with the ideas of ``(co)end calculus''~\cite{Loregian2021coend}, viewing (co)ends as the \emph{directed quantifiers} of directed type theory. (Co)end calculus as treated in \cite{Loregian2021coend} uses various semantic properties of (co)ends, which are however selected \emph{ad-hoc} and not modularly organized in a precise set of rules: our type theory gives a formal treatment to these techniques, approaching proofs in a different and more logical fashion. The choice of a first-order (hence non-dependent) type theory is to capture (co)end calculus, which is typically first-order in practical applications.
\item {\emph{Yoneda technique.}} Our proofs are logical, yet mirror the way that (co)end calculus is used in practice (e.g.,~\cite{Boisseau2018what,Hinze2012kan,Roman2020open}), i.e., via a ``Yoneda-like'' series of \emph{natural} isomorphisms of sets: to prove that two objects $A, B\!:\!\C$ are isomorphic, one assumes a generic object $\Phi$ and then applies a series of isomorphisms of sets \emph{natural} in $\Phi$ to establish that $\C(\Phi,A) \iso \C(\Phi,B)$, from which $A \iso B$ follows by the fully faithfulness of the Yoneda embedding~\cite{Boisseau2018what,Leinster2014basic}. The same technique can be used to show adjunctions, and that \emph{functors} are naturally isomorphic.  %
\item {\emph{Adjoint-form rules.}} In typical syntactic presentations of type theory, rules for connectives are formulated to make cut admissible~\cite{Hofmann1997syntax,Shulman2016categorical}. In our case, we cannot have in the semantics that all entailments (i.e. dinaturals) compose, and therefore our rules must be stated in such a way that cut is not admissible. In his seminal paper~\cite{Lawvere1969adjointness}, Lawvere introduced the categorical understanding of logic by viewing quantifiers/connectives as adjoints: we formulate (some of) the rules of our type theory with dinaturals precisely in Lawvere's ``adjoint-form'' (e.g.~\cite[4.1.7, 4.1.8]{Jacobs1999categorical}), i.e., as natural bijections between entailments. In standard accounts of logic this adjoint-form is equivalent to the usual intro/elim.\@ rules for connectives, but only in the presence of \emph{cut}; the key observation is that, despite the absence of a general cut rule, the rules for quantifiers/connectives in adjoint-form \emph{can} be validated in our semantics with dinaturals. %
\item {\emph{(Co)ends-as-quantifiers.}} The rules for ends and coends are reminiscent of the quantifiers-as-adjoints paradigm by Lawvere~\cite{Lawvere1969adjointness}, which we captured as ``right and left adjoint'' operations to weakening~\cite[1.9.1]{Jacobs1999categorical}. This adjointness relation should be only interpreted suggestively, since (co)ends are functorial operations for naturals but in general not dinaturals~\cite[1.1.7]{Loregian2021coend}. Our approach has the advantage that several properties of quantifiers, e.g., that they can be exchanged and permuted, follow automatically from certain \emph{structural properties of contexts}. For example, in first-order logic the formulas $\forall x.\forall y.P \Leftrightarrow  \forall y.\forall x.P \Leftrightarrow \forall (x,y). P$ are logically equivalent for any predicate $P$: this is indeed also verified for ends (and coends with existentials), and takes the name of ``Fubini rule''~\cite[IX.8]{MacLane1998categories},~\cite[1.3.1]{Loregian2021coend}, which we prove in \Cref{ex:fubini}. More details on (co)ends and their calculus can be found in~\cite[IX.5-6]{MacLane1998categories},~\cite[Ch. 1]{Loregian2021coend}.
\item {\emph{Dinaturality.}} Dinatural transformations do not compose in general~\cite{Santamaria2019towards}: this lack of general composition turns out not to be a problem in practice, since they \emph{do} compose in all examples of interest. In such cases, dinaturals compose essentially because they compose with other \emph{natural} transformations~\cite{Dubuc1970dinatural}, and we capture this in our system by providing two \emph{restricted} cut rules.
\end{enumerate}%
\noindent Because of the lack of general compositionality, we do not consider a categorical semantics of our type theory using standard categorical models, e.g., fibrations~\cite{Jacobs1999categorical} or categories with families~\cite{Castellan2020categories}, since they all ask for full composition, which cannot be guaranteed in our semantics. %
Hence, our approach is to simply consider the main rules described in \Cref{fig:syntax:entailments} (which have \emph{restricted} rules for composition of entailments) and prove soundness w.r.t. the category model with dinaturals.

We formalize the soundness theorems given in this paper about dinaturality using the Agda proof assistant and the \href{https://github.com/agda/agda-categories}{\texttt{agda-categories}} library. Whenever present, the symbol \agda\ links to formal proofs, reporting here just the core idea. The full Agda formalization is accessible at \cite{Laretto2025artifact} and \href{https://github.com/iwilare/dinaturality}{\texttt{https://github.com/iwilare/dinaturality}}. We refer to \cite{Laretto2025di} for the Appendix of this paper.

\subsection{Related Work}

Directed type theory has been approached in several (mutually incompatible) ways, with different methodological choices regarding semantics and rules for directed equality, but without ever investigating the connection to dinaturality.

\noindent \textbf{Directed type theory with groupoids.}
North~\cite{North2019towards}, Altenrkirch and Neumann~\cite{Altenkirch2024synthetic} describe a dependent directed type theory with semantics in the category of (small) categories $\Cat$, but using
groupoidal structure to deal with the problem of variance in both introduction and elimination rules for directed equality. This line of research has been recently expanded in~\cite{Chu2024directed,Chu2025dependent} by extending judgments with variance annotations.

We focus on non-dependent semantics, and avoid groupoids by tackling the variance issue with dinatural transformations; using dinaturality and (co)ends-as-quantifiers allow us to capture naturality for free and characterize natural transformations inside of the type theory.

\noindent \textbf{Directed type theory, judgmental models.} Another approach to modeling directed equality is at the judgmental level. This line of research started with Licata and Harper~\cite{Licata20112} who introduced a directed type theory with a model in $\Cat$. Since directed equality is treated judgmentally, there are no rules governing its behavior in terms of elimination and introduction principles, although variances are similarly used in context as in our approach. Ahrens et al.~\cite{Ahrens2023bicategorical} similarly identify a type theory with judgmental directed equalities and semantics in comprehension bicategories, and extensively compare previous works on both judgmental and propositional directed type theories.

\noindent \textbf{Logics for category theory.} New and Licata~\cite{New2023formal} give a sound and complete presentation for the internal language of (hyperdoctrines of) certain virtual equipments. These models capture enriched, internal, and fibered categories, and have an intrinsically directed flavor. In these contexts, the type theory can give synthetic proofs of Fubini, Yoneda, and Kan extensions as adjoints. This generality however comes at the cost of a non-standard syntactic structure of the logic, especially when compared to standard \MLTT. Directed equality elimination takes the shape of the (horizontal) identity laws axiomatized in virtual equipments~\cite{Cruttwell2010unified}, which in the $\Prof$ model is essentially the coYoneda lemma. Their quantifiers are given by tensors and (left/right) internal homs, which in $\Prof$ correspond to certain restricted (co)ends which always come combined with the tensors and internal homs of $\Set$.

Our work is similar in spirit in that we provide a formal setting for proving category theoretical theorems using logical methods; we only focus on the elementary 1-categorical model of categories and do not yet capture enriched and internal settings.
However, we treat (co)ends as quantifiers \emph{directly}, viewing them as operations which act on the variables of the context, without the need for them to include any conjunction or implication. Our rules for directed equality are more direct and reminiscent of standard Martin-L\"of type theory, and specifically focus on the semantic justification of dinaturality. Since we consider less general models, our contexts do not have any linear nor ordered restriction and the same variable can appear multiple times both in equalities and contexts: for example, this allows us to \emph{write down} the statement of symmetry (without being able to prove it), and to consider profunctors of arbitrary variables, as typically needed in (co)end calculus.

\noindent \textbf{Geometric models of directed type theory.} Riehl and Shulman~\cite{Riehl2017type} introduce a simplicial type theory for synthetic $(\infty,1)$-categories. A directed interval type is axiomatized in a style reminiscent of cubical type theory~\cite{Cohen2015cubical}, which allows a form of (dependent) Yoneda lemma to be derived using such identity type. This type theory has been implemented in practice in the Rzk proof assistant~\cite{Kudasov2024formalizing}. On this line of research, Weaver and Licata~\cite{Weaver2020constructive} present a \emph{bicubical} type theory with a directed interval and investigate a directed analog of the univalence axiom; the same objectives were recently advanced in Gratzer et al.~\cite{Gratzer2024directed,Gratzer2025yoneda} with triangulated type theory and modalities.

In comparison with the above works, we do not explore the geometrical interpretation of directedness and focus on ``algebraic'' 1-categorical semantics; moreover, our treatment of directed equality is done intrinsically with elimination rules as in \MLTT rather than with synthetic intervals, with semantics directly provided by $\hom$-functors.

\noindent \textbf{Coend calculus, formally.} Caccamo and Winskel~\cite{Caccamo2001higher} give a formal system in which one can work with coends and establish non-trivial theorems with a few syntactical rules. The flavor is explicitly that of an axiomatic system, and does not take inspiration from type-theoretic rules: for instance, presheaves are \emph{postulated} to be equivalent under the swapping of quantifiers (Fubini), so this principle is not derived from structural rules as typically done in a logical presentation. %

\subsection{Structure of the Paper}

We start in \Cref{sec:syntax_and_semantics} by describing syntax and judgmental structure of the type theory, and give examples of directed type theory in \Cref{sec:examples_directed_equality}. We recall notions about dinaturals in \Cref{sec:dinaturality}, which we then use for the semantics in \Cref{sec:semantics}. We then apply our type theory to give logical proofs of theorems in category theory in \Cref{sec:examples_coend_calculus}, concluding in \Cref{sec:conclusion} with future works.

\section{Syntax}
\label{sec:syntax_and_semantics}

We introduce the main syntactic judgments of our proof-relevant first-order directed type theory, for which we describe the main ideas and notation in \Cref{sec:notation,sec:rules}.

Our type theory is structured in a similar way to first-order logic~\cite[4.1]{Jacobs1999categorical}, with judgments for types and terms (i.e., sorts and function symbols), and predicates indexed by a term context.

We will omit several uninteresting equality judgments for contexts, terms, propositional contexts, as well as usual congruence and equivalence rules. We list here the main judgments of our type theory alongside a brief description of their semantics to aid intuition, with details in \Cref{sec:semantics}.

\vspace{0.6em}

\noindent
\ \textbf{\Cref{fig:syntax:types}:}
$\left\{
\begin{minipage}{0.86\linewidth}
\begin{itemize}[leftmargin=10pt]
    \item \fbox{$\C\type$}~\textbf{types} $\C,\D$ are interpreted in the semantics as small categories. Types can have $-^\op$, and include the terminal $\top$, product $\C \times \D$, and functor categories $[\C,\D]$.
    \item \fbox{$\C = \D$}~\textbf{judgmental equality of types}, interpreted as isomorphisms of categories; we use this to simplify $({\C^\op})^\op = {\C}$ and propagate the $\op$ inside types.
\end{itemize}
\end{minipage}
\right.$

\noindent
\ \textbf{\Cref{fig:syntax:terms}:}
$\left\{
\begin{minipage}{0.85\linewidth}
\begin{itemize}[leftmargin=*]
    \item \fbox{$\Gamma \ctx$}~\textbf{contexts} $\Gamma,\Delta$ are finite lists of categories, interpreted as \emph{products in $\Cat$s};
    \item \fbox{$\Gamma \ni x:\C$}~\textbf{variable in context}, which captures the de Bruijn indices of variables in context $\Gamma$; for us variable names are irrelevant, and we always identify variables with these judgments. Semantically, these are the projections out of $\sem{\Gamma}$.
    \item \fbox{$\Gamma \vdash F : \C$}~\textbf{terms} $F,G$ as \emph{functors} $\sem{\Gamma}\>\sem{\C}$, which are similar to terms in STLC;
\end{itemize}
\end{minipage}
\right.$

\noindent
\ \textbf{\Cref{fig:syntax:predicates}:}
$\left\{
\begin{minipage}{0.85\linewidth}
\begin{itemize}[leftmargin=*]
    \item \fbox{$[\Gamma]\ P \prop$}~\textbf{predicates} $P,Q$ as \emph{dipresheaves}, i.e., functors $\sem P\!:\!\sem{\Gamma}^\op\x\sem{\Gamma}\!\>\!\Set$;
    \item \fbox{$[\Gamma]~\Phi \propctx$}~\textbf{propositional contexts} $\Phi,\Phi'$ are finite lists of predicates, which we interpret via the \emph{pointwise product of dipresheaves in $\Set$};
\end{itemize}
\end{minipage}
\right.$

\noindent
\ \textbf{\Cref{fig:syntax:entailments}:}
$\left\{
\begin{minipage}{0.85\linewidth}
\begin{itemize}[leftmargin=*]
    \item \fbox{$[\Gamma]~\Phi \vdash \alpha : P$}~\textbf{entailments} $\alpha,\beta,\gamma$ are interpreted semantically as \emph{dinatural transformations} $\sem{\Phi}~\dinat \sem{P}$; we axiomatize composition/cut only with \emph{natural} transformations, not requiring general composition;
    \item \fbox{$[\Gamma]~\Phi \vdash \alpha = \beta : P$}~\textbf{equality of entailments}, i.e. \emph{equality of dinaturals in $\Set$}.
\end{itemize}
\end{minipage}
\right.$

\vspace{0.7em}

For predicates we consider the following logical connectives, which we denote syntactically with the same symbol later used in the semantics:
\begin{itemize}[leftmargin=*]
    \item \textbf{conjunction} $\Product$, interpreted as the pointwise product of dipresheaves in $\Set$;
    \item \textbf{polarized implication} $\Implication$, by postcomposing dipresheaves with $\hom_\Set : \Set^\op\x\Set\>\Set$;
    \item \textbf{propositional directed equality} $\hom_\C$ is interpreted by \emph{$\hom$-functors} $: \Cop\x\C\>\Set$;
    \item \textbf{universal and existential quantifiers} $\Endf{x:\C} P(\nxx)$, $\Coendf{x:\C}\!P(\nxx)$ are given by \emph{ends} and \emph{coends}. %
\end{itemize}
The judgments for types, terms, propositions and entailments are given in \Cref{fig:syntax:types,fig:syntax:terms,fig:syntax:predicates,fig:syntax:entailments}.

Our directed type theory is equipped with an equational theory for entailments, which we describe the key features of in \Cref{sec:rules} without spelling it out in detail. The most important cases are given in \Cref{fig:syntax:entailments} for directed equality, with details in \CiteAppendix{A}{appendix:syntax} for cuts/adjoint rules.

\begin{figure}[H]
  \centering
  \captionsetup{justification=centering}
  \[
  \begin{array}{c}
  \fbox{$\C \type$}
  \quad \inference{C \in \Sigma_B}{C \type}
  \quad \inference{\C \type}{\C^\op \type}
  \quad \inferenceTwo{\C \type}{\D \type}{\C \times \D \type}
  \quad \inferenceTwo{\C \type}{\D \type}{[\C,\D] \type}
  \quad \inference{\phantom{\C\!\!\type}}{\top \type}
  \\[1.5em]
  \fbox{$\C = \D$}
  \quad \inferenceZero{(\C^\op)^\op = \C}
  \quad \inferenceZero{(\C \times \D)^\op = \C^\op \times \D^\op}
  \quad \inferenceZero{[\C,\D]^\op = [\C^\op,\D^\op]}
  \quad \inferenceZero{\top^\op = \top}
  \quad \cdots
  \end{array}
  \]
  \vspace{-0.8em}
  \caption{Syntax of first-order dinatural directed type theory -- types  and judgmental equality.}
  \label{fig:syntax:types}
  \centering
  \vspace{0.9em}
  \begin{adju}[1.0]\hspace{-1.8em}$\begin{array}{c}
  \fbox{$\Gamma \ctx$}
  \quad \inference{\phantom{\C\!\!\type}}{[] \ctx}
  \quad \inferenceTwo{\Gamma \ctx}{\C \type}{\Gamma,\C \ctx}
  \quad \inference{\Gamma \ctx}{\Gamma^\op \ctx}
  \\[1.5em]
  \quad \fbox{$\Gamma = \Gamma'$}
  \quad \inferenceZero{[]^\op = []}
  \quad \inferenceZero{(\Gamma,\C)^\op = \Gamma^\op,\C^\op}
  \quad \inferenceTwo{\C = \C'}{\Gamma = \Gamma'}{\Gamma,\C = \Gamma',\C'}
  \\[1.5em]
  \fbox{$\Gamma \ni x:\C$}
  \quad \inference{\phantom{\phantom{\C\!\!\type}}}{\Gamma,x:\C \ni x:\C}
  \quad \inference{\Gamma \ni x:\C}{\Gamma,y:\D \ni x:\C}
  \\[1.5em]
  \fbox{$\Gamma \vdash t : \C$}
  \quad \inference{\Gamma \ni x:\C}{\Gamma \vdash x:\C}
  \quad \inference{\Gamma \vdash t : \C}{\Gamma^\op \vdash t^\op : \C^\op}
  \quad \inferenceTwo{f \in \Sigma_T}{\Gamma \vdash t : \textsf{dom}(f)}{\Gamma \vdash f(t) : \textsf{cod}(f)}
  \\[1.5em]\quad \inference{\phantom{\Gamma\!\!\!\vdash s : \C}}{\Gamma \vdash\ ! : \top}
  \quad \inferenceTwo{\vphantom{p}\Gamma \vdash s : \C}{\Gamma \vdash t : \D}{\Gamma \vdash \ang{s,t} : \C \times \D}
  \quad \inference{\Gamma \vdash p : \C \times \D}{\Gamma \vdash \pi_1(p) : \C}
  \quad \inference{\Gamma \vdash p : \C \times \D}{\Gamma \vdash \pi_2(p) : \D}
  \\[1.5em]
  \quad \inferenceTwo{\Gamma \vdash s : [\C,\D]}{\Gamma \vdash t : \C}{\Gamma \vdash s \cdot t : \D}
  \quad \inference{\Gamma, x:\C \vdash t(x) : \D}{\Gamma \vdash \lambda x.t(x) : [\C,\D]}
  \\[1.5em]
  \fbox{$\Gamma \vdash t = t' : \C$}
  \quad \inferenceTwo{\Gamma, x:\C \vdash f(x) : \D}{\Gamma \vdash t : \C}{\Gamma \vdash (\lambda x.f(x)) \cdot t = f[x \mapsto t] : \D}
  \quad \inference{\Gamma, x:\C \vdash f(x) : \D}{\Gamma, x:\C \vdash (\lambda x.f(x)) \cdot x = f(x) : \D}
  \\[1.5em]
  \quad \inference{\Gamma \vdash p : \C \times \D}{\Gamma \vdash \ang{\pi_1(p),\pi_2(p)} = p : \C \times \D}
  \quad \inference{\Gamma \vdash t : \top}{\Gamma \vdash t =\ ! : \top}
  \quad \inferenceTwo{\Gamma \vdash s : \C}{\Gamma \vdash t : \D}{\Gamma \vdash \pi_1(\ang{s,t}) = s : \C}
  \quad \inferenceTwo{\Gamma \vdash s : \C}{\Gamma \vdash t : \D}{\Gamma \vdash \pi_2(\ang{s,t}) = t : \D}
  \\[1.5em]
  \nowidth[c]{
  \quad
  \quad
  \inference{\Gamma \vdash t : \C}{\Gamma \vdash (t^\op)^\op = t : \C}
  \quad \inferenceTwo{\Gamma \vdash s : \C}{\Gamma \vdash t : \D}{\Gamma^\op \vdash \langle s, t \rangle^\op = \langle s^\op , t^\op \rangle : \C^\op \times \D^\op}
  \quad \inference{\Gamma^\op,x:\C \vdash t : \D}{\Gamma \vdash (\lambda x. t(x))^\op = \lambda x.t^\op(x) : [\C^\op,\D^\op]}
  }
  \end{array}$
  \end{adju}
  \caption{Syntax of first-order dinatural directed type theory -- contexts, variables, terms and their equality.}
  \label{fig:syntax:terms}
  \[
  \begin{array}{c}
\fbox{$[\Gamma]\ P \prop$}
\hspace{1em}
\inferenceTwo{[\Gamma]\ P \prop}{[\Gamma]\ Q \prop}{[\Gamma]\ P \times Q \prop}
\quad \inferenceTwo{[\Gamma^\op]\ P \prop}{[\Gamma]\ Q \prop}{[\Gamma]\ P \Rightarrow Q \prop}
\quad \inference{}{[\Gamma]~\top \prop}
\\[1.5em]
\quad \inferenceTwo{\vphantom{\Sigma_P}\Gamma^\op,\Gamma \vdash s : \Cop}{\Gamma^\op,\Gamma \vdash t : \C}{[\Gamma]\ {\hom}_\C(s, t) \prop}
\quad \inferenceThree{P \in \Sigma_P}{\vphantom{\Sigma_P}\Gamma^\op,\Gamma \vdash s : \textsf{neg}(P)^\op}{\Gamma^\op,\Gamma \vdash t : \textsf{pos}(P)}{[\Gamma]\ {P(s \mid t) \prop}}
\\[1.5em]
\quad \inference{[\Gamma, x:\C]\ P(\n x,x) \prop}{[\Gamma]~\Endf{x:\C} P(\n x,x) \prop}
\quad \inference{[\Gamma, x:\C]\ P(\n x,x) \prop}{[\Gamma]~\Coendf{x:\C} P(\n x,x) \prop}
\\[1.5em]
\fbox{$\Phi \propctx$} \quad \inferenceZero{\emptyctx \propctx} \quad \inferenceTwo{\Phi \propctx}{P \prop}{P, \Phi \propctx}
\hspace{1em}
\end{array}
\]
\vspace{-0.4em}
\Description{Main rules for the syntax of first-order dinatural directed type theory: types and their equality; contexts, variables, terms and their equality; predicates and propositional contexts.}
\caption{Syntax of first-order dinatural directed type theory -- predicates and propositional contexts.}
\label{fig:syntax:predicates}
\end{figure}

\begin{figure}[H]
  \[
  \begin{array}{c}
\fbox{$\Gamma \ni x:\A \mathsf{\ cov\ in\,}P$}
\\[1em]
\inferenceTwo{\Gamma \ni x:\A \mathsf{\ cov\ in\,}P}{\Gamma \ni x:\A \mathsf{\ cov\ in\,}Q}{\Gamma \ni x:\A \mathsf{\ cov\ in\,}P \times Q}
\quad \inferenceTwo{\Gamma^\op \ni x:\A^\op \mathsf{\ cov\ in\,} P}{\Gamma \ni x:\A \mathsf{\ cov\ in\,}Q}{\Gamma \ni x:\A \mathsf{\ cov\ in\,}P \Rightarrow Q}
\\[1em]
\quad \inferenceTwo{\Gamma^\textsf{op},\Gamma \ni \n x:\Aop \mathsf{\ unused\ in\,} s : \Cop}{\Gamma^\textsf{op},\Gamma \ni \n x:\A^\textsf{op} \mathsf{\ unused\ in\,} t : \C}{\Gamma \ni x:\A \mathsf{\ cov\ in\,}\hom_\C(s, t)}
\\[1em]
\inferenceTwo{\Gamma^\textsf{op},\Gamma \ni \n x:\Aop \mathsf{\ unused\ in\,} s : \textsf{neg}(P)^\op}{\Gamma^\textsf{op},\Gamma \ni \n x:\A^\textsf{op} \mathsf{\ unused\ in\,} t : \textsf{pos}(P)}{\Gamma \ni x:\A \mathsf{\ cov\ in\,}P(s \mid t)}
\\[1em]
\fbox{$\Gamma \ni x:\A \mathsf{\ contra\ in\,}P$}
\quad
\inference{\Gamma^\op \ni x:\A^\op \mathsf{\ cov\ in\,} P^\op}{\Gamma \ni x:\A \mathsf{\ contra\ in\,} P}
\\[1em]
\fbox{$\Gamma \ni x:\A \mathsf{\ unused\ in\,}t:\C$}
\quad \inferenceTwo{\Gamma \ni x : \C}{x \not = y}{\Gamma \ni y:\C \mathsf{\ unused\ in\,} x:\C}
\quad \inference{\Gamma,z:\C \ni x:\A \mathsf{\ unused\ in\,} t(z) : \D}{\Gamma \ni x:\A \mathsf{\ unused\ in\,} \lambda z.t(z) : [\C,\D]}
\\[1em]
\quad \inference{\Gamma \ni x:\A \mathsf{\ unused\ in\,} t : \dom(f)}{\Gamma \ni x:\A \mathsf{\ unused\ in\,} f(t) : \cod(f)}
\quad \inference{\Gamma \ni x:\A \mathsf{\ unused\ in\,} t : \C}{\Gamma^\op \ni x:\Aop \mathsf{\ unused\ in\,} t^\op : \C^\op}
\end{array}
\]
\caption{Syntax of first-order dinatural directed type theory -- syntactic conditions for covariant/contravariant variables in predicates. Full rules in \CiteAppendix{A}{appendix:syntax}.} %
\vspace{-0.8em}
\label{fig:syntax:pos_neg_conditions_formulas}
\vspace{0.9em}%
\begin{adju}[1.0]%
\def\arraystretch{2.3}%
$\begin{array}{c}
    \hspace{-3.7em}
    {\fbox{$[\Gamma]~\Phi \vdash \alpha : P$}}
    \quad
    \begin{prooftree}
    \infer0[\Rulevar]
         {[\Gamma]~\Phi,a:P,\Phi' & \vdash a : P}
    \end{prooftree}
    \quad
    \begin{prooftree}
    \hypo{[\Gamma]~\Phi & \vdash \alpha : Q}
    \infer1[\Ruleweaken]{[\Gamma]\ P, \Phi & \vdash \textsf{wk}_P(\alpha) : Q}
    \end{prooftree}
    \quad
    \begin{prooftree}
    \infer0[\Ruletop]{[\Gamma]~\Phi & \vdash {!} : \top}
    \end{prooftree}
    \\[0.8em]
    \begin{prooftree}
    \hypo{\Gamma^\op,\Gamma & \vdash F : \C}
    \hypo{[x: \C,\Gamma]\ {\Phi(\nxx)} & \vdash \alpha : Q(\nxx) }
    \infer2[\Rulereindex]{[\Gamma]~\Phi(F(x,\n x),F(\n x,x)) & \vdash F^*(\alpha) : Q(F(x,\n x),F(\n x,x))}
    \end{prooftree}
    \quad
    \begin{prooftree}
    \hypo{[\Gamma]~P, P, \Phi & \vdash \alpha : Q}
    \infer1[\Rulecontract]{[\Gamma]\ P, \Phi & \vdash \textsf{contr}_P(\alpha) : Q}
    \end{prooftree}
    \\[0.8em]
    \begin{prooftree}
    \hypo{[\Gamma]~\Phi \vdash P \x Q }
    \infer[double]1[\Ruleprod]{[\Gamma]~\Phi \vdash P, \qquad [\Gamma]~\Phi \vdash Q}
    \end{prooftree}
    \quad
    \begin{prooftree}
    \hypo{[x:\Gamma]\ A(\overline x,x), \Phi(\overline x,x) & \vdash B(\overline x,x)}
    \infer[double]1[\Ruleexp]{[x:\Gamma]~\takespace{A(\overline x,x), \Phi(\overline x,x)}{\Phi(\overline x,x)} & \vdash A^\op(x,\overline x) \Rightarrow B(\overline x,x)}
    \end{prooftree}
    \\[0.8em]
    \begin{prooftree}
    \infer[no rule]0{[a: \C, \Gamma]~\Phi & \vdash P(\naa)}
    \infer[double]1[\Ruleend]{\takespace{[a: \C, \Gamma]}{[\Gamma]}\ {\Phi} & \vdash \Endf{a:\C} P(\naa)}
    \end{prooftree}
    \qquad
    \begin{prooftree}
    \infer[no rule]0{[\Gamma]\ {\left(\Coendf{a:\C} P(\naa)\right)\!, \Phi} & \vdash Q}
    \infer[double]1[\Rulecoendfrobenius]{[a: \C, \Gamma]\ {P(\naa), \Phi} & \vdash Q}
    \end{prooftree}
    \\[1.9em]
    \nowidth[c]{
    \begin{prooftree}
    \hypo{\Gamma \textsf{\ unused in } P}
    \infer[no rule]1{[a\!:\!\Delta^\op,b\!:\!\Delta]~\Phi(a,b) & \vdash \takespace[l]{\gamma[\alpha]}{\alpha}\!:\!P(a,b)}
    \infer[no rule]1{[z\!:\!\Delta]\ k\!:\!\takespace{P(a,b)}{P(\nzz)},\takespace[l]{\Phi(a,b)}{\Phi(\nzz)} & \vdash \takespace[l]{\gamma[\alpha]}{\gamma[k]}\!:\!Q(\nzz)}
    \infer1[\Rulecutdin]{[z\!:\!\Delta]~\takespace[l]{\Phi(a,b)}{\Phi(\nzz)} & \vdash \gamma[\alpha]\!:\!Q(\nzz)}
    \end{prooftree}
    \quad
    \begin{prooftree}
    \hypo{\Gamma \textsf{\ unused in } P}
    \infer[no rule]1{[z\!:\!\Delta]~\Phi(\nzz) & \vdash \takespace[l]{\gamma[\alpha]}{\gamma}\!:\!P(\nzz)}
    \infer[no rule]1{[a\!:\!\Delta^\op,b\!:\!\Delta]\ k\!:\!P(a,b),\takespace[l]{\Phi(\n a,\n b)}{\Phi(\n a,\n b)} & \vdash \takespace[l]{\gamma[\alpha]}{\alpha[k]}\!:\!Q(a,b)}
    \infer1[\Rulecutnat]{[z\!:\!\Delta]~\takespace[l]{\Phi(a,b)}{\Phi(\nzz)} & \vdash \alpha[\gamma]\!:\!Q(\nzz)}
    \end{prooftree}
    }
    \\[1.9em]
    \begin{prooftree}
    \infer0[\Rulerefl]{[x: \C,\Gamma]~\Phi & \vdash \refl_\C : \hom_\C(\nxx)}
    \end{prooftree}
    \quad
    \begin{prooftree}
    \hypo{[z : \C, \Gamma]~\takespace{e:\hom_\C(a,b), \Phi(\overline b, \overline a)}{\Phi(\nzz)}~\vdash & h : P(\nzz)}
    \infer1[\RuleJ]{[a : \Cop, b : \C, \Gamma]\ e:\hom_\C(a,b), \Phi(\overline b, \overline a) \vdash & \J(h)[e] : P(a,b)}
    \end{prooftree}
    \\[1.1em]
    \hspace{-4.1em}{\fbox{$[\Gamma]~\Phi \vdash \alpha = \beta : P$}}
    \quad
    \begin{prooftree}
    \infer0[\RuleJcomp]{[z:\C,\Gamma]\ k:\Phi(\nzz) \vdash \J(h)[\refl_\C] = h : P(\nzz)}
    \end{prooftree}
    \\[0.7em]
    \begin{prooftree}
    \infer[no rule]0{[z:\C,\Gamma]~\Phi(\nzz) \vdash \alpha[\refl_\C] = \beta[\refl_\C] : P(\nzz)}
    \infer1[\RuleJeq]{[a:\Cop,b:\C,\Gamma]\ e:\hom_\C(a,b),\Phi(\overline b,\overline a) \vdash \alpha[e] = \beta[e] : P(a,b)}
    \end{prooftree}
\end{array}$
\end{adju}
\LinkRulereindex
\LinkRuleexp
\LinkRuleexpinv
\LinkRuleweaken
\LinkRuleprod
\LinkRulevar
\LinkRulecontract
\LinkRulerefl
\LinkRulecoendfrobenius
\LinkRuleop
\LinkRuleend
\LinkRulecoend
\LinkRulecoendinv
\LinkRuletop
\LinkRulecutnat
\LinkRulecutdin
\LinkRuleJ
\LinkRuleJcomp
\LinkRuleJeq
\vspace{-0.4em}
\Description{Main rules for the syntax of first-order dinatural directed type theory: rules for entailments and judgmental equality.}
\caption{Syntax of first-order dinatural directed type theory -- entailments and judgmental equality.}
\label{fig:syntax:entailments}
\end{figure}

The rules for entailments implicitly use the notion of variance for variables, described in \Cref{notation:nat}. Variance is captured formally in \Cref{fig:syntax:pos_neg_conditions_formulas} by the following judgments, all of which presuppose $\Gamma \ni x:\A$ for a variable $x$ of type $\A$ in context $\Gamma$:

\vspace{0.6em}

\noindent
\ \textbf{\Cref{fig:syntax:pos_neg_conditions_formulas}:}
$\left\{
\begin{minipage}{0.85\linewidth}
\begin{itemize}[leftmargin=*]
    \item \fbox{$\Gamma \ni x:\A \mathsf{\ unused\ in\,}t:\C$} for $x\!:\!\A$ does not syntactically appear in a term\nowidth{\ t\text{.}}
    \item ${\fbox{$\Gamma \ni x:\A \mathsf{\ cov\ in\,}P$}}$ states that $x\!:\!\A$ is \emph{covariant} in the predicate $[\Gamma]\ P$.
    \item \fbox{$\Gamma \ni x:\A \mathsf{\ contra\ in\,}P$} states that $x\!:\!\A$ is \emph{contravariant} in the predicate \nowidth{[\Gamma]\ P\text{.}}
\end{itemize}
\end{minipage}
\right.$

\vspace{0.7em}

To make the type theory non-trivial, our judgments are implicitly parameterized by a standard notion of signature $\Sigma := (\Sigma_B,\Sigma_T,\Sigma_P,\Sigma_A)$, i.e., sets of base type symbols, term symbols, predicate symbols, and base entailments respectively. Base predicates $P(s \mid t)$ for $P \in \Sigma_P$ are equipped with \emph{two} terms, a negative one $s : \textsf{neg}(P)^\op$ and a positive one $t : \textsf{pos}(P)$ typed in the same context $\Gamma^\op,\Gamma$. This choice is motivated by the fact that $\hom$ is similarly equipped with two sides. The judgments for equality of types are not extended by the signature. We omit the details of this extension.

\subsection{Polarity and Variance}
\label{sec:notation}

The main idea behind dinatural transformations is that variables in a predicate are allowed to be used irrespectively of the $\op$ in their type (or lack thereof). To give a taste for our type theory, we show what the statement and proof of transitivity of directed equality look like in our system:
\[
\begin{prooftree}
\infer0[\Rulevar]{[z:\C,c:\C]~\takespace{f:\hom(a,b),\, g:\hom(\n b,c)}{g : \hom(\n z, c)} & \vdash g : \hom(\n z,c)}
\infer1[\RuleJ]{[a:\Cop,b:\C,c:\C]\ {f:\hom(a,b),\, g:\hom(\n b,c)} & \vdash {J(g)} : \hom(a,c)}
\end{prooftree}
\]
Whenever a variable $b:\C$ is used with the ``wrong polarity'' we denote such use with $\n b:\Cop$, as in the above example. In order to make this intuition precise, we formally introduce the concepts of \emph{position}, \emph{polarity}, and \emph{variance} and their notation in the type theory. Variance is ultimately used to implement the syntactic restriction of directed equality elimination $\RuleJ$.

We use the term \emph{polarity of a type} to refer to the fact that types always come in pairs: whenever $\C$ is a type, its opposite $\C^\op$ is also a type. Polarity is a relative notion: we say the type $\C^\op$ is \emph{the negative} of $\C$ irrespectively of the fact that $\C$ itself might have an outermost syntactic $\op$.

\noindent Polarity is used in the syntax of the type theory in the following way:

\begin{itemize}
\item The $\op$ operation is also present in contexts, i.e., for a $\Gamma \ctx$ there is a \emph{negative context} $\Gamma^\op$ which is definitionally equal to the context obtained by adding $\op$ to each type.
\item In the formation rule for $[\Gamma]\ {\hom}_\C(s,t)$ in \Cref{fig:syntax:predicates}, the term $s$ is given return type $\C^\op$.
\item In the formation rule for $[\Gamma]\ P \Rightarrow Q$ in \Cref{fig:syntax:predicates}, the predicate $P$ is given type in $\Gamma^\op$.
\end{itemize}

\noindent The other crucial idea of our system is the above-mentioned fact that variables can appear at the same time irrespectively of their polarity. This is implemented by the following ideas:

\begin{itemize}
\item There are two cases where variables can appear in a predicate, namely the base cases $[\Gamma]\ {\hom}_\C(s,t)$ and $[\Gamma]\ P(s \mid t)$, where the two terms $s,t$ can use the variables from $\Gamma$.
\item The key idea is that both $s,t$ are not given type in $\Gamma$, but in the \emph{context concatenation} $\Gamma^\op,\Gamma$.
\item Intuitively, this allows for variables to be used in $s,t$ also in the ``wrong way'' (with respect to the original polarity of the context $\Gamma$ in which $P$ is given type).
\end{itemize}
\noindent We give a specific name to the terms of this shape in concatenated contexts $\Gamma^\op,\Gamma$, since they also play a crucial role in reindexing.
\begin{definition}
A \emph{diterm} is a term of the form $\Gamma^\op,\Gamma \vdash t : \C$ for some context $\Gamma$.
\end{definition}
\noindent We now capture the above intuitive ideas behind polarity and variance with precise terminology.
\begin{definition}[Positions in a predicate]
The name \emph{position} refers to a point in which a variable $x\!:\!\C$ can appear in a predicate, e.g., there are four possible positions $x,y,z,w$ for variables to appear in the predicate $\hom_\C(x,y) \x P(z,F(w))$.
\end{definition}

\begin{definition}[Variant use of a variable]
\label[definition]{def:variant_use}
For any predicate $[\Gamma]\ P$ and a position of type $\Cop$ in $P$, we say that a variable $\Gamma \ni x:\C$ (with no $\op$) is \emph{used contravariantly in that position} iff the variable used in that position is taken from the \emph{left} side $\Gamma^\op$ (in the context concatenation $\Gamma^\op,\Gamma$), i.e., with type $\n x:\Cop$. Accordingly, we will always denote variables taken from such left side of the context with an overbar $\n x$. Similarly, given a position of type $\C$ in $P$ we say that a variable $\Gamma \ni x:\C$ is \emph{used covariantly in that position} iff it is taken from the \emph{right} side $\Gamma$ (i.e. in the usual way), which we denote without any overbar.
\end{definition}

The notation $\n x$ is suggestive of the fact that $\n x:\C^\op$ and $x:\C$ will be identified with the same value when using dinatural transformations in the semantics of entailments.

\begin{example}[Derivation of a predicate]\label{example:predicate}
We provide an example derivation of a predicate in context combining the previously introduced ideas of co/contravariant variables, for a term $x:\C \vdash F(x) : \D$.
\[
\begin{prooftree}
\infer0{\n x\!:\!\C,\n y\!:\!\D,x\!:\!\C^\op,y\!:\!\D^\op \vdash y \!:\! \Dop}
\infer0{\n x\!:\!\C,\n y\!:\!\D,x\!:\!\C^\op,y\!:\!\D^\op \vdash \overline x : \C}
\infer1{\n x\!:\!\C,\n y\!:\!\D,x\!:\!\C^\op,y\!:\!\D^\op \vdash F(\overline x) : \D}
\infer2{[x\!:\!\C^\op,y\!:\!\D^\op]\ {\hom}_\D(y,F(\overline x)) \prop}
\hypo{\cdots\ \vdash x:\C}
\infer1{[x\!:\!\C,y\!:\!\D]\ P(x) \prop}
\infer2{[x\!:\!\C,y\!:\!\D]\ {\hom}_\D(y,F(\overline x)) \Rightarrow P(x) \prop}
\end{prooftree}
\]
\end{example}

\begin{definition}[Variance of a variable]
\label[definition]{def:variance}
Variables can occur in multiple positions at the same time: we say that a variable $\Gamma \ni x:\C$ is \emph{covariant} in a predicate $[\Gamma]\ P$ \emph{iff} it is \emph{always used covariantly} in the positions of $P$, i.e., it is always picked from the right side $\Gamma$ of the context $\Gamma^\op,\Gamma$ and is hence always used ``correctly'' with respect to $\Gamma$. Similarly, a variable $\Gamma \ni x:\C$ is said to be \emph{contravariant} in a predicate $[\Gamma]\ P$ when it is \emph{always used contravariantly} in the positions of $P$, i.e., it is always picked from the left side $\Gamma^\op$ of the context $\Gamma^\op,\Gamma$ and is hence always used ``in the wrong way'' with respect to $\Gamma$. A variable is said to be \emph{natural} when it is either covariant or contravariant, i.e., it is consistently used with the same variance. A variable is said to be \emph{dinatural} or \emph{mixed-variance} \emph{iff} it is neither covariant nor contravariant, i.e., it occurs at least once covariantly and at least once contravariantly in a predicate.
\end{definition}

\begin{example}[Variance]
In the predicate $[x\!:\!\Cop,y\!:\!\C]\ {\hom}_\C(x,y)$, both $x$ and $y$ are covariant.
In $[x\!:\!\C,y\!:\!\C,z\!:\!\C]\ {\hom}_\C(\n x,y) \times \hom_\C(\n y,z)$ the variable $x$ is contravariant, $y$ is dinatural, and $z$ is covariant. In $[x\!:\!\Cop,z\!:\!\Cop]\ {\hom}_\C(\n x,z) \Rightarrow \hom_\C(z,\n x)$, $x$ is contravariant and $z$ is covariant. Finally, for a term $\C^\op \vdash F : \D$ (i.e., a ``contravariant functor''), $x$ is covariant in $[x\!:\!\C]\ {\hom}_\D(F^\op(x),x)$.
\end{example}

The above definitions capture the way that natural and dinatural usage of variables is referred to in practice. Formally, variance of variables in predicates is captured using the judgments in \Cref{fig:syntax:pos_neg_conditions_formulas}. The actual implementation of variance is slightly different from the description above, but they are equivalent: the judgment $\Gamma \ni x:\A \mathsf{\ cov\ in\,} P$ is derivable, i.e., the variable $x$ is covariant, when \emph{its contravariant counterpart $\n x$ is not syntactically used anywhere in the predicate}. This last aspect is itself captured by a straightforward judgment, described in \Cref{fig:syntax:pos_neg_conditions_formulas}, which underapproximates syntactic unusedness of variables in terms. The well-formedness of these judgments occasionally relies on the fact that $\Gamma \ni x:\A$ implies that $\Gamma^\op \ni x:\A^\op$, and similarly $\Gamma^\op,\Gamma \ni x:\A$ and $\Gamma^\op,\Gamma \ni \n x:\Aop$ in the intuitive way.

\begin{example}[Variance, formally]\label{ex:variance_formally}
We give an example of a formal derivation for covariance using the predicate in \Cref{fig:syntax:pos_neg_conditions_formulas}, assuming for simplicity that the predicate $P$ does not have any variables:
\[
\begin{prooftree}
\infer0{[\n x\!:\!\C,\n y\!:\!\D,x\!:\!\Cop,y\!:\!\Dop] \ni \n y : \D \textsf{ unused in } y}
\infer0{[\cdots] \ni \n y : \D \textsf{ unused in } F(x)}
\infer2{[x\!:\!\Cop,y\!:\!\Dop] \ni y : \Dop \textsf{ cov in } {\hom}_\D(y,F(\overline x))}
\infer0{\cdots\vphantom{[x\!:\!\Cop,y\!:\!\Dop] \ni y : \Dop \textsf{ cov in } {\hom}_\D(y,F(\overline x))}}
\infer2{[x\!:\!\C,y\!:\!\D] \ni y : \D \textsf{ cov in } {\hom}_\D(y,F(\overline x)) \Rightarrow P}
\end{prooftree}
\]
\end{example}

\begin{remark}[Notation for variance in predicates]
We indicate with $[x:\C,y,\D,\Gamma]\ P(\nxx,\nyy)$ the fact that a predicate $P$ \emph{can} depend on $x,y$ both co- and contravariantly; we will often omit in $P$ the (unrestricted) presence of variables coming from a context $\Gamma$.
When either variance is omitted, e.g., as in $P(x,\n y)$, the predicate must depend \emph{only} on $x$ and $\n y$, i.e., $x$ is \emph{covariant} and $\n y$ is \emph{contravariant} in $P$. Variance for entire contexts is intuitively denoted as $[y:\Gamma]\ P(y)$, i.e., all variables in $\Gamma$ are covariant.

Formally, these restrictions are captured using the predicates for variance of \Cref{def:variance}. We use this convention in the rules for entailments of \Cref{fig:syntax:entailments}.
\end{remark}

There are many choices for the system of variances presented so far: the one presented here is a simple setup that closely matches the intuition for contravariance typically used in mathematics, denoting variables as contravariant precisely when one expects it as shown in \Cref{example:predicate}.

Mnemonically, \emph{p}ositions have \emph{p}olarity, and \emph{v}ariables have \emph{v}ariance. \emph{Co}variant variables are ``\emph{co}mpliant'' and they are used as they are told, while \emph{contra}variant variables are ``\emph{contra}rian'' and always reject well-typing laws.

For any predicate $[\Gamma]\ P$, there is an associated \emph{opposite predicate} $[\Gamma^\op]\ P^\op$, defined by induction on the derivation of $P$, obtained intuitively by inverting the variance of variables in each position: i.e., whenever $x$ was used in some position, $\n x$ is used instead, and vice versa. This operation is used in the rule for polarized implication $\Ruleexp$, described in \Cref{sec:rules}, and to define contravariance in \Cref{fig:syntax:pos_neg_conditions_formulas}. Note that this operation on predicates is defined metatheoretically: types and terms are the only two judgments for which there is a $-^\op$ \emph{in the syntax}.

We start by first defining a metatheoretical operation on diterms that simply swaps contexts:

\begin{definition}[Context swap of a term]\label{def:inversion_term}
Given a diterm $\Gamma^\op,\Gamma \vdash t : \C$, we indicate with $\Gamma,\Gamma^\op \vdash t^{\textsf{ctxswap}} : \C$ the \emph{context swap} of $t$, which is the term derivation obtained in the intuitive way by swapping the left and right side of its context; for example, $\left(\n x:\D^\op, x:\D \vdash x : \D\right)^{\textsf{ctxswap}} = \left(\n x:\D, x:\D^\op \vdash \n x : \D\right)$, and $\left(\n x:\C^\op, x:\C \vdash F(\n x) : \D\right)^{\textsf{ctxswap}} = \left(\n x:\C, x:\Cop \vdash F(x) : \D\right)$ for some term $\n x:\C^\op, x:\C \vdash F(x) : \D$. Crucially, the return type of the term does not change, which would be the case with the $t^\op$ operation internal to the syntax. Effectively this operation only rearranges the de Bruijn indices of variables, which is what the judgments for variance in \Cref{fig:syntax:pos_neg_conditions_formulas} use to detect co/contravariance.
\end{definition}

\begin{definition}[Opposite predicate]\label{def:inversion}
Given a predicate $[\Gamma]\ P$, there is a predicate in context $\Gamma^\op$ called \emph{the opposite of $P$} defined by (metatheoretical) induction on derivations of predicates:
  \[
  \begin{array}{l}
    -^\mathsf{op} : \set{[\Gamma]\ {-} \prop} \to \set{[\Gamma^\op]\ {-} \prop} \\
    \left( \top \right)^\mathsf{op} := \top \\
    \left( P \Rightarrow Q \right)^\mathsf{op} := P^\mathsf{op} \Rightarrow Q^\mathsf{op} \\
    \left( P \times Q \right)^\mathsf{op} := P^\mathsf{op} \times Q^\mathsf{op} \\
    \left( P(s \mid t) \right)^\mathsf{op} := P(s^{\textsf{ctxswap}} \mid  t^{\textsf{ctxswap}}) \\
    \left( \hom_\C(s, t) \right)^\mathsf{op} := \hom_{\C}(s^{\textsf{ctxswap}}, t^{\textsf{ctxswap}}) \\
    \textstyle \left( \Coendf{x:\C} P(\n x,x) \right)^\mathsf{op} := \Coendf{x:\C^\op} P(\n x,x)^\op \\
    \textstyle \left( \Endf{x:\C} P(\n x,x) \right)^\mathsf{op} := \Endf{x:\C^\op} P(\n x,x)^\op
  \end{array}
  \]
This operation can similarly be defined by inverting the polarity of a single variable: given a predicate $[x:\C,\Gamma]\ P(\n x,x)$ we denote with $[x:\Cop,\Gamma]\ P^{x \mapsto \op}(x,\n x)$ the predicate obtained by inverting the polarity of each position in $P$ where $x$ is used. A similar definition can be extended on propositional contexts $\Phi$. All these operations on predicates are clearly involutive.
\end{definition}

\begin{example}\label{example:predicate_inversion}
Taking the predicate of \Cref{example:predicate} and applying the predicate inversion operation $\left(\hom_\D(y,F(\overline x))\right)^\op$ produces the following derivation:
\[
\begin{prooftree}
\infer0{\n x\!:\!\C^\op,\n y\!:\!\D^\op,x\!:\!\C,y\!:\!\D \vdash \n y \!:\! \Dop}
\infer0{\n x\!:\!\C^\op,\n y\!:\!\D^\op,x\!:\!\C,y\!:\!\D \vdash x : \C}
\infer1{\n x\!:\!\C^\op,\n y\!:\!\D^\op,x\!:\!\C,y\!:\!\D \vdash F(x) : \D}
\infer2{[x\!:\!\C,y\!:\!\D]\ {\hom}_\D(\n y,F(x)) \prop}
\end{prooftree}
\]
\end{example}

The judgment for contravariance $\Gamma \ni x:\A \mathsf{\ contra\ in\,} P$ in \Cref{fig:syntax:pos_neg_conditions_formulas} is defined in terms of the covariant one and the notion of opposite predicate $P^\op$. Note that the well-formedness of this judgment relies on the fact that $\Gamma \ni x:\C$ implies $\Gamma^\op \ni x:\C^\op$.

\begin{example}[Contravariance, formally]
We give an example of a formal derivation for contravariance, following \Cref{ex:variance_formally}:
\[
\begin{prooftree}
\infer0{[\cdots] \ni \n x : \Cop \textsf{ unused in } \n y}
\infer0{[\n x\!:\!\Cop,\n y\!:\!\Dop,x\!:\!\C,y\!:\!\D] \ni \n x : \Cop \textsf{ unused in } x : \D}
\infer1{[\n x\!:\!\Cop,\n y\!:\!\Dop,x\!:\!\C,y\!:\!\D] \ni \n x : \Cop \textsf{ unused in } F(x) : \D}
\infer2{[x\!:\!\C,y\!:\!\D] \ni x : \C \textsf{ cov in } {\hom}_\D(\n y,F(x))}
\infer0{\cdots\vphantom{[x\!:\!\C,y\!:\!\D] \ni x : \C \textsf{ cov in } {\hom}_\D(\n y,F(x))}}
\infer2{[x\!:\!\Cop,y\!:\!\Dop] \ni x : \Cop \textsf{ cov in } {\hom}_\D(\n y,F(x)) \Rightarrow P}
\infer1{[x\!:\!\C,y\!:\!\D] \ni x : \C \textsf{ contra in } {\hom}_\D(y,F(\overline x)) \Rightarrow P}
\end{prooftree}
\]
\end{example}

\subsection{Rules}\label{sec:rules}

We now describe and give intuition for the main rules for entailments of our type theory in \Cref{fig:syntax:pos_neg_conditions_formulas}.

\begin{remark}[Notation for entailments]
    \label[remark]{notation:nat}
We use type-theoretic notation for entailments,
\[[x:\C, y:\D, ...]\ a:P(\overline x,x,\overline y,y,...),b:Q(\nxx,\nyy,...), ... \vdash \alpha[a,b,...] : R(\overline x,x,\overline y,y,...)\]
where we give names to each assumption in the list $\Phi := P,Q,...$. We overload square brackets $\alpha[a,b,...]$ both to indicate the assumptions and to denote composition of entailments in $\Rulecutdin$ and $\Rulecutnat$.
\end{remark}

\noindent Some of our rules are formulated in ``adjoint-form'' (e.g.~\cite[4.1.7, 4.1.8]{Jacobs1999categorical}), i.e., as {natural} \emph{bijections} between entailments. We use double lines in \Cref{fig:syntax:entailments} to indicate such isomorphisms of entailments, using judgmental equality of entailments to ensure that one direction is the inverse of the other. Naturality coincides with the fact that these isomorphisms commute with (both) the cut rules in the equational theory whenever possible: we use this in \Cref{sec:examples_coend_calculus} for the Yoneda technique. We give a spelled-out example of adjoint-form in \CiteAppendix{A}{appendix:syntax} for the $\Ruleend$ rule, describing precisely the naturality requirement for the rules in such form.

\begin{itemize}[leftmargin=*]
\item {\textbf{Structural rules.}} The rules $\Rulevar$, $\Ruleweaken$, $\Rulecontract$ capture the usual structural rules for assumptions, weakening, and contraction.
\item {\textbf{Products.}} The rule $\Ruleprod$ for conjunction $P \times Q$ is standard: reading the rule top-to-bottom, given a proof $[\Gamma]~\Phi \vdash P \times Q$ one can extract a proof $[\Gamma]~\Phi \vdash P$. Similarly, given two entailments with type $P$ and $Q$ in the same context one obtains an entailment with type $P \times Q$.
\item {\textbf{Polarized implication.}} Implication $\Ruleexp$ is similarly captured via the adjoint formulation, with a catch regarding polarity: the key idea is that a predicate $P(\n x,x)$ can be curried from one side to the other of the entailment \emph{by reversing the variance of all its variables}, i.e., using $P^\op$. Contrary to naturals and presheaves~\cite{Leinster2014basic}, dinaturals can be curried directly via the $\Ruleexp$ rule by currying each component of $\alpha$ in $\Set$. A similar idea is described in~\cite{Girard1992normal,Bainbridge1990functorial} as \emph{twisted exponential}.
\item {\textbf{(Co)ends.}} The rules $\Ruleend, \Rulecoendfrobenius$ capture the directed quantifiers of our type theory, i.e., (co)ends. These are also characterized in ``adjoint-form'', following precisely the same formulation of \cite[4.1.8]{Jacobs1999categorical}. Note that $\Phi$ is given type in $\Gamma$, and we do not make this weakening explicit.
\item {\textbf{Reindexing.}} Following the doctrinal presentation of logic (see~\cite{Jacobs1999categorical,Pitts1995categorical} for standard accounts), variables in entailments can be substituted with terms using the rule $\Rulereindex$: in particular, entailments can be substituted with \emph{diterms}, i.e., terms that are allowed to access the \emph{whole concatenation of contexts} $\Gamma^\op,\Gamma$. The fact that $F$ is a \emph{di}term is not a mere technical point, and it is used in \Cref{thm:dinat_collapse,thm:dinat_collapse,thm:op_of_entailments} to derive certain non-trivial structural rules related to variance.
\item {\textbf{Cut naturals-dinaturals.}} We present two restricted cut rules $\Rulecutdin,\Rulecutnat$ that allow entailments to be composed together. Associativity and identities for these is detailed in \CiteAppendix{A}{appendix:syntax}, along with a coherence condition that makes the two cuts agree whenever both entailments are \emph{naturals}. The occurrences $\na,\nb$ in $\Phi$ in $\Rulecutnat$ are needed to make sure that, in the semantics, $\alpha$ is natural in $a,b$ when the domain is \emph{just} $P$, i.e., by using $\Ruleexp$ to move $\Phi$ and invert the variance of $\n a,\n b$. Similarly, $P$ must also not syntactically depend on $\Gamma$ to ensure naturality in $a,b$, but both $\Phi$ and $Q$ can depend on $\Gamma$ without any restriction; we elaborate on this in the semantics of cuts in \Cref{sec:semantics}, which we use to state the naturality requirement for, e.g., ends in \CiteAppendix{A}{appendix:syntax}.
\item {\textbf{Directed equality elimination.}} The operational meaning behind $\RuleJ$ is the following: having identified two \emph{covariant} positions $a\!:\!\Cop$ and $b\!:\!\C$ in the predicate $P$, if there is a directed equality $\hom_\C(a,b)$ in context then it is enough to prove that $P$ holds ``on the diagonal'', where the two positions have been collapsed with the same dinatural variable $z:\C$; moreover, $a,b$ can be collapsed together in the context $\Phi$ \emph{only if they appear contravariantly}, i.e., as $\overline a$ and $\overline b$.
\item {\textbf{Dependent $\hom$ elimination.}} A \emph{dependent} version of directed $\J$, rule $\RuleJeq$, is needed to prove equational properties of maps definable with $\RuleJ$; this is done by allowing $\hom(a,b)$ to be contracted \emph{inside equality judgments}. Intuitively, given entailments $\alpha[e]$ and $\beta[e]$ with an equality in context $e:\hom_\C(a,b)$ which can be contracted using $\RuleJ$, we can deduce that $\alpha$ and $\beta$ are equal everywhere as soon as they are equal on $e = \refl_{\C,z}$ for every $z : \C$.
\end{itemize}

\section{Directed Equality \emph{à la} Martin-Löf}
\label{sec:examples_directed_equality}

We show how the rules for directed equality can be used to obtain the same terms definable with symmetric equality in \ML type theory, and proving properties about them follows precisely the steps of the usual proofs, i.e., by equality contraction and computation rules~\cite{UnivalentFoundationsProgram2013homotopy,Hofmann1997syntax}. All examples in this section satisfy the constraints for $\Rulecutnat,\Rulecutdin$ to be applied.

We start by showing transitivity of directed equality, i.e., categories have composition maps.

\begin{example}[Composition in a category]\label{ex:composition_example}%
The following derivation constructs the \emph{composition map for $\C$}, which is covariant in $a:\Cop,c:\C$ and dinatural in $b:\C$:
\[
\begin{prooftree}
\infer0[\Rulevar]{[z:\C,c:\C]~\takespace{f:\hom(a,b),\, g:\hom(\n b,c)}{g : \hom(\n z, c)} & \vdash g : \hom(\n z,c)}
\infer1[\RuleJ]{[a:\Cop,b:\C,c:\C]\ {f:\hom(a,b),\, g:\hom(\n b,c)} & \vdash {J(g)} : \hom(a,c)}
\end{prooftree}
\]
We contracted the first equality $f:\hom(a,b)$. Rule $\RuleJ$ can be applied since $a,b$ appear only contravariantly in context ($a$ does not appear) and covariantly in the conclusion ($\overline b$ does not). We now prove that $\mathsf{comp}[f,g] := J(g)$, denoted as ``$f \< g$'', is unital on identities (i.e., $\refl_\C$) and associative. Since we chose to contract $f$, the computation rule ensures unitality on the left:
\[
\begin{prooftree}
\infer0[\RuleJcomp]{[z:\C,c:\C]\ g:\hom(\n z,c) \vdash \refl_z \scomp g = g : \hom(\n z,c)}
\end{prooftree}
\]
On the other hand, to show that composition is right-unital we use dependent directed equality induction $\RuleJeq$, where now it is enough to just consider the case in which $a=z=w$ and $f = \refl_w$,
\[
\begin{prooftree}
    \infer0[\RuleJcomp]{[w:\C]\ \emptyctx & \vdash \refl_w \scomp \refl_w = \refl_w : \hom(\n w,w)}
    \infer 1[\RuleJeq]{[a:\Cop,z:\C]\ f:\hom(a,z) & \vdash {f \scomp \refl_z} = f : \hom(a,z)}
\end{prooftree}
\]
which follows by the computation rule for $\comp$ since $\refl_w$ is on the left. Similarly, to show associativity we just need to consider the case $a=b=z$ and $f=\refl_z$,
\vspace{-0.5em}
\begin{adju}[1]
\begin{prooftree}
\infer0[\RuleJcomp]{[z:\C,c:\C,d:\C]~\takespace[r]{f:\hom(\n a,b), g:\hom(\n b,c), h : \hom(\n c,d)}{g:\hom(\n z,c), h : \hom(\n c,d)}~\vdash & {\refl_z \scomp (g \scomp h)} = (\refl_z \scomp g) \scomp h : \hom(\n z,d)}
\infer1[\RuleJeq]{
[a:\C,b:\C,c:\C,d:\C]\ f:\hom(\n a,b), g:\hom(\n b,c), h : \hom(\n c,d) \vdash &{f \scomp (g \scomp h)} = (f \scomp g) \scomp h : \hom(\n a,d)}
\end{prooftree}
\end{adju}
\vspace{0.5em}

where in the top sequent both entailments are equal to $g \scomp h$ by the computation rules of $\comp$.
\end{example}

\begin{example}[Functorial action on morphisms]
For any term/functor $\C \vdash F : \D$, the functorial action on morphisms of $F$ corresponds with the fact that any term $F$ respects directed equality, i.e., directed equality is a congruence:
\[
\begin{prooftree}
\infer0[\Rulereindex+\Rulerefl]{[z:\C]\,\takespace{f:\hom_\C(\overline x,y)}{\emptyctx} & \vdash F^*(\refl_{\C}) : \hom_\D(F^\op(\overline z),F(z))}
\infer1[\RuleJ]{[x:\C,y:\C]\,f:\hom_\C(\overline x,y) & \vdash \J(F^*(\refl_{\C})) : \hom_\D(F^\op(\overline x),F(y))}
\end{prooftree}
\]
and thus we define $\map_F[f] := \J(F^*(\refl_{\C}))$, using $\Rulereindex$ with $F$ in the top sequent.

The computation rule states that $F$ maps identities to identities:
\[
\begin{prooftree}
\infer0[\RuleJcomp]{[z:\C]\,\top \vdash \map_F[\refl_\C] = F^*(\refl_\C) : \hom_\D(F^\op(\nx),F(x))}
\end{prooftree}
\]
The following shows functoriality for free; both top sides reduce to $\map_F[g]$ using $\RuleJcomp$:
\vspace{-0.5em}
\begin{adju}[1]
\begin{prooftree}
\infer0[\RuleJcomp]{
[z:\C,c:\C]~\takespace[r]{f:\hom(\n a,b), g:\hom(\n b,c)}{g:\hom(\n z,c)}~\vdash & {\map_F[\refl_z \scomp g]} = \refl_{F(z)}~\scomp \map_F[g] : \hom(\n z,d)}
\infer1[\RuleJeq]{
[a:\C,b:\C,c:\C]\ f:\hom(\n a,b), g:\hom(\n b,c) \vdash & {\map_F[f \scomp g]} = \map_F[f] \scomp \map_F[g] : \hom(\n a,d)}
\end{prooftree}
\end{adju}
\end{example}

\begin{example}[Transport]%
Transporting points of predicates along directed equalities~\cite[2.3.1]{UnivalentFoundationsProgram2013homotopy} is the functorial action of copresheaves $P : \C\!\>\!\Set$, i.e., predicates $[x : \C]\,P \prop$, for $x$ only positive:
\[
\begin{prooftree}
\infer0[\Rulevar]{[z:\C]\ k: P(z) & \vdash k:P(z)}
\infer1[\RuleJ]{[a:\Cop,b:\C]\ f:\hom(a,b), k:P(\overline a) & \vdash J(k) : P(b)}
\end{prooftree}
\]
The computation rule simply states that transporting a point of $P(a)$ along the identity morphism with $\subst[f, k] := \J(k)$ is the same as giving the point itself, i.e., $\subst[\refl_{\C},k] = k$.
\end{example}

\begin{example}[Pair of rewrites]%
Pairs of directed equalities induce directed equalities between pairs.
The other direction (i.e., ``directed injectivity of pairs'') follows from congruence of directed equality with the projections $\pi_1,\pi_2$ and then using the judgmental equality of terms.
\[
\begin{prooftree}
\infer0[\Rulereindex+\Rulerefl]{[z:\C,z':\D]\ \emptyctx & \vdash \hom_{\C\times\D}((\n z,\n z),(z,z))}
\infer1[\RuleJ]{[a':\Cop,b':\D,z:\C]\ g:\hom_{\D}(b,b')  & \vdash \hom_{\C\times\D}((\n z,b),(z,b'))}
\infer1[\RuleJ]{[a,a':\Cop,b,b':\D]\ f:\hom_{\C}(a,a'), g:\hom_{\D}(b,b')  & \vdash \hom_{\C\times\D}((a,b),(a',b'))}
\end{prooftree}
\]
\end{example}

\begin{example}[Higher-dimensional rewriting]%
The following shows that a directed equality between functors induces a natural transformation~\cite[1.4.1]{Loregian2021coend} (omitting the resulting term for simplicity):
\[
\begin{prooftree}
\infer0[\Rulereindex+\Rulerefl]{[H:[\C,\D],x:\C]\ \emptyctx & \vdash \hom_\D(\n H \cdot \n x,H \cdot x)}
\infer1[\Ruleend]{[H:[\C,\D]]\ \emptyctx & \vdash \Endf{x:\C} \hom_\D(\n H \cdot \n x,H \cdot x)}
\infer1[\RuleJ]{[F:[\C,\D]^\op,G:[\C,\D]]\ e:\hom_{[\C,\D]}(F,G) & \vdash \Endf{x:\C} \hom_\D(F \cdot \n x,G \cdot x)}
\end{prooftree}
\]
The opposite direction is not derivable in general, since in the case where $\C,\D$ are discrete categories (i.e., sets), it corresponds to function extensionality.
\end{example}

\begin{example}[Existence of singletons]\label{ex:singletons}%
The following derivation asserts that singleton subsets are inhabited~\cite[Remark 1.12.1]{UnivalentFoundationsProgram2013homotopy}, i.e., there is a proof for the first-order logic formula $\forall x.\exists y.x=y$:
\[
\begin{prooftree}
\infer0[\Rulevar]{[x:\Cop]\ k:\Coendf{y:\C} \hom_\C(x,y) & \vdash k:\Coendf{y:\C} \hom_\C(x,y)}
\infer1[\Rulecoendfrobenius]{[x:\Cop,y:\C]\ f:\hom_\C(x,y) & \vdash \textsf{coend}^{-1}(k)[f] : \Coendf{y:\C} \hom_\C(x,y)}
\infer1[\Rulecutnat]{[x:\C]\ \emptyctx & \vdash \textsf{coend}^{-1}(k)[\refl_x] : \Coendf{y:\C} \hom_\C(\n x,y)}
\infer1[\Ruleend]{[\,]\ \emptyctx & \vdash \textsf{end}(\textsf{coend}^{-1}(k)[\refl_x]) : \Endf{x:\C} \Coendf{y:\C} \hom_\C(\n x,y)}
\end{prooftree}
\]
This derivation is actually an isomorphism in the model, i.e., singletons are contractible. This follows from dependent directed equality contraction, which we show in detail in \CiteAppendix{B}{sec:appendix:dtt_other_derivations}.
\end{example}

The following theorems show that in our type theory both naturality and dinaturality follow ``for free'' from dependent directed equality contraction.  Cuts are allowed in both cases because of the \emph{natural} appearance of variables in $\textsf{subst}$.

\begin{example}[Internal naturality for entailments]
\label{ex:internal_naturality}
For any $[x:\C]\ P(x) \vdash \alpha : Q(x)$, an internal version of naturality for entailments holds via $\RuleJcomp$:
\vspace{-1.0em}
\begin{adju}[1.0]
\begin{prooftree}
\infer0[\RuleJcomp]{[z:\C]\,k:P(z) & \vdash \alpha[\subst_P[\refl_z,k]] = \subst_Q[\refl_z,\alpha[k]] : Q(z) }
\infer1[\RuleJeq]{[a:\Cop, b:\C]\,f:\hom_\C(a,b), k:P(\n a) & \vdash \alpha[\subst_P[f,k]] = \subst_Q[f,\alpha[k]] : Q(b)}
\end{prooftree}
\end{adju}
\end{example}

\begin{example}[Internal dinaturality for entailments]
\label{ex:internal_dinaturality}
For any $[x:\C]\ P(\nxx) \vdash \alpha : Q(\nxx)$, an internal version of (di)naturality for entailments, as in \Cref{def:dinaturality}, holds via $\RuleJcomp$:
\vspace{-0.8em}
\begin{adju}[1.0]
\begin{prooftree}
\infer0[\RuleJcomp]{[z:\C]\,k:P(\nzz) \vdash \subst_Q[(\refl_z,\refl_z),[\alpha[\subst_P[(\refl_z,\refl_z),k]]]] }
\infer[no rule]1{\phantom{aaaaaaaaaaaaaaaaaaaaa} = \subst_Q[(\refl_z,\refl_z),[\alpha[\subst_P[(\refl_z,\refl_z),k]]]] : Q(\nzz)}
\infer1[\RuleJeq]{[a:\Cop, b:\C]\,f:\hom_\C(a,b), k:P(\n b,\n a) & \vdash \subst_Q[(\refl_b,f),[\alpha[\subst_P[(f,\refl_a),k]]]] }
\infer[no rule]1{& = \subst_Q[(f,\refl_a),[\alpha[\subst_P[(\refl_b,f),k]]]] : Q(a,b)}
\end{prooftree}
\end{adju}
\end{example}
\noindent We elucidate more in detail why the above sequence of cuts is valid in \CiteAppendix{H}{sec:appendix:illustrate_internal_dinat}.

We show in \CiteAppendix{B}{sec:appendix:dtt_other_derivations} how natural transformations between \emph{terms} can be defined using ends~\cite[1.4.1]{Loregian2021coend}, which we use to capture the identity natural, composition of naturals, and internal naturality.

\subsection{On the Adjoint Formulation}
\label{sec:adjoint_formulation}

We elaborate how the adjoint formulation, i.e., the fact that rules are formulated as bijections of entailments, differs from the standard type-theoretical presentation of connectives in the style of natural deduction or sequent calculus~\cite[5.1.6]{Mimram2020program}. Since in both of these systems cut is either derivable or admissible, we cannot recover the usual rules for introduction/elimination for quantifiers and implication, since in the semantics this would enable us to compose any two entailments/dinatural transformations.
We give an example of introduction/elimination-like rules derivable from the adjoint formulation for (co)ends in \Cref{thm:elimination_rule_ends}.

\begin{example}[Rules for (co)ends with terms]\label{thm:elimination_rule_ends}
The following derivations capture an elimination rule for ends and, dually, an introduction rule for coends using a concrete diterm $\Gamma^\op,\Gamma \vdash F : \C$:\vspace{-0.4em}\[
\begin{prooftree}
\infer[no rule]0{[\Gamma]\ \Phi \vdash \alpha : \End{x:\C} P(\n x,x)}
\infer1[\Ruleendinv]{[x:\C,\Gamma]\ \Phi \vdash \textsf{end}^{-1}(\alpha) : P(\n x,x)}
\infer1[\Rulereindex]{[\Gamma]\ \Phi \vdash F^*(\textsf{end}^{-1}(\alpha)) : P(F, F)}
\end{prooftree}
\quad
\begin{prooftree}
\infer[no rule]0{[\Gamma]\ k:\Coend{x:\C} P(\n x, x), \Phi \vdash \alpha : Q}
\infer1[\Rulecoendinv]{[x:\C,\Gamma]\ k:P(\n x,x), \Phi \vdash \textsf{coend}^{-1}(\alpha) : Q}
\infer1[\Rulereindex]{[\Gamma]\ k:P(F, F), \Phi \vdash F^*(\textsf{coend}^{-1}(\alpha)) : Q}
\end{prooftree}
\]
We can recover the the projection and injection maps of (co)ends (i.e., the ``(co)units'' of the adjoint formulation) by picking $Q := \Coendf{x:\C} P(\n x, x)$, $\Phi := \Endf{x:\C} P(\n x,x), \Phi'$ and $\alpha := \Rulevar$ as follows:%
\vspace{-0.9em}%
\begin{adju}[1.0]%
\begin{prooftree}%
\infer0{[\Gamma]\ k:\End{x:\C} P(\n x,x), \Phi' \vdash F^*(\textsf{end}^{-1}(k)) : P(F, F)}
\end{prooftree}
\ \
\begin{prooftree}
\infer0{[\Gamma]\ k:P(F, F), \Phi \vdash F^*(\textsf{coend}^{-1}(k)) : \Coend{x:\C} P(\n x, x)}
\end{prooftree}
\end{adju}
\LinkRulecoendunit%
\LinkRuleendcounit%
\end{example}%
\vspace{-0.45em}%
The crucial aspect is that we cannot derive the above introduction/elimination rules where, instead, the end appears on the left, or the coend on the right: these would be the remaining rules for the quantifiers of \emph{sequent calculus}, and hence full cut would be admissible. In particular we only recover $\forall_R$ and $\exists_L$, but not $\forall_L$ and $\exists_R$, using the terminology of~\cite[5.1.8]{Mimram2020program}. We formally prove the non-admissibility of an unrestricted cut rule in \Cref{thm:no-full-cut}.

In standard accounts of logic, the adjoint-form is equivalent to the usual introduction and elimination rules for connectives, but only in the presence of \emph{cut}~\cite[4.1.8]{Jacobs1999categorical}. Hence, in our setting we can recover the usual rules only in contexts that are sufficiently \emph{natural} to allow for cuts to be applied. We give an example of this situation in \Cref{ex:cutfulcoends} to derive introduction/elimination-like rules for existentials in the style of natural deduction~\cite[5.1.6]{Mimram2020program}, and derive in \Cref{ex:cutfulexponential} transitivity of implication (which directly translates to an elimination rule).

\begin{example}[Natural deduction-style rules for coends]\label{ex:cutfulcoends}
The following derivations capture rules where coends are on the right of the turnstile: an elimination rule, an introduction rule with a concrete term $\Delta \vdash F : \C$ (not a \emph{di}term), and an introduction rule with two variables $x:\Cop,y:\C$:%
\vspace{-1.3em}%
\begin{adju}[1.0]%
\begin{minipage}{0.39\textwidth}\begin{prooftree}
\infer[no rule]0{[\Gamma,d\!:\!\Delta]\ \Phi(d) \vdash \Coendf{x:\C} P(\n x,x,d)}
\infer[no rule]1{[\Gamma,z:\C,d\!:\!\Delta]\ P(\n z,z,d),\Phi(d) \vdash Q(d)}
\infer1{[\Gamma,d:\Delta]\ \Phi(d) \vdash Q(d)}
\end{prooftree}
\end{minipage}%
\begin{minipage}{0.30\textwidth}\hspace{0.75em}%
\begin{prooftree}
\infer[no rule]0{[\Gamma,d\!:\!\Delta]\ \Phi(d) \vdash Q(F(d),d)}
\infer1{[\Gamma,d\!:\!\Delta]\ \Phi(d) \vdash \Coendf{x:\C} Q(x,d)}
\end{prooftree}
\end{minipage}
\begin{minipage}{0.34\textwidth}
\begin{prooftree}
\infer[no rule]0{[\Gamma,x\!:\!\Cop,y\!:\!\C]\ \Phi(x,y) \vdash R(x,y)}
\infer1{[\Gamma]\ \Phi(x,y) \vdash \Coendf{z:\C} R(\n z,z)}
\end{prooftree}
\end{minipage}
\end{adju}%
\vspace{0.3em}
Note that the variables of $\Delta$ are always used naturally, and $P,Q,R$ do not depend on $\Gamma$. $F$ cannot be a diterm since $Q(F(\n x,x))$ would make the top entailment dinatural in the variables of $\Delta$.
We report complete derivations for these rules in \CiteAppendix{C}{sec:appendix:adjoint_other_rules}.
\end{example}

\begin{example}[Transitivity of implication]\label{ex:cutfulexponential}
Implication is transitive \emph{in natural contexts}, with $[\Gamma]\ \Phi$:
\[
\begin{prooftree}
  \hypo{[a:\C]\ \Phi & \vdash \alpha : P(\n a) \Rightarrow Q(a)}
  \infer1[\Ruleexpinv]{[a:\C]\ P(a),\Phi & \vdash \textsf{exp}^{-1}(\alpha) : Q(a)}
  \hypo{[a:\C]\ \Phi & \vdash \beta : Q(\n a) \Rightarrow R(a)}
  \infer1[\Ruleexpinv]{[a:\C]\ Q(a),\Phi & \vdash \textsf{exp}^{-1}(\beta) : R(a)}
  \infer2[\Rulecutnat]{[a:\C]\ P(a),\Phi & \vdash \alpha\,;\beta := \textsf{exp}^{-1}(\beta)[\textsf{exp}^{-1}(\alpha)] : R(a)}
\end{prooftree}
\]
Polarized implication is in general not transitive, since, as we will see in \Cref{sec:semantics}, entailments are interpreted as dinaturals which do not compose in general; we show in \CiteAppendix{B}{sec:appendix:dtt_other_derivations} how one can use implication and ends to internalize the set of all entailments/dinaturals.
\end{example}

\subsection{Aspects of Directed Type Theory}
\label{sec:other_aspects}

We investigate in this section other proof-theoretical aspects of our directed type theory: in particular we show why symmetry is not immediately derivable and how all rules for directed equality can be equivalently characterized as a single isomorphism.

\begin{remark}[Syntactic failure of symmetry for directed equality]
\label[remark]{rem:failure_symmetry}
The restrictions in $\RuleJ$ illustrate why one \emph{cannot} derive that directed equality is symmetric, i.e., obtain a general map \[[a:\Cop,b:\C]\ e:\hom_\C(a,b) \vdash \mathsf{sym} : \hom_\C(\overline b,\overline a).\] The equality $e : \hom_\C(a,b)$ cannot be contracted because $\overline a$ appears in the conclusion contravariantly (similarly with $\overline b$), whereas $\RuleJ$ requires that the conclusion only has \emph{covariant} occurrences of the variables being contracted.
\end{remark}

The remark above merely illustrates why it is not derivable \emph{from the syntactic restriction}. We show in \Cref{thm:countermodel_symmetry} that the existence of a countermodel implies that it is not admissible in general.

As in the symmetric case, the rule for directed equality elimination is actually an isomorphism, and asking $\RuleJ$ to be an isomorphism fully characterizes all the rules of directed equality~\cite[3.2.3]{Jacobs1999categorical} (in the presence of the structural rules $\Rulecutnat$ and $\Rulevar$):
\begin{theorem}[Directed $\J$ as isomorphism] \agda[Dinaturality/J-Iso.agda]\label{thm:j_isomorphism}~\LinkRuleJinv%
Rule $\RuleJ$ is an isomorphism, and the inverse map is given by $J^{-1}(h) := h[\refl_\C]$ using $\Rulecutnat$ and $\Rulerefl$. Moreover, $\RuleJeq$ is logically equivalent to the rule $J(J^{-1}(\alpha)) = \alpha$ in the equational theory for every $\alpha$.
\end{theorem}
\begin{proof}
The computation rule states precisely that $J^{-1}(J(\alpha)) = \alpha$.
To show $J(J^{-1}(\alpha)) = \alpha$, we instantiate $\RuleJeq$ with $\alpha := J(\beta[\refl_\C])$ and use $\RuleJcomp$ in the hypothesis, i.e., $J(\beta[\refl_\C])[\refl_\C] = \beta[\refl_\C]$, to obtain $J(\beta[\refl_\C]) = \beta$ as desired. We show that $J(J^{-1}(\alpha)) = \alpha$ implies $\RuleJeq$: the hypothesis in $\RuleJeq$ states exactly $J^{-1}(\alpha) = \J^{-1}(\beta)$, hence $\alpha = \beta$ by applying $J$ on both sides.
\end{proof}

\begin{theorem}[$\J^{-1}\!\!\iff\!\!\refl$]
\label{thm:refl_from_hexagon}
\!Rule $\Rulerefl$ is logically equivalent to $\RuleJinv$.
\end{theorem}
\begin{proof}
Clearly $\Rulerefl$ implies $\RuleJinv$ by definition. Rule $\Rulerefl$ follows from $\RuleJinv$ in \Cref{thm:j_isomorphism} by picking $P(a,b):=\hom(a,b)$ and using the projection $\Rulevar$ to return the hypothesis $e:\hom_\C(a,b)$ as the bottom side map $h$, obtaining $\textsf{refl}_\C := J^{-1}(e)$. We leave the proof that the computation rule $J(h)[\textsf{refl}_\C] = h$ holds in \CiteAppendix{E}{appendix:computation_jinv}.
\end{proof}
The following derivations illustrate how dinaturality, intuitively, allows us to ``ignore'' polarity in the contexts of predicates, i.e., one can equivalently consider a \emph{contravariant} variable of type $\C$ as a \emph{covariant} variable of type $\Cop$, and viceversa.

\begin{theorem}[$\op$ of entailments]
\label{thm:op_of_entailments}
The following rule is derivable:
\[
\begin{prooftree}
\hypo{[x:\C,\Gamma]\ \Phi(\n x,x) \vdash \alpha : P(\n x, x)}
\infer1{[x:\Cop,\Gamma]\ \Phi^{x \mapsto \op}(x,\n x) \vdash \alpha^{{x \mapsto \op}} : P^{x \mapsto \op}(x, \n x)}
\end{prooftree}
\]
\end{theorem}
\begin{proof}
Follows by reindexing $\Rulereindex$ with the \emph{``negative projection'' diterm} $\n x:\C, x:\Cop \vdash \n x:\C$. The predicate obtained by substituting this term in $P$ coincides (metatheoretically) with $P^{x \mapsto \op}$. This reindexing is involutive in the sense that $\left(\alpha^{{x \mapsto \op}}\right)^{{x \mapsto \op}} = \alpha$ in the equational theory.
\end{proof}

In particular, the above derivation allows us to \emph{derive} different versions of $\RuleJ$ which adopt one or the other convention: for example $\RuleJ$ could be stated by requiring $a:\C$ (rather than $\Cop$) but then ask for contravariance of $a$ in the conclusion and covariance in $\Phi$. The formulation chosen in $\RuleJ$ with $a : \Cop,b : \C$ is simpler to state in terms of ``correct'' and ``incorrect'' appearances and emphasizes how the two variables play different asymmetric roles.

The following derivation shows how dinaturality allows us to capture a sort of ``mixed-variance reindexing'' $\C \to \C^\op\times\C$, since even variables with different polarities can be identified together.

\begin{theorem}[Dinatural collapse]
\label{thm:dinat_collapse}
The following rule is derivable:
\[
\begin{prooftree}
\hypo{[x:\Cop,y:\C,\Gamma]\ \Phi(\n x,x,\n y,y) \vdash \alpha : P(\n x,x,\n y,y)}
\infer1{[z:\C,\Gamma]\ \Phi(z,\n z,\n z,z) \vdash \alpha^{x,y \mapsto z} : P(z,\n z,\n z,z)}
\end{prooftree}
\]
\end{theorem}
\begin{proof}
Follows by reindexing $\Rulereindex$ with the \emph{``identity'' diterm} $\n x:\Cop, x:\C \vdash \langle \n x, x \rangle : \Cop \times \C$.
\end{proof}

The dinatural collapse operation can be used to ``downgrade'' natural transformations to dinatural transformations, which no longer compose; since we check for naturality syntactically, this allows for a situation in which two (dinatural) entailments do not compose in the syntax despite composing in the semantics (since the map being constructed remains unaltered).

\begin{remark}[Collapse loses compositionality]
\label[remark]{thm:no_compositionality}
We illustrate how dinatural collapse can make an entailment no longer composable. Recall the composition map $\textsf{\upshape comp}[f,g] := J(g)$ from \Cref{ex:composition_example}: then, the following entailments are not composable in the syntax, since both ${\textsf{\upshape comp}}^{a,b\mapsto z}$ and $\refl$ are dinatural in $z$; however, $\textsf{\upshape comp}[\refl_z,k]$ is a valid application of $\Rulecutnat$:
\[
\begin{prooftree}
\infer0{[z:\C]\ \Phi \vdash \refl : \hom_\C(\n z,z)}
\end{prooftree}\quad
\begin{prooftree}
\infer0{[a:\Cop,b,c:\C]\ {\hom_\C(a,b),\, \hom_\C(\n b,c)} & \vdash {\textsf{\upshape comp}} : \hom_\C(a,c)}
\infer1{[z,c:\C]\ {\hom_\C(\n z,z),\, \hom_\C(\n z,c)} & \vdash {\textsf{\upshape comp}}^{a,b\mapsto z} : \hom_\C(\n z,c)}
\end{prooftree}
\]
Note that one \emph{can} apply $\textsf{\upshape comp}$ to a constant dinatural $[\,]\ \emptyctx \vdash \alpha:\hom_\C(A,A)$ that selects some endomorphism for a concrete constant $[\,] \vdash A:\C$, since $\alpha$ would be natural in the empty context.
\end{remark}

We elucidate using $\Ruleexp$ why the exponential object in the category of presheaves and \emph{natural transformations} is non-trivial~\cite[6.3.20]{Leinster2014basic}, and is not the pointwise $\hom$ in $\Set$.

\begin{remark}[Exponentials for naturals]Given an entailment which is fully covariant in $x$ (i.e., a natural transformation) for predicates $[x:\C]\ F(x),G(x),H(x)$, by directly applying $\Ruleexp$,
\[
\begin{prooftree}
\hypo{[x: \C]\ F(x) \x G(x) & \vdash H(x) }
\infer[double]1[\Ruleexp]
    {[x: \C]~\takespace{F(x) \x G(x)}{G(x)} & \vdash F(\overline x) \= H(x)}
\end{prooftree}
\]
one has a natural transformation on top, but the bottom family of arrows is \emph{dinatural} in $x$.
\end{remark}
We show in \Cref{thm:derivation_presheaves_closed} how $\Ruleexp$ and the rules for directed equality can be used to give a logical proof that the usual definition of exponential for presheaves is indeed the correct one.

\section{Dinaturality}
\label{sec:dinaturality}

We recall some preliminary facts about dinatural transformations and (co)ends in order to present the semantics of our type theory. We will often abbreviate the term dinatural transformations simply as ``dinaturals'', and ordinary natural transformations as ``naturals''. %

\begin{definition}[Dipresheaves and difunctors]\label{def:dipresheaf}%
Consider the (strict) comonad $-^\diamond : \Cat \> \Cat$ defined by $\C \mapsto \Cop \x \C$, where the counit is given by projecting and comultiplication by duplicating and swapping. A \emph{dipresheaf} is simply a functor $\C^\diamond\>\Set$, i.e. a functor $\Cop\x\C\>\Set$.
\end{definition}
We always denote composition diagrammatically, i.e., $f\<g : a \> c$ for $f : a \> b, g : b \> c$.
\begin{definition}[Dinatural transformation~\cite{Dubuc1970dinatural}]\label{def:dinaturality}%
Given functors $F,G:\Cop \times \C\>\D$, a \emph{dinatural transformation} $\alpha : F \dinat G$ is a family of arrows $\alpha_x : F(x,x) \longrightarrow G(x,x)$ indexed by objects $x:\C$ such that
$\forall a,b:\C$, and $f : a \> b$ the following equation between arrows $F(b,a) \to G(a,b)$ holds:
\[F(\id_b,f)\< \alpha_b\< G(f,\id_b) = F(f, \id_a)\< \alpha_a\<G(\id_a,f).\]
\end{definition}

\begin{lemma}[Dinaturals generalize naturals~\cite{Dubuc1970dinatural}]\label[lemma]{rem:dinat_subsumes_nat}%
A natural transformation $\alpha : F \rightarrow G$ for $F,G : \C \> \D$ equivalently corresponds with a dinatural $\alpha : (\pi_2 \< F) \dinat (\pi_2\<G) :\Cop \times \C \> \D$.
\end{lemma}

The pointwise composition of two dinatural transformations is not necessarily dinatural (see~\cite{McCusker2021composing,Freyd1992functorial}), but dinaturals always compose with naturals on both the left and right side:
\begin{lemma}[Dinaturals compose with naturals~\cite{Dubuc1970dinatural}]\label[lemma]{thm:nat_dinat_compose}%
Given a dinatural transformation $\gamma : F \dinat G$ and natural transformations $\alpha : F' \> F, \beta : G \rightarrow G'$ for $F,F',G,G':\Cop\x\C\>\Set$, the map $\alpha \< \gamma \< \beta : F' \dinat G'$ defined by $(\alpha \< \gamma \< \beta)_x := \alpha_{xx} \< \gamma_{x} \< \beta_{xx}$ is dinatural.
\end{lemma}

Non-compositionality of  dinaturals  is an in\-trin\-sic property of \emph{directed proof-relevant} type theory, since in the groupoidal case they all compose (in the proof-irrelevant case, where $\Set$ is replaced by the preorder $\I := \set{0 \to 1}$, dinaturals compose trivially since there is no hexagon to check):
\begin{theorem}[Dinaturals in groupoids compose]\agda[Dinaturality/GroupoidCompose.agda]\label{thm:dinaturals_groupoid_compose}
Given a groupoid $\C$ and a category $\D$ for functors $F,G,H:\Cop \times \C\>\D$, any two dinaturals $\alpha:F\dinat G, \beta:G \dinat H$ compose.
\end{theorem}

The fundamental idea behind all rules for directed equality is given by the following elementary result, which connects dinatural transformations in $\Set$ with a corresponding natural one:

\begin{theorem}[Dinaturals and $\hom$-naturals]
\agda[Dinaturality/NaturalDinatural.agda]
\label{thm:dinatural_nat_hexagon}
For any $P,Q:\Cop\x\C\>\Set$, there is a bijection between set of dinatural transformations $P \dinat Q$ and certain natural transformations between functors $\Cop\x\C\>\Set$, as follows:
    \[
\begin{prooftree}
\label{rule:dinathomnat}
\hypo{\alpha_x : P(\nxx) \dinat Q(\nxx)}
\infer[double]1{\gamma_{ab} : \hom(a, b) \longrightarrow P^\op(b, a) \Rightarrow Q(a, b)}
\end{prooftree}
    \]
\end{theorem}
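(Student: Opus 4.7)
The plan is to exhibit the two sides of the isomorphism directly and check they are mutually inverse, in the spirit of a ninja-Yoneda argument. Given a dinatural $\alpha : P \dinat Q$, I would define the candidate natural transformation $\Phi(\alpha)$ at $(a,b) \in \C^{\op} \times \C$ and $f \in \hom(a,b)$ as the map $\Phi(\alpha)_{a,b}(f) : P^{\op}(b,a) \to Q(a,b)$ given by
\[
\Phi(\alpha)_{a,b}(f)(p) \;:=\; Q(f,\id_b)\bigl(\alpha_b(P(\id_b,f)(p))\bigr),
\]
which typechecks since $P(\id_b, f) : P(b,a) \to P(b,b)$, then $\alpha_b : P(b,b) \to Q(b,b)$, then $Q(f, \id_b) : Q(b,b) \to Q(a,b)$. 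Conversely, given $\beta$ of the stated natural-transformation type, I would set $\Psi(\beta)_x := \beta_{x,x}(\id_x) : P(x,x) \to Q(x,x)$.

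First, I would verify that $\Phi(\alpha)$ is natural in $(a,b)$: this is a routine calculation using only the functoriality of $P$ and $Q$, and in fact does not require dinaturality of $\alpha$. Next, I would verify that $\Psi(\beta)$ is dinatural. The crucial observation is that naturality of $\beta$ at the morphism $(f, \id_b) : (b,b) \to (a,b)$ in $\C^{\op} \times \C$, evaluated at $\id_b \in \hom(b,b)$, yields
\[
\beta_{a,b}(f) \;=\; Q(f, \id_b) \circ \beta_{b,b}(\id_b) \circ P(\id_b, f),
\]
while naturality at the symmetric morphism $(\id_a, f) : (a,a) \to (a,b)$, evaluated at $\id_a \in \hom(a,a)$, yields
\[
\beta_{a,b}(f) \;=\; Q(\id_a, f) \circ \beta_{a,a}(\id_a) \circ P(f, \id_a).
\]
Equating these two expressions is precisely the dinaturality hexagon for the family $\Psi(\beta)_x = \beta_{x,x}(\id_x)$.

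These same two identities also give one of the round-trips for free: $\Phi(\Psi(\beta))_{a,b}(f) = Q(f, \id_b) \circ \beta_{b,b}(\id_b) \circ P(\id_b, f) = \beta_{a,b}(f)$. The other round trip reduces to $\Psi(\Phi(\alpha))_x = Q(\id_x, \id_x) \circ \alpha_x \circ P(\id_x, \id_x) = \alpha_x$ by functoriality. The main obstacle, such as it is, lies in correctly handling the variance convention for $P^{\op}(b,a)$ so that the actions of $P$ on $(f, \id)$ and $(\id, f)$ go in the right direction and every composition typechecks; once this bookkeeping is set up, every step is a straightforward instance of functoriality or of the naturality square for $\beta$ specialised to an identity, and no genuinely new combinatorics is needed.
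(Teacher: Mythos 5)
Your construction is the same as the paper's: the forward map pads $\alpha_b$ with the functorial actions of $P$ and $Q$ along $f$ (one leg of the dinaturality hexagon), the inverse sets $a=b$ and evaluates at $\id$, and your verifications that $\Psi(\beta)$ is dinatural and that the two round trips hold are correct. However, your claim that naturality of $\Phi(\alpha)$ ``is a routine calculation using only the functoriality of $P$ and $Q$, and in fact does not require dinaturality of $\alpha$'' is a genuine error, and it cannot be true: you yourself observe that $\Psi(\Phi(\alpha))=\alpha$ holds for an \emph{arbitrary} family $\alpha_x : P(x,x)\to Q(x,x)$ by functoriality alone, and that $\Psi$ of any natural $\beta$ is dinatural; so if $\Phi(\alpha)$ were natural for every family $\alpha$, every family would be dinatural, which is false. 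The hypothesis has to enter exactly there.

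Concretely, with your formula $\Phi(\alpha)_{a,b}(f)=Q(f,\id_b)\circ\alpha_b\circ P(\id_b,f)$, naturality in the contravariant variable $a$ is indeed pure functoriality, but naturality in the covariant variable $b$ along $v\colon b\to b'$ demands
\[
Q(v\circ f,\id_{b'})\circ\alpha_{b'}\circ P(\id_{b'},v\circ f)\;=\;Q(f,v)\circ\alpha_b\circ P(v,f),
\]
which compares the two different components $\alpha_{b'}$ and $\alpha_b$; after splitting off the actions of $f$, it is precisely the dinaturality hexagon for $\alpha$ at $v$, pre- and post-composed with $P(\id_{b'},f)$ and $Q(f,\id_{b'})$. (Had you chosen the other leg, $Q(\id_a,f)\circ\alpha_a\circ P(f,\id_a)$, the roles of $a$ and $b$ would swap, but dinaturality would still be needed.) Once this step is carried out using the hexagon --- this is exactly what the paper means when it says the isomorphism ``notably follows from the (di)naturality of both sets of maps'' --- your argument coincides with the paper's proof.
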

\begin{proof}
We describe the maps in both directions:
\begin{itemize}
    \item[($\Downarrow$)] Given a dinatural $\alpha : P \dinat Q$ and a morphism $f : \hom(a,b)$, the map $P(b,a) \> Q(a,b)$ corresponds precisely with the sides of the equation given in \Cref{def:dinaturality} for dinaturality, which is obtained by applying the functorial action of $P$ and $Q$.
    \item[($\Uparrow$)] Take $a=b$ and precompose with $\id_a\in\hom(a,a)$.
\end{itemize}
The fact that this is an isomorphism follows from the (di)naturality of both sets of maps. Note the similarity between the above argument and the proof of the Yoneda lemma, where the two central ideas are precisely applying the functorial action and instantiating at $\id$, with the isomorphism following from (di)naturality.
\end{proof}

\noindent We now recall definitions for the semantics of (co)ends, later used to give semantics to quantifiers.

\begin{definition}[(Co)wedges for $P$ {\cite[1.1.4]{Loregian2021coend}}]
Given $P:\Cop\x\C\>\D$, a \emph{wedge for $P$} is a pair object/dinatural $(X:\D,\alpha:\textsf K_X \dinat P)$, where $\textsf K_X$ is the constant functor in $X$. A \emph{wedge morphism} $(X,\alpha) \to (Y,\alpha')$ is an $f\!:\!X \to Y$ of $\D$ such that $\forall c:\C,\alpha_c = f \< \alpha'_c$. A \emph{cowedge} is a wedge in $\Dop$, denoting the categories of (co)wedges as $\Wedge(P),\Cowedge(P)$.
\end{definition}

\begin{definition}[(Co)ends {\cite[1.1.6]{Loregian2021coend}}]
Given a functor $P:\Cop\x\C\>\D$, the \emph{end} of $P$ is defined to be the terminal object of $\Wedge(P)$, whose object in $\D$ is denoted as $\Endf{x:\C} P(\nxx)$. Dually, the \emph{coend} of $P$ is the initial object of $\Cowedge(P)$, denoted similarly as $\Coendf{x:\D} P(\nxx)$. The integral symbol acts as a binder, in the sense that ``$\Endf{c:\C} P(c,c)$'' and ``$\Endf{x:\C} P(x,x)$'' are ($\alpha$-)equivalent; moreover, $P$ can depend on many parameters, e.g., if $P:(\Aop \x \A) \x (\Bop \x \B) \to \D$ then $\Endf{b:\B} P(\n a,a,\n b,b):\Aop \x \A \to \D$. (Co)ends exist when $\D$ is (co)complete~\cite{Loregian2021coend}.
\end{definition}

\section{Semantics}
\label{sec:semantics}

We now describe the categorical semantics of our directed type theory: the main idea behind categorical semantics is that we define functions that associate a certain mathematical object to each derivation tree, inductively. Whenever present, the symbol \agda\ links to the Agda formalization of the semantic interpretation of each rule.

The semantics for types, contexts, variables, terms, predicates and propositional contexts is given in \Cref{fig:semantics_rules}. The equality judgments associated to these are interpreted in a straightforward way, which we omit from this presentation; such equalities are only used to take care of involutions and the equational theory of terms, for which we therefore give a \emph{strict} semantics: equality of types and contexts is interpreted as \emph{isomorphisms} of categories, term equality is strict isomorphism of functors. Equality of predicates is similarly trivial since it only inherits congruence rules from the previous equality judgments.

The main rules of our type theory are those of entailments, for which we describe in detail the intuition behind the semantics of each rule and its soundness in dinatural transformations.

\begin{figure*}[h]
    \begin{adju}[1.0]
  \begin{tabular}{l@{}l@{}l}
    $\begin{array}{l}
        \begin{array}{l@{\,}l}
            \nowidth{\sem{-} : \set{{-} \type} \to \Cat} & \\
            \sem{\C^\op} & := \sem{\C}^\op \\
            \sem{\C \times \D} & := \sem{\C} \times \sem{\D} \\
            \sem{[\C,\D]} & := [\sem{\C},\sem{\D}] \\
            \sem{\top} & := \top
        \end{array} \\[3.0em]
        \begin{array}{l@{\,}l}
            \nowidth{\sem{-} : \set{{-} \ctx} \to \Cat} & \\
            \sem{[]} & := \top \\
            \sem{\Gamma^\op} & := \sem{\Gamma}^\op \\
            \sem{\Gamma, \C} & := \sem{\Gamma} \times \sem{\C}
        \end{array}
    \end{array}$
    &
    $\begin{array}{l}
        \begin{array}{l@{\,}l}
            \nowidth{\sem{-}^v : \set{\Gamma \ni - : \C} \to [\sem{\Gamma},\sem{\C}]} & \\
            \sem{\Gamma,x:\C \ni x : \C}^v & := \pi_2 \\
            \sem{\Gamma,y:\D \ni x : \C}^v & := \pi_1 \< \sem{x}^v
        \end{array} \\[2.0em]
        \begin{array}{l}
            \nowidth{\sem{-} : \set{\Gamma \vdash {-} : \C} \to [\sem{\Gamma},\sem{\C}]} \\
            \sem{x} := \sem{x}^v \\
            \sem{t^\op} := \sem{t}^\op \\
            \sem{\ang{s,t}} := \ang{\sem{s},\sem{t}} \\
            \sem{\pi_1(p)} := \sem{p} \< \pi_1 \\
            \sem{\pi_2(p)} := \sem{p} \< \pi_2 \\
            \sem{s \cdot t} := \ang{\sem{s},\sem{t}} \< \textsf{eval} \\
            \sem{\lambda x.t(x)} := \Lambda(t)
        \end{array}
    \end{array}$
    &
    $\begin{array}{l}
        \begin{array}{l@{\,}l}
            \nowidth{\sem{-} : \set{[\Gamma] - \prop} \to [\sem{\Gamma}^\op \times \sem{\Gamma},\Set]} & \\
            \sem{\top} & := \lambda \n \gamma, \gamma. \top_\Set \\
            \sem{P \times Q} & := \ang{\sem{P}, \sem{Q}} \,; \times_\Set \\
            \sem{P \Rightarrow Q} & := \ang{\sem{P}, \sem{Q}} \,; \Rightarrow_\Set \\
            \sem{\hom_\C(s,t)} & := \ang{\sem{s}, \sem{t}} \,; \hom_\C \\
            \sem{\Endf{x:\C} P(\n x,x)} & := \lambda \n \gamma, \gamma. \Endf{x:\C} P(\n x,x,\n \gamma,\gamma) \\
            \sem{\Coendf{x:\C} P(\n x,x)} & := \lambda \n \gamma, \gamma. \Coendf{x:\C} P(\n x,x,\n \gamma,\gamma) \\
        \end{array} \\[5.0em]
        \begin{array}{l@{\,}l}
            \nowidth{\sem{-} : \set{{-} \propctx} \to [\sem{\Gamma}^\op \times \sem{\Gamma},\Set]} & \\
            \sem{\emptyctx} & := \lambda \n \gamma, \gamma. \top_\Set \\
            \sem{\Phi,P} & := \ang{\sem{\Phi}, \sem{P}} \,; \times_\Set \\
        \end{array}
    \end{array}$
  \end{tabular}
\end{adju}
\Description{Semantics for types, contexts, variables, terms, predicates and propositional contexts of the dinatural directed type theory.}
\caption{Semantics for the main judgments of dinatural directed type theory.}
\label{fig:semantics_rules}
\end{figure*}

\begin{theorem}[Soundness in dinatural transformations]~\agda[All.agda]
Each rule in \Cref{fig:syntax:entailments} is validated using the semantics in categories, functors, dipresheaves, dinatural transformations. Inference rules are interpreted by functions between sets of dinaturals; these are isomorphisms when double-lines appear. Moreover, every function is \emph{natural} in all the dipresheaves (both predicates and propositional contexts) that appear in the rule.
\end{theorem}
\noindent We unpack this theorem by validating and describing the intuition behind each rule, using semantic brackets $\sem{-}$ to indicate the semantic object denoted by each constructor.

\begin{itemize}[leftmargin=*]
\item { \textbf{Structural rules.}~\agda[Dinaturality/Structural.agda]} Rule $\Rulevar$ is interpreted as the dinatural which projects away the predicate $P$. Moreover, $\Ruleweaken$ and $\Rulecontract$ state that dinaturals always compose on the left with, respectively, the projections and the diagonal map in $\Set$.

\item { \textbf{Products.}~\agda[Dinaturality/Product.agda]} Dinaturals validate the interpretation of conjunction in $\Ruleprod$ via the pointwise product of dipresheaves in $\Set$; the bottom sequent indicates the product of sets of dinaturals.

\item { \textbf{Polarized implication.}~\agda[Dinaturality/Implication.agda]} Contrary to naturals and presheaves~\cite{Leinster2014basic}, dinaturals can be curried directly via the $\Ruleexp$ rule by currying each component of $\alpha$ in $\Set$. In the semantics, the metatheoretical operation \Cref{example:predicate_inversion} corresponds to swapping arguments in a dipresheaf.

\item { \textbf{Reindexing with functors as terms.}~\agda[Dinaturality/Reindexing.agda]} Dinaturals can always be ``reindexed'' by plugging functors in each index of the component, preserving dinaturality.

\item { \textbf{Cuts naturals-dinaturals.}}~\agda[Dinaturality/Cut.agda] The two restricted cut rules $\Rulecutdin,\Rulecutnat$ correspond precisely to \Cref{thm:nat_dinat_compose}. Intuitively, both rules are stated in such a way that the dipresheaf $P$ (in the middle of the composition) only contains \emph{natural} occurrences of variables. The use of $\Gamma$ in $\Phi,Q$ is unproblematic since one can suitably take the (co)end over $\Gamma$ to ``hide'' these variables and compose naturals together. Associativity, unitality and coherence in \CiteAppendix{A}{appendix:syntax} are immediate.

The dinatural-into-natural rule $\Rulecutnat$ essentially corresponds to vertical composition in $\Prof$ as a virtual equipment~\cite{New2023formal,Cruttwell2010unified}: in this type theory, however, contravariant occurrences $\n a,\n b$ are allowed to appear in the \emph{same} predicate $P(\n a,\n b)$, but in the double-categorical setting they must be split as $P(...,a),Q(\n a,b),R(\n b,...)$. Note that composing a natural $\alpha$ with the identity dinatural still yields a \emph{di}natural, since the cut rules always collapse the composition as in \autoref{thm:dinat_collapse}. %
\item { \textbf{Directed equality introduction.}~\agda[Dinaturality/Refl.agda]} The rule $\Rulerefl$ states reflexivity of directed equality, and is validated semantically by $\alpha_x(h) := \id_x$. Dinaturality holds since $\forall f:a\>b, f \< \id_b = \id_a \< f$.

\item { \textbf{Directed equality elimination.}~\agda[Dinaturality/J.agda]} This rule and its syntactic restriction comes precisely from \Cref{thm:dinatural_nat_hexagon}: in the bottom side of the isomorphism, the dipresheaf $P$ is curried on the left of the turnside \emph{but inverting the polarity of $a,b$}. This is precisely the propositional context of $\RuleJ$. Hence, the restriction behind $\RuleJ$ comes from the naturality of the bottom map. Explicitly, given a dinatural $h$, the dinatural $\J(h)$ is defined as follows for indices $a:\sem{\C},b:\sem{\Cop},x:\sem{\Gamma}$: \[J(h)_{abx}:= \lambda e, k. (\sem{\Phi}(\id_{b},e, \id_{x}, \id_{x}) \< h_{bx} \< \sem{P}(e, \id_{b},\id_{x},\id_{x}))(k).\] The computation rule clearly holds when $a = b = z$ and $e = \id_{z}$, without the need for dinaturality.

\item { \textbf{Dependent $\hom$ elimination.}}~\agda[Dinaturality/J-Iso.agda] As shown in \Cref{thm:j_isomorphism}, the fact that $\J$ is an isomorphism characterizes directed equality. In particular, dependent equality elimination is the $J(J^{-1}(\alpha)) = \alpha$ direction, which uses naturality in the proof just like the Yoneda lemma~\cite[4.2]{Leinster2014basic}. %

\item { \textbf{(Co)ends.}}~\agda[Dinaturality/Quantifiers.agda]
The rules for (co)ends $\Ruleend$ and $\Rulecoendfrobenius$ express an adjoint-like (up to the non-composition of dinaturals) correspondence $\Coendf{\A[\C]}\!\dashv\,\pi^*_{\A[\C]}\,\dashv\!\Endf{\A[\C]}$ between the weakening functor $\pi^*_{\A[\C]}\!:\![\C^\diamond,\Set] \!\>\![\A^\diamond\x\C^\diamond,\Set]$ and the functors $\Coendf{\A[\C]},\Endf{\A[\C]} : [\A^\diamond\x\C^\diamond,\Set]\!\>\![\C^\diamond,\Set]$ sending dipresheaves to their (co)end in $A$. Semantically, these are simply the bijective correspondences between (co)wedges and morphisms (out of) into (co)ends, but parameterized by an additional context $\Gamma$. Quantifiers in categorical logic typically have to satisfy additional requirements in order to faithfully model logical operations: the Beck-Chevalley condition~\cite[1.9.4]{Jacobs1999categorical} states that ``quantifiers commute with substitution'', and the Frobenius condition~\cite[1.9.12]{Jacobs1999categorical} logically corresponds to having an additional context $\Phi$ in rules for colimit-like connectives~\cite[3.4.4]{Jacobs1999categorical}, as in $\Rulecoendfrobenius$. We show these technical conditions in \CiteAppendix{F}{appendix:bc_frobenius}.
\end{itemize}
\begin{theorem}[Symmetry is not admissible]\label{thm:countermodel_symmetry}
The statement of symmetry of directed equality in \Cref{rem:failure_symmetry} is not admissible in the type theory.
\end{theorem}
\begin{proof}
Add to the signature the category $\I := \{0 \to 1\}$ with a unique non-invertible morphism. By soundness, the lack of symmetry in $\I$ implies that symmetry cannot be derived in general.
\end{proof}

The set of all dinaturals can be characterized as an end $\mathsf{Dinat}(P,Q) \iso \Endf{x:\C} P(\xnx) \Rightarrow Q(\nxx)$; we prove this in \CiteAppendix{D}{appendix:sec:other_derivations_coend_calculus}. We internalize this idea to show that full cut cannot be derived:

\begin{theorem}[No full cut]\label{thm:no-full-cut}
A cut rule where $\Phi,P,Q$ are fully unrestricted is \emph{not} admissible.
\begin{proof}
Assuming full cut, the adjoint formulation is equivalent to the rules in natural deduction-style of first-order logic, which allows one to derive the following map in the empty context:%
\[%
\begin{prooftree}
\infer0{[\,]\ \Endf{x:\C} P(x,\n x) \Rightarrow Q(\n x,x), \Endf{x:\C} Q(x,\n x) \Rightarrow R(\n x,x) \vdash \Endf{x:\C} P(x,\n x) \Rightarrow R(\n x,x)}
\end{prooftree}
\]
by soundness of the semantics, this corresponds to composing \emph{all} dinatural transformations.
\end{proof}
\end{theorem}

\section{Coend Calculus via Dinaturality}
\label{sec:examples_coend_calculus}

We show how the rules for directed equality and (co)ends can be used to give concise proofs with a distinctly logical flavor to several central theorems of category theory. The technique we use mirrors the way (co)end calculus is applied in practical settings (e.g.,~\cite{Boisseau2018what,Hinze2012kan,Roman2020open}) via a ``Yoneda-like'' series of \emph{natural} isomorphisms of sets: to prove that two objects $A, B\!:\!\C$ are isomorphic, one can assume to have a generic object $\Phi$ and then apply a series of isomorphisms of sets \emph{natural} in $\Phi$ to establish that $\C(\Phi,A) \iso \C(\Phi,B)$, from which $A \iso B$ follows by the fully faithfulness of the Yoneda embedding~\cite{Boisseau2018what,Leinster2014basic}. The same technique can be used to show that \emph{functors} are naturally isomorphic, as well as adjunctions, e.g., \Cref{thm:derivation_presheaves_closed,exa:right_kan}. {We now show our main examples, with additional derivations of (co)end calulus in \CiteAppendix{D}{appendix:sec:other_derivations_coend_calculus}, which use Yoneda with $\Phi$ on the right side instead.}

\begin{remark}[Yoneda technique and naturality]\agda[Dinaturality/NaturalityExample.agda]\label[remark]{rem:dinatu_iso} All rules given in previous sections are \emph{natural} in each of the dipresheaves involved. In the following series of examples no proof ever involves a ``dinatural isomorphism'', since it would not be possible to state the final isomorphism with cuts; natural isomorphisms between sets of \emph{di}naturals are only used as intermediate steps. We report in \CiteAppendix{G}{sec:appendix:yoneda_technique} a spelled-out example of this Yoneda technique in the equational theory by explicitly constructing the isomorphisms and using naturality of the adjoint-form rules (i.e., they commute with cuts).%
\end{remark}

\begin{example}[(co)Yoneda lemma]\label[example]{thm:derivation_coyoneda}
For any predicate/copresheaf $[x:\C]\ P(x) \prop$, and a predicate/copresheaf $[x:\C]\ \Phi(x) \propctx$ acting as generic context, the following derivations capture the Yoneda lemma~\cite[Thm. 1]{Loregian2021coend} (using the characterization of naturals as an end) and coYoneda lemma~\cite[III.7, Theorem 1]{MacLane1998categories} (i.e., presheaves are isomorphic to a weighted colimit of representables) %
\vspace{-0.9em}%
\LinkRuleyoneda%
\LinkRulecoyoneda%
\[
\begin{minipage}{0.51\textwidth}
\begin{prooftree}
   \infer[no rule]0{[a\!:\!\C]~\Phi(a) \vdash \Endf{x:\C}\ {\hom}_\C(a,\overline x) \Rightarrow P(x)}
   \infer[double]1[\Ruleend]{[a\!:\!\C,x\!:\!\C]~\Phi(a) \vdash \hom_\C(a,\overline x) \Rightarrow P(x)}
   \infer[double]1[\Ruleexp]{[a\!:\!\C,x\!:\!\C]\hspace{1.6pt}\ \hom_\C(\overline a,x), \Phi(a) \hspace{1.3pt} \vdash P(x)}
   \infer[double]1[\RuleJ]{[z:\C]~\Phi(z) \vdash P(z)}
\end{prooftree}
\end{minipage}
\begin{minipage}{0.51\textwidth}
\begin{prooftree}
   \infer[no rule]0{[a\!:\!\C]~\Coendf{x:\C}\ {\hom}_\C(\overline x,a) \x P(x) & \vdash \Phi(a)}
   \infer[double]1[\Rulecoendfrobenius]{[a\!:\!\C,x\!:\!\C]\ {\hom}_\C(\overline x,a) \x P(x) & \vdash \Phi(a)}
   \infer[double]1[\RuleJ]{[z:\C]\ P(z) & \vdash \Phi(z)}
\end{prooftree}
\end{minipage}
\]
\end{example}
\begin{example}[Presheaves are cartesian closed]
\label{thm:derivation_presheaves_closed}
For any $[\C]\ A,B,\Phi$, the following derivation shows that the internal $\hom$ in the category of presheaves and naturals~\cite[6.3.20]{Leinster2014basic} defined by $(A \Rightarrow B)(x) := \Nat(\hom(x,-)\x A,B)$ is indeed the correct one. We show here the tensor/hom adjunction: %
\begin{adju}[1]
\begin{prooftree}
\infer[no rule]0{[x:\C]~\Phi(x) \vdash & (A \Rightarrow B)(x) := \Nat(\hom_\C(x,-) \times A, B)}
\infer[no rule]1{ = & \Endf{y:\C}\ {\hom}_\C(x,\overline y) \x A(\overline y) \Rightarrow B(y)}
\infer[double]1[\Ruleend]{[x:\C,y:\C]~\Phi(x) \vdash & \hom_\C(x,\overline y) \x A(\overline y) \Rightarrow B(y)}
\infer[double]1[\Ruleexp]{[x:\C,y:\C]\ A(y) \x \hom_\C(\overline x,y) \x \Phi(x)  \vdash & B(y)}
\infer[double]1[\Rulecoendfrobenius]{[y:\C]\ A(y) \x \left(\Coendf{x:\C}\ {\hom}_\C(\overline x,y) \x \Phi(x) \right)  \vdash & B(y)}
\infer[double]1[\Rulecoyoneda]{[y:\C]\ A(y) \x \Phi(y) \vdash & B(y)}
\end{prooftree}
\end{adju}
\vspace{0.3em}
 We precompose with the $\Rulecoyoneda$ isomorphism given in \Cref{thm:derivation_coyoneda} (which is a \emph{natural} isomorphism). Note that $\RuleJ$ cannot be applied immediately since $y$ appears positively in context in $A(y)$, whereas it should be negative to identify it with $x$. The above derivation is a simple application of our rules via dinaturality, but it is unclear how it can be captured using the proarrow equipment approach of~\cite{New2023formal,Wood1982abstract} as an abstract property of $\Prof$, due to the repetition of variables $y,\n y$.
\end{example}

\begin{example}[Pointwise formula for right Kan extensions]
\label{exa:right_kan}
Using our rules, we give a logical proof that the functor $\Ran_F : [\C,\Set] \> [\D,\Set]$ sending (co)pre\-she\-aves to their Kan extensions along $F : \C \> \D$ computed via ends~\cite[2.3.6]{Loregian2021coend} is right adjoint to precomposition $(F\<-) : [\D,\Set] \> [\C,\Set]$. We again precompose with the $\Rulecoyoneda$ isomorphism, which we reindex implicitly with $F$. Note the similarity between this derivation and the argument given in~\cite[5.6.6]{Pitts1995categorical} to compute adjoints in a general doctrine. For any $[x:\C]\ P(x)$, $[y:\D]\ \Phi(y)$, a functor/term $F:\C\>\D$:%
\vspace{-0.3em}%
\[%
\begin{prooftree}
\infer[no rule]0{[y:\D]~\Phi(y) \vdash & (\Ran_F P)(y) := \Endf{x:\C}\ {\hom}_\D(y,F(\overline x)) \Rightarrow P(x)}
\infer[double]1[\Ruleend]{[x:\C,y:\D]~\Phi(y) \vdash & \hom_\D(y,F(\overline x)) \Rightarrow P(x)}
\infer[double]1[\Ruleexp]{[x:\C,y:\D]\ {\hom}_\D(\overline y,F(x)) \x \Phi(y) \vdash & P(x)}
\infer[double]1[\Rulecoendfrobenius]{[x:\C]~\Coendf{y:\D}\ {\hom}_\D(\overline y,F(x)) \x \Phi(y) \vdash & P(x)}
\infer[double]1[\Rulecoyoneda]{[y:\C]~\Phi(F(x)) \vdash & P(x)}
\end{prooftree}
\]
\end{example}%
\vspace{-0.7em}%
\begin{example}[Fubini rule for ends]
\label{ex:fubini}%
For convenience we only show the case for ends. %
For $[\,]\  \Phi \propctx$ in the empty context (i.e., just an object $\sem{\Phi}:\Set$) and $[\C,\D]\ P\prop$ the following are all equivalent
thanks to the fact that certain structural properties of contexts hold by cartesianness of $\Cat$.
\[
\begin{minipage}{0.5\textwidth}
\begin{prooftree}
\infer[no rule]0{[\,]~\Phi \vdash & \Endf{x:\C}~\Endf{y:\D} P(\nxx,\overline y,y)}
\infer[double]1[\Ruleend]{[x:\C]~\Phi \vdash & \Endf{y:\D} P(\nxx,\overline y,y)}
\infer[double]1[\Ruleend]{[x:\C,y:\D]~\Phi \vdash & P(\nxx,\overline y,y)}
\infer[double]1[(\textsf{structural property})]{[y:\D,x:\C]~\Phi \vdash & P(\nxx,\overline y,y)}
\end{prooftree}
\end{minipage}
\begin{minipage}{0.5\textwidth}
\begin{prooftree}
\hypo{\cdots}
\infer[double]1[(\textsf{structural property})]{[p:\C\times\D]~\Phi \vdash & P(\overline p,p)}
\infer[double]1[\Ruleend]{[y:\D]~\Phi \vdash & \Endf{x:\C} P(\nxx,\overline y,y)}
\infer[double]1[\Ruleend]{[\,]~\Phi \vdash & \Endf{y:\D}~\Endf{x:\C} P(\nxx,\overline y,y)}
\infer[double]1[\Ruleend]{[\,]~\Phi \vdash & \Endf{p:\C\x\D} P(\nxx,\overline y,y)}
\end{prooftree}
\end{minipage}
\]
\end{example}
\vspace{-0.6em}%
\begin{example}[$\Rightarrow$ resp. limits]
  Ends are limits \cite{Loregian2021coend}, and functors $-\!\Rightarrow\!- : \Set^\op \x \Set \> \Set$ preserve them (ends/limits in $\Set^\op$, i.e., coends/colimits in $\Set$).
  For $[]\ \Phi \propctx,[]\ Q\prop, [\C]\ P \prop$:
  \[
  \begin{minipage}{0.5\textwidth}
  \begin{prooftree}
  \infer[no rule]0{[\,]\ \Phi \vdash & Q \Rightarrow \Endf{x:\C} P(\nxx)}
  \infer[double]1[\Ruleexp]{[\,]\ Q, \Phi \vdash & \Endf{x:\C} P(\nxx)}
  \infer[double]1[\Ruleend]{[x:\C]\ Q, \Phi \vdash & P(\nxx)}
  \infer[double]1[\Ruleexp]{[x:\C]\ \Phi \vdash & Q \Rightarrow P(\nxx)}
  \infer[double]1[\Ruleend]{[\,]\ \Phi \vdash & \Endf{x:\C} \left(Q \Rightarrow P(\nxx)\right)}
  \end{prooftree}
  \end{minipage}
  \begin{minipage}{0.5\textwidth}
  \begin{prooftree}
  \infer[no rule]0{[\,]\ \Phi \vdash (\Coendf{x:\C} P(\nxx) ) \Rightarrow Q}
  \infer[double]1[\Ruleexp]{[\,]\ (\Coendf{x:\C} P(\nxx)), \Phi \vdash Q}
  \infer[double]1[\Rulecoendfrobenius]{[x:\C]\ P(\nxx), \Phi \vdash & Q}
  \infer[double]1[\Ruleexp]{[x:\C]\ \Phi \vdash & P(\xnx) \Rightarrow Q}
  \infer[double]1[\Ruleend]{[\,]\ \Phi \vdash & \Endf{x:\C} P(\xnx) \Rightarrow Q}
  \end{prooftree}
  \end{minipage}
  \]
\end{example}
\vspace{-0.3em}%
\section{Conclusions and Future Work}\label{sec:conclusion}

In this paper we showed how dinaturality is the key notion to give a simple and natural description to a first-order directed type theory where types are interpreted as (1-)categories and directed equality as $\hom$-functors. Our type theory is powerful enough to express theorems about directed equality in a straightforward way, and to give a distinctly logical interpretation to well-known theorems in category theory by reinterpreting them under the light of directed type theory.

\textbf{Dinaturality.} The compositionality problem of dinatural transformations is a long-standing and famously difficult problem~\cite{Santamaria2019towards}, which both the category theory and computer science communities have relatively left unexplored since their introduction in the 1970s~\cite{Eilenberg1966generalization,Dubuc1970dinatural}. Our work gives a concrete motivation to further investigate this more than 50-years old mystery by connecting it to directed type theory. We conjecture that this connection could possibly hint to a deeper \emph{directed homotopical} reason~\cite{Fajstrup2016directed,Grandis2009directed} for why dinaturals fail to compose.
Strong dinaturals~\cite{Neumann2023paranatural,Pare1998dinatural} are one possible approach to deal with non-compositionality, but they lack in expressivity, e.g., they are not closed in general~\cite{Uustalu2010note}. Following \Cref{thm:dinaturals_groupoid_compose}, this non-compositionality is intrinsic to the directed proof-relevant setting, i.e., non-groupoidal categories. We leave investigating the relation between dinaturality and geometric models of $(\infty,1)$-categories in the spirit of~\cite{Riehl2017type,Gratzer2024directed,Weaver2020constructive} for future work.

\textbf{Type dependency.} Our treatment of directed equality via dinaturality is a first step towards understanding the precise interplay of polarity, directedness and variance in fully dependent \MLTT, especially with respect to how polarity of variables is influenced by their appearance in types, which we conjecture to be particularly non-trivial.

\textbf{Initiality.} The syntactic system presented in this paper could be axiomatized into a suitable initial object in a category of models that captures the behavior of variables in dinaturals and naturals (e.g., as in~\cite{Santamaria2019towards}): one possible approach could be to abstractly consider two classes of maps (dinaturals, naturals) and requiring such maps to interact as in \Cref{thm:nat_dinat_compose}.

\textbf{Doctrines.} All of our results can be specialized in the category of posets $\Pos$ rather than $\Cat$, where dinaturals compose trivially and our work provides a ``logic of posets'', captured via a bona fide doctrine, at the cost of trivializing (co)ends with (co)products. This posetal case could be axiomatized in the style of the doctrinal approach~\cite{Jacobs1999categorical,Maietti2015unifying}, with a notion of \emph{directed doctrine} capturing the roles pl\-ay\-ed by va\-ri\-an\-ce, the $-^\op$ involution, and (di)naturality. This would allow our syntactic rules to be organized in a well-known structure, with a suitable initiality result. %

\textbf{Internalizing Yoneda.} The Yoneda technique for isomorphisms follows from ``manually'' using naturality of isomorphisms in the equational theory. One could also get this naturality for free by making the theory second-order with a universe of propositions $\Set$ and adding a directed univalence statement $\hom_\Set(A,B) \iso A \Rightarrow B$ (as in~\cite{Altenkirch2024synthetic,Gratzer2024directed,Weaver2020constructive}): this would allow for implication to be represented as a directed equality, contractible with $\RuleJ$, and ``synthetically'' reproduce the same argument as in \Cref{ex:internal_dinaturality} by quantifying over all predicates involved.

\textbf{Higher (co)end calculus.} There are other conceptual examples of coend calculus which have not yet been interpreted in terms of directed equality: for instance, one should be able to express that composition maps exist \emph{for all} categories $\C : \Cat$, where this quantification can be expressed via a suitable pseudo-end in $\Cat$~\cite[7.1]{Loregian2021coend}; similarly, the category of elements of a functor, reminiscent of a $\Sigma$-type, can be given as the pseudo-coend $\El(F) \iso \Coendf{c:\C} c/\C \x F(c)$, where $c/\C$ is the coslice category and $F(c)$ is seen as a discrete category~\cite[4.2.2]{Loregian2021coend}. These examples could be captured by considering the category of small categories $\Cat$ as a suitable universe of types~\cite{Hofmann1997syntax}.

\textbf{Enrichment.} We do not rely on specific properties of $\Set$ (viewed as the base of enrichment of $\Cat$), other than cartesian closedness to have propositional implication/conjunction and the existence of (co)limits to express (co)ends. We conjecture that our analysis of dinaturals can be developed in more generality by taking enriched categories (over a sufficiently structured base of enrichment) as types, rather than simply categories (enriched over $\Set$).

\textbf{Implementation.} We remark how an implementation of the metatheory of our type-theoretical system in a proof assistant is non-trivial, since one has to push $-^\op$ down into connectives and ensure that $({X^\op})^\op \equiv X$ everywhere in the syntax: in types, contexts, terms, predicates, propositional contexts. This could be tackled in practice by using QITs~\cite{Altenkirch2016type} and the \texttt{-\hspace{0.1pt}-rewriting} feature of Agda~\cite{Cockx2021taming} to simplify $\op$ whenever necessary. Another solution would be to have $-^\op$ only at the level of base types, and then derive $-^\op$ as a metatheoretical operation on full types; this has the disadvantage that $-^\op$ is not a primitve type former that one can explicitly manipulate in the syntax.
\begin{acks}
The authors thank the reviewers for their detailed suggestions and {Pawe\l} Soboci\'nski for invaluable feedback on the presentation of this work. Loregian was supported by the Estonian Research Council grant PRG1210. Veltri was supported by the Estonian Research Council grant PSG749.
\end{acks}

\appendix

\clearpage

\section{Additional judgments for first-order dinatural directed type theory}
\label{appendix:syntax}

The rules to formally capture the variance of variables in predicates is given in \Cref{fig:syntax:pos_neg_conditions_formulas_both}, with the accompanying definition of unused variables in terms in \Cref{fig:syntax:unused_terms}.

We show in \Cref{sec:appendix:other_equational} the full rules in the equational theory regarding cuts. In \Cref{sec:appendix:end_bidirectional_example} we explicitly illustrate what a bidirectional rule in ``adjoint-form'' looks like, by showing the two directions, the isomorphisms and the naturality conditions.

  \begin{figure}[H]
  \[
  \begin{array}{c}
\fbox{$\Gamma \ni x:\A \mathsf{\ unused\ in\,}t:\C$}
\quad \inferenceTwo{\Gamma \ni x : \C}{x \not = y}{\Gamma \ni y:\C \mathsf{\ unused\ in\,} x:\C}
\quad \inference{\phantom{\Gamma\!\!\!\vdash s : \C}}{\Gamma \ni x:\A \mathsf{\ unused\ in\,} ! : \top}
\\[1em]
\quad \inference{\Gamma \ni x:\A \mathsf{\ unused\ in\,} t : \dom(f)}{\Gamma \ni x:\A \mathsf{\ unused\ in\,} f(t) : \cod(f)}
\quad \inference{\Gamma \ni x:\A \mathsf{\ unused\ in\,} t : \C}{\Gamma^\op \ni x:\Aop \mathsf{\ unused\ in\,} t^\op : \C^\op}
\\[1em]
\quad \inferenceTwo{\Gamma \ni x:\A \mathsf{\ unused\ in\,} s : \C}{\Gamma \ni x:\A \mathsf{\ unused\ in\,} t : \D}{\Gamma \ni x:\A \mathsf{\ unused\ in\,} \ang{s,t} : \C \times \D}
\\[1em]
\quad \inference{\Gamma \ni x:\A \mathsf{\ unused\ in\,} p : \C \times \D}{\Gamma \ni x:\A \mathsf{\ unused\ in\,} \pi_1(p) : \C}
\quad \inference{\Gamma \ni x:\A \mathsf{\ unused\ in\,} p : \C \times \D}{\Gamma \ni x:\A \mathsf{\ unused\ in\,} \pi_2(p) : \D}
\\[1em]
\quad \inferenceTwo{\Gamma \ni x:\A \mathsf{\ unused\ in\,} s : [\C,\D]}{\Gamma \ni x:\A \mathsf{\ unused\ in\,}t : \C}{\Gamma \ni x:\A \mathsf{\ unused\ in\,} s \cdot t : \D}
\quad \inference{\Gamma,z:\C \ni x:\A \mathsf{\ unused\ in\,} t(z) : \D}{\Gamma \ni x:\A \mathsf{\ unused\ in\,} \lambda z.t(z) : [\C,\D]}
  \end{array}
  \]
  \caption{Syntax of first-order dinatural directed type theory -- syntactically unused variables in terms.}
  \label{fig:syntax:unused_terms}
  \[
  \begin{array}{c}
\fbox{$\Gamma \ni x:\A \mathsf{\ cov\ in\,}\varphi$}
\\[1em]
\inferenceTwo{\Gamma \ni x:\A \mathsf{\ cov\ in\,} P}{\Gamma \ni x:\A \mathsf{\ cov\ in\,}Q}{\Gamma \ni x:\A \mathsf{\ cov\ in\,}P \times Q}
\quad \inferenceTwo{\Gamma^\op \ni x:\A^\op \mathsf{\ cov\ in\,} P}{\Gamma \ni x:\A \mathsf{\ cov\ in\,}Q}{\Gamma \ni x:\A \mathsf{\ cov\ in\,}P \Rightarrow Q}
\\[1em]
\inference{\phantom{\Gamma \ni x:\A \mathsf{\ cov\ in\,}\varphi}}{\Gamma \ni x:\A \mathsf{\ cov\ in\,}\top}
\quad \inference{\Gamma,y:\C \ni x:\A \mathsf{\ cov\ in\,}\varphi}{\Gamma \ni x:\A \mathsf{\ cov\ in\,}\Coendf{y:\C} \varphi(\n y,y)}
\quad \inference{\Gamma,y:\C \ni x:\A \mathsf{\ cov\ in\,}\varphi}{\Gamma \ni x:\A \mathsf{\ cov\ in\,}\Endf{y:\C} \varphi(\n y,y)}
\\[1.5em]
\quad \inferenceTwo{\Gamma^\textsf{op},\Gamma \ni \n x:\Aop \mathsf{\ unused\ in\,} s : \Cop}{\Gamma^\textsf{op},\Gamma \ni \n x:\A^\textsf{op} \mathsf{\ unused\ in\,} t : \C}{\Gamma \ni x:\A \mathsf{\ cov\ in\,}\hom_\C(s, t)}
\\[1.5em]
\quad \inferenceTwo{\Gamma^\textsf{op},\Gamma \ni \n x:\Aop \mathsf{\ unused\ in\,} s : \textsf{neg}(P)^\op}{\Gamma^\textsf{op},\Gamma \ni \n x:\A^\textsf{op} \mathsf{\ unused\ in\,} t : \textsf{pos}(P)}{\Gamma \ni x:\A \mathsf{\ cov\ in\,}P(s \mid t)}
\\[1.5em]
\inferenceThree{\A = \A'}{\varphi = \varphi'}{\Gamma \ni x:\A \mathsf{\ cov\ in\,} \varphi}{\Gamma \ni x:\A' \mathsf{\ cov\ in\,} \varphi'}
\\[1em]
\fbox{$\Gamma \ni x:\A \mathsf{\ contra\ in\,}\varphi$}
\\[1em]
\quad
\inference{\Gamma^\op \ni x:\A^\op \mathsf{\ contra\ in\,} \varphi^\op}{\Gamma \ni x:\A \mathsf{\ contra\ in\,} \varphi}
\end{array}
\]
\Description{This figure shows the syntax rules for determining whether a variable is covariant or contravariant in a formula, as well as rules for determining whether a variable is unused in a term. The rules for unused variables include cases for variables in product types, function types, and other constructs. The covariant and contravariant rules include cases for logical connectives, quantifiers, and hom-sets.}
\caption{Syntax of first-order dinatural directed type theory -- syntactic conditions for covariant/contravariant variables in predicates.}
\label{fig:syntax:pos_neg_conditions_formulas_both}
\end{figure}

\begin{figure}
\[
\begin{array}{c}
  \hspace{-4.1em}{\fbox{$[\Gamma]\ \Phi \vdash \alpha = \beta : P$}} \quad \cdots
  \\[1em]
  \begin{prooftree}
    \infer[no rule]0{\Gamma \textsf{ unused in }P\textsf{ and }Q}
    \infer[no rule]1{[a:\Delta^\op,b:\Delta,x:\Gamma]\ \takespace{\,k:Q(a,b),}{} \Phi(a,b,\nxx) & \vdash \takespace[l]{\gamma[\alpha]}{\alpha} : P(a,b)}
     \infer[no rule]1{[z:\Delta,x:\Gamma]\ k:\takespace{Q(a,b)}{P(\nzz)},\takespace[l]{\Phi(a,b,\nxx)}{\Phi(\nzz,\nxx)} & \vdash \takespace[l]{\gamma[\alpha]}{\gamma[k]} : Q(\nzz)}
    \infer[no rule]1{[a:\Delta^\op,b:\Delta,x:\Gamma]\ k:Q(a,b),\takespace[l]{\Phi(\n b,\n a,\nxx)}{\Phi(\n b,\n a,\nxx)} & \vdash \takespace[l]{\gamma[\beta]}{\beta[k]} : R(a,b,\nx,x)}
  \infer1[\Rulecutassoc]{[z:\Delta,x:\Gamma]\ \takespace{\,k:Q(a,b),}{} \Phi(\nzz,\nxx) & \vdash (\beta[\gamma])[\alpha] = \beta[\gamma[\alpha]] : R(\nzz,\nx,x)}
  \end{prooftree}
  \\[3.5em]
  \begin{prooftree}
    \infer[no rule]0{\Gamma \textsf{ unused in }P, \quad \Delta \textsf{ unused in }\Phi}
    \infer[no rule]1{[a:\Delta]\ \takespace{k:P(a),\Phi}{k:P(a),\Phi} & \vdash \takespace[l]{\gamma[\beta]}{\alpha[k]} : Q(a)}
    \infer[no rule]1{[a:\Delta]\ \takespace{k:Q(a),\Phi}{r:Q(a),\Phi} & \vdash \takespace[l]{\gamma[\beta]}{\beta[r]} : R(a)}
  \infer1[\Rulenatcutcoherence]{[a:\Delta]\ \takespace{k:P(a),\Phi}{k:P(a),\Phi} & \vdash \beta[\alpha]^\textsf{cut-nat} = \beta[\alpha]^\textsf{cut-din} : Q(a)}
  \end{prooftree}
  \\[3.5em]
  \begin{prooftree}
    \infer[no rule]0{\Gamma \textsf{ unused in }P}
    \infer[no rule]1{[z:\Delta,x:\Gamma]\ k:P(\n z,z),\Phi(\n z,z,\n x,x) & \vdash k : P(\n z,z)}
    \infer[no rule]1{[a:\Delta^\op,b:\Delta,x:\Gamma]\ \phantom{k:}P(a,b),\Phi(\n b,\n a,\n x,x) & \vdash \alpha : Q(a,b)}
  \infer1[\Rulecutnatidl]{[z:\Delta,x:\Gamma]\ \phantom{k:}P(\n z,z),\Phi(\n z,z,\n x,x) & \vdash \alpha[k] = \alpha^{a,b \mapsto z} : Q(\n z,z)}
  \end{prooftree}
  \\[2.4em]
  \begin{prooftree}
    \infer[no rule]0{\Gamma \textsf{ unused in }Q}
    \infer[no rule]1{[z:\Delta,x:\Gamma]\ \Phi(\n z,z,\n x,x) & \vdash \alpha : Q(\n z,z)}
    \infer[no rule]1{[a:\Delta^\op,b:\Delta,x:\Gamma]\ k:P(a,b),\Phi(\n b,\n a,\n x,x) & \vdash k : P(a,b)}
  \infer1[\Rulecutnatidr]{[z:\Delta,x:\Gamma]\ \Phi(\n z,z,\n x,x) & \vdash k[\alpha] = \alpha : Q(\n z,z)}
  \end{prooftree}
  \\[2.5em]
  \begin{prooftree}
    \infer[no rule]0{\Gamma \textsf{ unused in }P}
    \infer[no rule]1{[a:\Delta^\op,b:\Delta,x:\Gamma]\ k:P(a,b),\Phi(a,b,\n x,x) & \vdash k : P(a,b)}
    \infer[no rule]1{[z:\Delta,x:\Gamma]\ \takespace[r]{k:P(\n x,x)}{P(\n z,z)},\Phi(\n z,z,\n x,x) & \vdash \alpha : Q(\n z,z)}
  \infer1[\Rulecutdinidl]{[z:\Delta,x:\Gamma]\ \takespace[r]{k:P(\n x,x)}{P(\n z,z)},\Phi(\n z,z,\n x,x) & \vdash \alpha[k] = \alpha : Q(\n z,z)}
  \end{prooftree}
  \\[2.4em]
  \begin{prooftree}
    \infer[no rule]0{\Gamma \textsf{ unused in }Q}
    \infer[no rule]1{[a:\Delta^\op,b:\Delta,x:\Gamma]\ \Phi(a,b,\n x,x) & \vdash \alpha : Q(a,b)}
    \infer[no rule]1{[z:\Delta,x:\Gamma]\ k:Q(\n z,z),\Phi(\n z,z,\n x,x) & \vdash k : Q(\n z,z)}
  \infer1[\Rulecutdinidr]{[z:\Delta,x:\Gamma]\ \Phi(\n z,z,\n x,x) & \vdash k[\alpha] = \alpha^{a,b\mapsto z} : Q(\n z,z)}
  \end{prooftree}
\end{array}
\]
\Description{This figure shows the equational rules for cuts in first-order directed type theory. The rules include associativity for natural-dinatural-natural cuts, coherence for cuts between naturals, and left and right identities for cut. Each rule is presented in a prooftree format, illustrating the derivation steps leading to the final equality statement.}
\caption{Syntax of first-order directed type theory -- Equational rules for cuts: associativity for natural-dinatural-natural cuts, coherence for cuts between naturals, left and right identities for cut.}
\label{sec:appendix:other_equational}
\LinkRulecutassoc
\LinkRulenatcutcoherence
\LinkRulecutnatidl
\LinkRulecutnatidr
\LinkRulecutdinidl
\LinkRulecutdinidr
\end{figure}

\begin{figure}
\[
\begin{array}{c}
{\fbox{$[\Gamma]\ \Phi \vdash \alpha : P$}}
\quad \cdots
\\[1em]
\begin{prooftree}
\infer[no rule]0{[x: \C, \Gamma]\ \Phi & \vdash \alpha : P(\nxx)}
\infer1[\Ruleend]{\takespace{[x: \C, \Gamma]}{[\Gamma]}\ {\Phi} & \vdash \mathsf{end}(\alpha) : \Endf{x:\C} P(\nxx)}
\end{prooftree}
\quad
\begin{prooftree}
\infer[no rule]0{\takespace{[x: \C, \Gamma]}{[\Gamma]}\ \Phi & \vdash \alpha : \Endf{x:\C} P(\nxx)}
\infer1[\Ruleendinv]{[x: \C, \Gamma]\ \Phi & \vdash \mathsf{end}^{-1}(\alpha) : P(\nxx)}
\end{prooftree}
\\[2.5em]
  {\fbox{$[\Gamma]\ \Phi \vdash \alpha = \beta : P$}} \quad \cdots
\\[1em]
  \begin{prooftree}
  \infer[no rule]0{[x: \C, \Gamma]\ \Phi & \vdash \alpha : P(\nxx)}
  \infer1{[x: \C, \Gamma]\ {\Phi} & \vdash \mathsf{end}^{-1}(\mathsf{end}(\alpha)) = \alpha : P(\nxx)}
  \end{prooftree}
  \quad
  \begin{prooftree}
  \infer[no rule]0{[\Gamma]\ \Phi & \vdash \alpha : \Endf{x:\C} P(\nxx)}
  \infer1{[\Gamma]\ {\Phi} & \vdash \mathsf{end}(\mathsf{end}^{-1}(\alpha)) = \alpha : \Endf{x:\C} P(\nxx)}
  \end{prooftree}
\\[2.5em]
\begin{prooftree}
\infer[no rule]0{{[z:\Delta,\Gamma]}\ \takespace{k:Q(a,b),\Phi(\n a,\n b)}{\Phi(\n z,z)} & \vdash \beta : Q(\n z,z)}
\infer[no rule]1{{[a:\Delta^\op,b:\Delta,x:\C,\Gamma]}\ k:Q(a,b),\Phi(\n a,\n b) & \vdash \alpha : P(\n x,x,a,b)}
\infer1[\Ruleendnatl]{[x:\C,z:\Delta,\Gamma]\ \takespace{k:Q(a,b),\Phi(\n a,\n b)}{\Phi(\n z,z)} & \vdash \mathsf{end}(\alpha)[\beta] = \mathsf{end}(\alpha[\beta]) : \Endf{x:\C} P(\n x,x,\n z,z)}
\end{prooftree}
\\[2.5em]
\begin{prooftree}
\infer[no rule]0{{[a:\Delta^\op,b:\Delta,\Gamma]}\ \takespace{k:Q(\n z,z),\Phi(\n z,z)}{\Phi(a,b)} & \vdash \beta : Q(a,b)}
\infer[no rule]1{{[x:\C,z:\Delta,\Gamma]}\ k:Q(\n z,z),\Phi(\n z,z) & \vdash \alpha : P(\n x,x,\n z,z)}
\infer1[\Ruleenddinl]{[x: \C,z:\Delta,\Gamma]\ \takespace{k:Q(\n z,z),\Phi(\n z,z)}{\Phi(\n z,z)} & \vdash \mathsf{end}(\alpha)[\beta] = \mathsf{end}(\alpha[\beta]) : \Endf{x:\C} P(\n x,x,\n z,z)}
\end{prooftree}
\\[2.5em]
\begin{prooftree}
\infer[no rule]0{{[x_1\!:\!\Cop,x_2\!:\!\C,a\!:\!\Delta^\op,b\!:\!\Delta]}\ P(x_1,x_2,a,b) & \vdash \beta : P'(x_1,x_2,a,b)}
\infer[no rule]1{{[x:\C,z:\Delta]}\ \Phi(\nzz) & \vdash \alpha : P(\n x,x,\nzz)}
\infer1[\Ruleenddinr]{[z:\Delta]\ {\Phi(\n z,z)} & \vdash \mathsf{end}_F(\beta)[\mathsf{end}(\alpha)] = \mathsf{end}(\beta[\alpha]) : \Endf{x:\C} P'(\n x,x,\n z,z)}
\end{prooftree}
\\[2.5em]
\begin{prooftree}
\infer[no rule]0{{[x_1\!:\!\Cop,x_2\!:\!\C,z\!:\!\Delta]}\ Q(x_1,x_2,\n z,z) & \vdash \beta : P'(x_1,x_2,\n z,z)}
\infer[no rule]1{{[x:\C,a:\Delta^\op,b:\Delta]}\ \Phi(a,b) & \vdash \alpha : Q(\n x,x,a,b)}
\infer1[\Ruleendnatr]{[a:\Delta^\op,b:\Delta]\ {\Phi(a,b)} & \vdash \mathsf{end}_F(\beta)[\mathsf{end}(\alpha)] = \mathsf{end}(\beta[\alpha]) : \Endf{x:\C} P'(\n x,x,a,b)}
\end{prooftree}
\end{array}
\]
\Description{This figure shows the bidirectional rules for ends in first-order directed type theory. It includes the introduction and elimination rules for ends, as well as the associated isomorphisms and naturality conditions. Each rule is presented in a prooftree format, illustrating the derivation steps leading to the final judgment.}
  \caption{Syntax of first-order directed type theory -- Explicit description of a rule in ``adjoint-form'', e.g., for ends: rules, isomorphisms, and naturality in $\Phi,P$ for $\Ruleend$. Naturality in $P$ uses functoriality in \Cref{sec:appendix:functoriality_ends}.}
  \label{sec:appendix:end_bidirectional_example}
  \LinkRuleendinv
  \LinkRuleendnatl
  \LinkRuleenddinl
  \LinkRuleendnatr
  \LinkRuleenddinr
\end{figure}

\begin{figure}
\[
\begin{prooftree}
\infer[no rule]0{{[a:\Cop,b:\C,\Gamma]}\ k:P(a,b) & \vdash \alpha[k] : P(a,b)}
\infer1{[x: \C, \Gamma]\ p:\End{x:\C} P(\nxx) & \vdash \mathsf{end}_F(\alpha) := \mathsf{end}(\alpha[\mathsf{end}^{-1}(p)]) }
\infer[no rule]1{& = \mathsf{end}(\alpha)[\mathsf{end}^{-1}(p)] : \End{x:\C} P(\nxx)}
\end{prooftree}
\]
\Description{This figure illustrates the functoriality of ends for natural transformations in first-order directed type theory.}
\caption{Functoriality of ends for \emph{naturals} by precomposing with the counit of $\Ruleend$.}
\label{sec:appendix:functoriality_ends}

\end{figure}

\clearpage

\section{Directed type theory, other derivations}
\label{sec:appendix:dtt_other_derivations}

\begin{example}[Contractibility of singletons]
\label{thm:contractibility}
Recall the derivation for existence of singletons:
\[
\begin{prooftree}
\infer0{[ ]\ \cdot & \vdash \mathsf{end}(\mathsf{coend}^{-1}(k)[\refl_x]) : \End{x:\Cop} \Coend{y:\C} \hom_\C(x,y)}
\end{prooftree}
\]
We now show that singletons are actually contractible: assuming another element $k:\Coendf{y:\C} \hom_\C(x,y)$, we show that it is equal to the the one given in the first derivation (after removing the universal quantifier). Note that the right-hand side must cut away the hypothesis $k$ by precomposing with the constant dinatural $!$. In the bottom of the derivation we use the fact that the isomorphisms for coends are natural with respect to the cut rules of our type theory. In the top of the derivation we omit for simplicity an application of associativity of cuts and uniqueness of $!$ which is used to remove the application of $J^{-1}$.

\[
\begin{prooftree}
\infer0[\seqrule{refl}]{[z:\C]\ \emptyctx \vdash \mathsf{coend}^{-1}(k)[\refl_z] = \mathsf{coend}^{-1}(k)[\refl_z] : \cdots}
\infer1[\seqrule{$!$-unique}+\Rulecutassoc]{[z:\C]\ \emptyctx \vdash \mathsf{coend}^{-1}(k)[\refl_z] = \mathsf{coend}^{-1}(k)[\refl_z][!][\refl_z] : \cdots}
\infer1[\RuleJeq]{[x:\Cop,y:\C]\ k:\hom_\C(x,y) & \vdash \mathsf{coend}^{-1}(k) = \mathsf{coend}^{-1}(k)[\refl_x][!] : \cdots}
\infer1[\seqrule{$!$-unique}]{ & \takespace[r]{\ \mathsf{coend}^{-1}(s)}{\cdots} = \mathsf{coend}^{-1}(k)[\refl_x][\mathsf{coend}^{-1}(!)] : \cdots}
\infer1[\seqrule{coend-natural}]{[x:\Cop,y:\C]\ k:\hom_\C(x,y) & \vdash \mathsf{coend}^{-1}(s) = \mathsf{coend}^{-1}(\mathsf{coend}^{-1}(k)[\refl_x][!]) : \cdots}
\infer1[\Rulecoendfrobenius]{[x:\Cop]\ k:\Coend{y:\C} \hom_\C(x,y) & \vdash k = \mathsf{coend}^{-1}(k)[\refl_x][!] : \Coend{y:\C} \hom_\C(x,y)}
\end{prooftree}
\]
\end{example}

\begin{example}[Internal naturality for natural transformations]
  \label{ex:internal_naturality_terms}
  We show that naturality for natural transformations between terms, expressed as ends \cite[1.4.1]{Loregian2021coend}, holds internally by directed equality elimination. Given terms $\C \vdash F,G : \D$, we use the counit of $\Ruleend$ to extract the family of $\hom$-sets. We first explicitly show the rules used to construct the two sides of a naturality square:
  \[
  \begin{prooftree}
    \hypo{[a:\Cop, b:\C]\,f:\hom_\C(a,b), \eta : \Endf{x:\C} \hom_\D{F(\nx),G(x)} \vdash \eta : \Endf{x:\C} \hom(F(\n x), G(x))}
    \infer1[\Ruleendinv]{[a:\Cop, b:\C, x:\C]\, f:\hom_\C(a,b), \eta : ... \vdash \mathsf{end}^{-1}(\eta) : \hom(F(\n x), G(x))}
    \infer1[\Rulereindex]{[a:\Cop, b:\C]\, f:\hom_\C(a,b), \eta : ... \vdash \Delta^*(\mathsf{end}^{-1}(\eta)) : \hom(F(a), G(\n a))}
    \infer1[\Rulecutnat]{[a:\Cop, b:\C]\, f:\hom_\C(a,b), \eta : ... \vdash \comp[\Delta^*(\mathsf{end}^{-1}(\eta)),\congr_G[f]] : \hom(F(a), G(b))}
    \end{prooftree}
    \]
  where $\Delta^*$ is the reindexing functor which collapses $a,x$ to a single variable $a$, and $\Rulecutnat$ is used to apply $\comp$ on $\congr$ for $G$. This composition can be done since both $\congr$ and $\comp$ have the correct naturality shape that allows for $\Rulecutnat$ to be applied.

  The other derivation is obtained similarly:

  \[
  \begin{prooftree}
    \hypo{[a:\Cop, b:\C]\,f:\hom_\C(a,b), \eta : \Endf{x:\C} \hom_\D{F(\nx),G(x)} \vdash \eta : \Endf{x:\C} \hom(F(\n x), G(x))}
    \infer1[\Ruleendinv]{[a:\Cop, b:\C, x:\C]\, f:\hom_\C(a,b), \eta : ... \vdash \mathsf{end}^{-1}(\eta) : \hom(F(\n x), G(x))}
    \infer1[\Rulereindex]{[a:\Cop, b:\C]\, f:\hom_\C(a,b), \eta : ... \vdash \Delta^*(\mathsf{end}^{-1}(\eta)) : \hom(F(\n b), G(b))}
    \infer1[\Rulecutnat]{[a:\Cop, b:\C]\, f:\hom_\C(a,b), \eta : ... \vdash \comp[\congr_F[f],\Delta^*(\mathsf{end}^{-1}(\eta))] : \hom(F(a), G(b))}
    \end{prooftree}
  \]

  We show that the two maps constructed, corresponding to the two sides of a naturality square, are equal using directed equality elimination; let $K := \Delta^*(\mathsf{end}^{-1}(\eta))$:

  \[
  \begin{prooftree}
    \infer0{[z:\C]\, ... \vdash K = K : \hom(F(\n z), G(z))}
    \infer1[\RuleJcomp]{[z:\C]\, ... \vdash \comp[\refl_z,K] = \comp[K,\refl_z] : \hom(F(\n z), G(z))}
    \infer1[\RuleJcomp]{[z:\C]\, ... \vdash \comp[\congr_F[\refl_z],K] = \comp[K,\congr_G[\refl_z]] : \hom(F(\n z), G(z))}
    \infer1[\RuleJeq]{[a:\Cop, b:\C]\, f:\hom_\C(a,b), ... \vdash \comp[\congr_F[f],K] = \comp[K,\congr_G[f]]: \hom(F(a), G(b))}
    \end{prooftree}
  \]
  where the equations used follow by the computation rules for $\congr$ and left and right unitality of $\comp$. Note that $\RuleJeq$ can be used since $a,b$ appear precisely with the correct types that allow for $\RuleJ$ to be applied to contract the equality.

  This naturality can then be used to prove a suitable internal Yoneda lemma for the $\hom$ of categories by following the standard argument, e.g., given in \cite{Leinster2014basic}.
  \end{example}

\begin{example}[Identity natural transformation]
  \label{ex:identity_natural}
We show the existence of the identity natural transformation for terms, given a functor $\C \vdash F : \D$:
\[
\begin{prooftree}
  \infer0[\Rulerefl+\Rulereindex]{[x:\C]\ \emptyctx \vdash F^*(\textsf{refl}_x) : \hom_\D(F(\n x),F(x))}
  \infer1[\Ruleend]{[]\ \emptyctx \vdash \textsf{end}^{-1}(l) : \Endf{x:\C}\hom_\D(F(\n x),F(x))}
\end{prooftree}
\]
\end{example}

\begin{example}[Composition of natural transformations]
  \label{ex:compose_naturals}
We show that natural transformations between terms, expressed as an end~\cite[1.4.1]{Loregian2021coend}, can be composed. Take functors $\C \vdash F,G,H : \D$; first, consider the following elementary derivations:
\[
\begin{prooftree}
  \hypo{[\,]\,l :\Endf{x:\C}\hom_\C(F(\n x),G(x)), r : \Endf{x:\C}\hom_\C(G(\n x),H(x)) \vdash l : \Endf{x:\C}\hom_\C(F(\n x),G(x))}
  \infer1[\Ruleendinv]{[x:\C]\,l :\Endf{x:\C}\hom_\C(F(\n x),G(x)), r : \Endf{x:\C}\hom_\C(G(\n x),H(x)) \vdash \textsf{end}^{-1}(l) : \hom_\C(F(\n x),G(x))}
\end{prooftree}
\]
\[
\begin{prooftree}
  \hypo{[\,]\,l :\Endf{x:\C}\hom_\C(F(\n x),G(x)), r : \Endf{x:\C}\hom_\C(G(\n x),H(x)) \vdash l : \Endf{x:\C}\hom_\C(F(\n x),G(x))}
  \infer1[\Ruleendinv]{[x:\C]\,l :\Endf{x:\C}\hom_\C(F(\n x),G(x)), r : \Endf{x:\C}\hom_\C(G(\n x),H(x)) \vdash \textsf{end}^{-1}(r) : \hom_\C(G(\n x),H(x))}
\end{prooftree}
\]
Then, we take the statement for transitivity of directed equality, and reindex $a$ with $F(a)$, $b$ with $G(b)$, and $c$ with $H(c)$:
\[
\begin{prooftree}
  \infer0[\RuleJ]{[a:\Dop,b:\D,c:\D]\ {f:\hom_\D(a,b),\, g:\hom_\D(\n b,c)} & \vdash \textsf{comp} : \hom_\D(a,c)}
  \infer1[\Rulereindex]{[a:\Cop,b:\C,c:\C]\ {f:\hom_\D(F(a),G(b)),\, g:\hom(G(\n b),H(c))} & \vdash \textsf{comp}'[f,g] : \hom_\D(F(a),H(c))}
\end{prooftree}
\]
Now we can perform the composition of this map with the entailments above, which can be done because $\comp$ is individually \emph{natural} in $a,b$, and $b,c$. Composing $l$ into $\textsf{comp}$ contracts $a,b$ to the same variable $z$, while still allowing the other map to be later composed in the equality with $z,c$. Finally, we reintroduce the end quantifier.
\begin{adju}[1.0]
\begin{prooftree}
  \infer0[\Rulecutnat]{[z:\Cop,c:\C]\ {l:...\,,r:...\,,\, g:\hom(G(\n z),H(c))} & \vdash \textsf{comp}'[\textsf{end}^{-1}(l),g] : \hom_\D(F(z),H(c))}
  \infer1[\Rulecutnat]{[w:\C]\ {l:...\,,r:...\,,} & \vdash \textsf{comp}'[\textsf{end}^{-1}(l),\textsf{end}^{-1}(r)] : \hom_\D(F(\n w),H(w))}
  \infer1[\Ruleend]{[\,]\ {l:...\,,r:...\,,} & \vdash \textsf{end}(\textsf{comp}'[\textsf{end}^{-1}(l),\textsf{end}^{-1}(r)]) : \End{w:\C} \hom_\D(F(\n w),H(w))}
\end{prooftree}
\end{adju}
Associativity of the map above follows from associativity of $\comp$ as in the standard case.
\end{example}

\begin{example}[Directed equality in opposite categories]\label{thm:dir_eq_op}
  We do not ask that predicates $[x:\C, y:\Cop]\ {\hom}_{\Cop}(x,y)$ and $[x:\C, y:\Cop]\ {\hom}_{\C}(y,x)$ are definitionally equal in the equational theory (although this would arguably be a desirable choice), but we can prove by directed equality induction that they are isomorphic:
  \[
  \begin{prooftree}
  \infer0[\Rulerefl]{[z:\C,\Gamma]\ \phantom{f:{\hom}_{\Cop}(x,y), }\Phi & \vdash \mathsf{\refl}_z : {\hom}_\C(\n z, z)}
  \infer1[\RuleJ]{[x:\C,y:\Cop,\Gamma]\ f:{\hom}_{\Cop}(x,y), \Phi & \vdash J(\mathsf{\refl}_z)[f] : {\hom}_\C(y, x)}
  \end{prooftree}
  \]
  Rule $\RuleJ$ can be applied since $x,y$ appear covariantly in the conclusion. The inverse direction is identical:
  \[
  \begin{prooftree}
  \infer0[\Rulerefl]{[z:\C,\Gamma]\ \phantom{f:{\hom}_{\C}(y,x), }\Phi & \vdash \mathsf{\refl}_z : {\hom}_{\Cop}(z, \n z)}
  \infer1[\RuleJ]{[x:\C,y:\Cop,\Gamma]\ f:{\hom}_{\C}(y,x), \Phi & \vdash J(\mathsf{\refl}_z)[f] : {\hom}_{\Cop}(y, x)}
  \end{prooftree}
  \]
  In one direction, they compose (since they are both naturals) to the identity by directed equality induction:
  \[
  \begin{prooftree}
  \infer0[\RuleJcomp]{[z:\C,\Gamma]\ \Phi \vdash J(\mathsf{\refl}_z)[J(\mathsf{\refl}_z)[\mathsf{\refl}_z]] = J(\mathsf{\refl}_z)[\mathsf{\refl}_z] = \mathsf{\refl}_z : {\hom}_{\Cop}(z, \n z)}
  \infer1[\RuleJeq]{[x:\C,y:\Cop,\Gamma]\ f:{\hom}_{\C}(y,x), \Phi \vdash J(\mathsf{\refl}_z)[J(\mathsf{\refl}_z)[f]] = f : {\hom}_{\Cop}(y, x)}
  \end{prooftree}
  \]
  The other direction is analogous.
\end{example}

\section{Other rules derivable from the adjoint formulation}
\label{sec:appendix:adjoint_other_rules}

The following series of examples captures natural deduction-style rules for coends, where coends are on the right side of the turnstile.

\begin{example}[Elimination for coends]
The following derivation captures an elimination rule for coends, where $[\Gamma, d\!:\!\Delta]\ \Phi(d) \propctx, Q(d) \prop$, $[x\!:\!\Cop,y\!:\!\C,d\!:\!\Delta]\ P(x,y,d) \prop$, with variables in $\Delta$ always being used \emph{naturally}:%
\vspace{-1.0em}\begin{adju}[1.0]%
\begin{prooftree}
\infer[no rule]0{[\Gamma,d\!:\!\Delta]\ \Phi(d) \vdash \Coendf{x:\C} P(\n x,x,d)}
\infer1[\Rulecoendinv]{[d\!:\!\Delta]\ \Coendf{\gamma:\Gamma} \Phi(\n \gamma,\gamma,d) \vdash \Coendf{x:\C} P(\n x,x,d)}
\infer[no rule]0{[\Gamma,z:\C,d\!:\!\Delta]\ P(\n z,z,d),\Phi(\n \gamma, \gamma, d) \vdash Q(d)}
\infer1[\Rulecoendinv]{[\Gamma,d\!:\!\Delta]\ \Coendf{z:\C} P(\n z,z,d),\Phi(\n \gamma, \gamma, d) \vdash Q(d)}
\infer1[\hspace{-0.6em}$\begin{array}{l}\Rulecoendinv\,+\\\Ruleend\end{array}$]{[d\!:\!\Delta]\ P(\n z,z,d),\Coendf{\gamma:\Gamma} \Phi(\n \gamma, \gamma, d) \vdash \Endf{\gamma:\Gamma} Q(\n \gamma, \gamma,d)}
\infer2[\Rulecutnat]{[d:\Delta]\ \Coendf{\gamma:\Gamma} \Phi(\n \gamma, \gamma, d) \vdash \Coendf{\gamma:\Gamma} Q(\n \gamma, \gamma, d)}
\infer1[\Rulecoendfrobenius+\Ruleendinv]{[\Gamma,d:\Delta]\ \Phi(d) \vdash Q(d)}
\end{prooftree}
\end{adju}
\end{example}

\begin{example}[Introduction for coends with a term]
The following derivation captures an introduction rule for coends with a generic term $\Delta \vdash F : \C$ (not a \emph{di}term), for $[\Gamma, d:\Delta]\ \Phi(d) \propctx$, $[x:\C,d:\Delta]\ Q(x,d) \prop$:
\[
\begin{prooftree}
\infer[no rule]0{[\Gamma,d:\Delta]\ \Phi(d) \vdash Q(F(d),d)}
\infer0[\Rulecoendunit]{[x:\C,d:\Delta]\ Q(x,d) \vdash \Coendf{x:\C} Q(x,d)}
\infer1[\Rulereindex]{[d:\Delta]\ Q(F(d),d) \vdash \Coendf{x:\C} Q(x,d)}
\infer2{[\Gamma,d:\Delta]\ \Phi(d) \vdash \Coendf{x:\C} Q(x,d)}
\end{prooftree}
\]
In particular, we picked $\Rulecoendunit$ with $Q$ depending on just a single variable and reindexed with $F$, which ignores the negative context.  Note that variables in $\Delta$ are always used naturally.
\end{example}

\begin{example}[Introduction for coends with a dinatural variable]
The following derivation captures an introduction rule for coends with a dinatural variable $x$, for $[x:\Cop,y:\C, \Gamma, d:\Delta]\ \Phi(x,y,d) \propctx$, $[x:\Cop,y:\C,d:\Delta]\ Q(x,y,d) \prop$:
\[
\begin{prooftree}
\infer[no rule]0{[\Gamma,x:\Cop,y:\C]\ \Phi(x,y,d) \vdash Q(x,y,d)}
\infer0[\Rulecoendunit]{[d:\Delta]\ Q(\n z,z,d) \vdash \Coendf{z:\C} Q(\n z,z,d)}
\infer2[\Rulecutdin]{[\Gamma]\ \Phi \vdash \Coendf{z:\C} Q(\n z,z,d)}
\end{prooftree}
\]
In particular, we picked $\Rulecoendunit$ with $Q$ depending naturally on $x,y,z$. Note that variables in $\Delta$ are always used naturally.
\end{example}

\section{(Co)end calculus, other derivations}
\label{appendix:sec:other_derivations_coend_calculus}

We report here additional examples of derivations for (co)end calculus using our rules.

\begin{example}[Pointwise fomula for left Kan extensions] Dually to \Cref{exa:right_kan}, we give a logical proof that the functor $\Lan_F : [\C,\Set] \> [\D,\Set]$ sending (co)presheaves to their left Kan extensions along $F : \C \> \D$ computed via coends \cite[2.3.6]{Loregian2021coend} is left adjoint to precomposition $(F\<-) : [\D,\Set] \> [\C,\Set]$. For any $[x:\C]\ P(x)\prop$, a functor/term $\C \vdash F : \D$ and a generic $[y:\D]\ \varphi(y) \prop$:
\newlength{\coenddepth}
\settodepth{\coenddepth}{$\Coendf{y:\C}$}
\[
\begin{prooftree}
\infer[no rule]0{[y:\D]\ \takespace{\Coendf{y:\C} \hom_\D(F(y),\overline x) \x P(y)}{(\Lan_F P)(x)} := & }
\infer[no rule]1{\raisebox{0pt}[11pt][\coenddepth]{$\Coendf{x:\C}$} \hom_\C(F(\n x), y) \x P(x) \vdash & \varphi(y)}
\infer[double]1[\Rulecoendfrobenius]{[x:\C,y:\D]\ \hom_\C(F(\n x), y) \x P(x) \vdash & \varphi(y)}
\infer[double]1[\Ruleexp]{[x:\C,y:\D]\ P(x) \vdash & \hom_\C(F(x),\overline y)
 \Rightarrow \varphi(y)}
\infer[double]1[\Ruleend]{[x:\C]\ P(x) \vdash & \Endf{y:\D} \hom_\D(F(x),\overline y) \Rightarrow \varphi(y)}
\infer[double]1[\Ruleyoneda]{[x:\C]\ P(x) \vdash & \varphi(F(x))}
\end{prooftree}
\]
\end{example}

\begin{example}[Right rifts in profunctors]
We give a logical proof that composition (on both sides) in $\Prof$ has a right adjoint \cite[5.2.5 and Exercise 5.2]{Loregian2021coend}. This makes $\Prof$ a bicategory where \emph{right extensions} and \emph{right lifts} exist.
For simplicity we only treat precomposition, although postcomposition is completely analogous.
For any composable profunctors $[x:\Cop,y:\A]\ P(x,y)\prop$,$[x:\Aop,y:\D]\ Q(x,y)\prop$ and a generic $[x:\Cop,y:\D]\ \varphi(x,y)\prop$:
\[
\begin{prooftree}
\infer[no rule]0{[x:\Cop,z:\D]\takespace{\Coendf{y:\A} P(x,y) \times Q(\n y,z)}{(P \< -)(Q)(x,z)} := &}
\infer[no rule]1{\Coendf{y:\A} P(x,y) \times Q(\n y,z) \vdash & \varphi(x,z)}
\infer[double]1[\Rulecoendfrobenius]{[x:\Cop,y:\A,z:\D]\ P(x,y) \times Q(\n y,z) \vdash & \varphi(x,z)}
\infer[double]1[\Ruleexp]{[x:\Cop,y:\A,z:\D]\ Q(\n y,z) \vdash & P(\n x,\n y) \Rightarrow \varphi(x,z)}
\infer[double]1[\Ruleend]{[y:\A,z:\D]\ Q(\n y,z) \vdash & \Endf{x:\C} P(\n x,\n y) \Rightarrow \varphi(x,z)}
\infer[double]1[\Ruleop]{[y:\Aop,z:\D]\ Q(y,z) \vdash & \Endf{x:\C} P(\n x,y) \Rightarrow \varphi(x,z)}
\infer[no rule]1{ := & \Rift_P(\varphi)(y,z) \phantom{\Coendf{y:\A}}}%
\end{prooftree}
\]
where the last $\Ruleend$ can be applied since $x:\C$ does not appear on the left.
\end{example}

\begin{example}[Composition of profunctors is associative]
Using our approach relying on contextual operations we easily show that composition of profunctors, defined via a coend \cite{Loregian2021coend}, is associative and essentially follows from associativity of products. For composable profunctors $[x:\Aop,y:\B]\ P(x,y) \prop$, $[x:\Bop,y:\C]\ Q(x,y) \prop$, $[x:\Cop,y:\D]\ R(x,y) \prop$, and a generic $[x:\Aop,y:\D]\ \varphi(x,y) \prop$:

\[
\begin{prooftree}
\infer[no rule]0{[a:\A,d:\D]\ \Coendf{b:\B} P(\n a,b) \times \left(\Coendf{c:\C} Q(\n b,c) \times R(\n c,d) \right) \vdash & \varphi(\n a,d)}
\infer[double]1[\Rulecoendfrobenius]{[a:\A,b:\B,d:\D]\ P(\n a,b) \times \left(\Coendf{c:\C} Q(\n b,c) \times R(\n c,d) \right) \vdash & \varphi(\n a,d)}
\infer[double]1[\Rulecoendfrobenius]{[a:\A,b:\B,c:\C,d:\D]\ P(\n a,b) \times (Q(\n b,c) \times R(\n c,d)) \vdash & \varphi(\n a,d)}
\infer[double]1[\textsf{(structural property)}]{[a:\A,b:\B,c:\C,d:\D]\ (P(\n a,b) \times Q(\n b,c)) \times R(\n c,d) \vdash & \varphi(\n a,d)}
\infer[double]1[\Rulecoendfrobenius]{[a:\A,c:\C,d:\D]\ \left(\Coendf{b:\B} P(\n a,b) \times Q(\n b,c)\right) \times R(\n c,d) \vdash & \varphi(\n a,d)}
\infer[double]1[\Rulecoendfrobenius]{[a:\A,d:\D]\ \Coendf{c:\C} \left(\Coendf{b:\B} P(\n a,b) \times Q(\n b,c)\right) \times R(\n c,d) \vdash & \varphi(\n a,d)}
\end{prooftree}
\]
\end{example}

\begin{theorem}[Dinaturals as an end]
\label{appendix:dinats_as_ends}
The set of dinaturals $\mathsf{Dinat}(P,Q) := \set{P \dinat Q}$ between dipresheaves $P,Q:\Cop\x\C\>\Set$ can be characterized in terms of the following end \cite[Thm. 1]{Dubuc1970dinatural}, $\mathsf{Dinat}(P,Q) \iso \Endf{x:\C} P(\xnx) \Rightarrow Q(\nxx)$.
\end{theorem}
\begin{proof}
We give a simple derivation that characterizes all the points (i.e., dinaturals from the point in the empty term context) of the end above using our syntax:
\[
\takespace{\mathsf{Dinat}(P,Q) :=\ }{ }
\begin{prooftree}
\infer[no rule]0{\takespace{\,}{\smash{\mathsf{Dinat}(P,Q) :=\ }} [x: \C]\ P(\nxx) & \vdash Q(\nxx)}
\infer[double]1[\Ruleexp]{[x: \C]\ \emptyctx & \vdash P(\xnx) \Rightarrow Q(\nxx)}
\infer[double]1[\Ruleend]{[\, ]\ \emptyctx & \vdash \Endf{x:\C} P(\xnx) \Rightarrow Q(\nxx)}
\end{prooftree}\vspace{-0.2em}
\]
Since dinaturals generalize naturals, a similar derivation justifies the well-known description of natural transformations as ends shown in \Cref{sec:introduction} for $F,G:\C\>\Set$,\[\Nat(F,G) \iso \Endf{x:\C} F(\overline x) \Rightarrow G(x).\]
\end{proof}

\section{Computation rule via $J^{-1}$}
\label{appendix:computation_jinv}
We spell out the proof of the computation rule for the definition of $J^{-1}$ given in \Cref{thm:refl_from_hexagon}.

\begin{theorem}[$\J^{-1}\!\!\iff\!\!\refl$]
\!Rule $\Rulerefl$ is logically equivalent to $\RuleJinv$; in particular, assuming naturality of $J^{-1}$, if one defines $\refl_\C := J^{-1}(e)$ then the computation rule $J(h)[\refl_\C] = h$ holds in the equational theory.
\end{theorem}
\begin{proof}
We start by spelling out naturality of $J^{-1}$ in $P$, which is assumed: explicitly, naturality states that the following two derivations are equal in the equational theory for any $\alpha$ and $\beta$ (simplifying the context as much as possible for readability):
\[
\begin{prooftree}
\hypo{[a:\Cop,b:\C]\ e:\hom_\C(a,b), \Phi(\n b,\n a) & \vdash \alpha : P(a,b)}
\infer1{[z:\C]\ \Phi(\n z,z) & \vdash \J^{-1}(\alpha[e]) : P(\n z,z)}
\hypo{[z:\C]\ k:P(a,b), \Phi(\n a,\n b) & \vdash \beta[k] : Q(a,b)}
\infer2{[z:\C]\ \Phi(\n z,z) & \vdash \beta[\J^{-1}(\alpha)] : Q(\n z,z)}
\end{prooftree}
\]
and
\[
\begin{prooftree}
\hypo{[a:\Cop,b:\C]\ e:\hom_\C(a,b), \Phi(\n b,\n a) & \vdash \alpha : P(a,b)}
\hypo{[z:\C]\ k:P(a,b), \Phi(\n a,\n b) & \vdash \beta[k] : Q(a,b)}
\infer2{[z:\C]\ e:\hom_\C(a,b), \Phi(\n b,\n a) & \vdash \beta[\alpha] : P(\n z,z)}
\infer1{[z:\C]\ \Phi(\n z,z) & \vdash \J^{-1}(\beta[\alpha]) : Q(\n z,z)}
\end{prooftree}
\]
i.e., $\beta[\J^{-1}(\alpha)] = \J^{-1}(\beta[\alpha])$. In our particular case we take $P(a,b) := \hom(a,b)$ and $\alpha := e$ the projection with $\Rulevar$ and $\beta := J(h)$, from which we obtain that $J(h)[\textsf{refl}_\C] \equiv J(h)[J^{-1}(e)] = J^{-1}(J(h)[e]) = J^{-1}(J(h)) = h$ by the assumption that $J^{-1}(J(h)) = h$ and the fact that $\Rulevar$ is the identity for cut.
\end{proof}

\section{Frobenius and Beck-Chevalley conditions for (co)ends}\label{appendix:bc_frobenius}

\begin{theorem}[Beck-Chevalley and Frobenius condition for (co)ends]
\label{thm:theorem_bc_frobenius}
(Co)ends satisfy a \emph{Beck-Chevalley condition}, in the sense that for all $F: \C^\diamond\>\D$ there is a strict isomorphism \[\Endf{\A[\D]}~\< \,F^* \iso (\id_{\A^\diamond}~\times F)^* \< \Endf{\A[\C]}\] in the  (large) functor category $[[\A^\diamond\hspace{-1pt}\x\D^\diamond\hspace{-2pt},\Set],\hspace{-2pt}[\D^\diamond\hspace{-2pt},\Set]]$, where \[\Endf{\A[\C]}, \Coendf{\A[\C]} : [\A^\diamond \x \C^\diamond,\Set] \> [\C^\diamond,\Set]\] are the functors sending dipresheaves to their (co)end in $\A$ and $F^*: [\D^\diamond,\Set] \> [\C^\diamond,\Set]$ is precomposition with $F^\diamond$.

Moreover, a \emph{Frobenius condition} for coends is satisfied, in the sense that there is an isomorphism \[\Coendf{\A[\C]} (\pi_{\A[\C]}^*(P) \times \Phi) \iso \pi_{\A[\C]}^*(P) \times \Coendf{\A[\C]}(\Phi)\] \emph{natural} in $\Phi:\A^\diamond\times\C^\diamond\>\Set,P:\C^\diamond\>\Set$, where $- \times -\!:\![\C,\Set] \x [\C,\Set]\!\>\![\C,\Set]$ for any $\C$ is the product of (di)presheaves.
\end{theorem}
\begin{proof}
Beck-Chevalley is immediate. For Frobenius, our logical rules can be used to apply exactly the argument given in~\cite[1.9.12(i)]{Jacobs1999categorical}, detailed in \Cref{appendix:thm:frobenius}.
\end{proof}

\begin{theorem}[Frobenius condition for coends]
\label{appendix:thm:frobenius}
For any $\G:\A^\diamond\x\C^\diamond\>\Set$ and a generic $K:\C^\diamond\>\Set$, the following series of derivations gives a logical proof of the Frobenius condition given in \Cref{thm:theorem_bc_frobenius}, which we prove by following exactly the argument given in \cite[1.9.12(i)]{Jacobs1999categorical} in the case of fibrations with exponentials. In particular, we show that the Frobenius formulation of (co)ends follows from the non-Frobenius one combined with polarized exponentials. Note that we use the same Yoneda technique described in \Cref{rem:dinatu_iso}.
\[
\begin{prooftree}
\hypo{[\Gamma]\ \Coendf{x:\A[\Gamma]} (P \times \Phi(\n x,x)) & \vdash \varphi}
\infer[double]1[\Rulecoend]{[x:\A,\Gamma]\ P, \Phi(\n x,x) & \vdash \varphi}
\infer[double]1[\Ruleexp]{[x:\A,\Gamma]\ \Phi(\n x,x) & \vdash P \Rightarrow \varphi}
\infer[double]1[\Rulecoend]{[\Gamma]\ \Coendf{x:\A[\Gamma]} \Phi(\n x,x) & \vdash P \Rightarrow \varphi}
\infer[double]1[\Ruleexp]{[\Gamma]\ P, \Coendf{x:\A[\Gamma]} \Phi(\n x,x) & \vdash \varphi}
\end{prooftree}
\]
\end{theorem}
\begin{theorem}[$\Rulecoend\!\Rightarrow\!\Rulecoendfrobenius$]
The rule $\Rulecoendfrobenius$ can be directly justified using $\Rulecoend$, as follows:
\[
\begin{prooftree}
\infer[no rule]0{[\Gamma]\ {\left(\Coendf{a:\A} Q(\naa)\right)\!, \Phi} & \vdash \varphi}
\infer[double]1[\Ruleexp]{[\Gamma]\ {\Coendf{a:\A} Q(\naa)\! } & \vdash \Phi(\xnx) \Rightarrow \varphi}
\infer[double]1[\Rulecoend]{[y : \C, \Gamma]\ {Q(\naa)} & \vdash \Phi(\xnx) \Rightarrow \varphi}
\infer[double]1[\Ruleexp]{[\Gamma]\ {Q(\naa) ,\Phi} & \vdash \varphi}
\end{prooftree}
\]
\end{theorem}

\section{Yoneda technique}\label{sec:appendix:yoneda_technique}

We show how the Yoneda technique described in \Cref{rem:dinatu_iso} can be used to prove a derivation of (co)end calculus. We show the case of Yoneda \Cref{thm:derivation_coyoneda}.
\[
\begin{prooftree}
   \infer[no rule]0{[a\!:\!\C]~\Phi(a) \vdash \Endf{x:\C}\ {\hom}_\C(a,\overline x) \Rightarrow P(x)}
   \infer[double]1[\Ruleend]{[a\!:\!\C,x\!:\!\C]~\Phi(a) \vdash \hom_\C(a,\overline x) \Rightarrow P(x)}
   \infer[double]1[\Ruleexp]{[a\!:\!\C,x\!:\!\C]\hspace{1.6pt}\ \hom_\C(\overline a,x) \x \Phi(a) \hspace{1.3pt} \vdash P(x)}
   \infer[double]1[\RuleJ]{[z:\C]~\Phi(z) \vdash P(z)}
\end{prooftree}
\]
Explicitly, the two entailments witnessing the isomorphism are obtained by picking $\Phi$ to be the context with a single formula and the $\Rulevar$ case at the top of the derivation, i.e.,
\[
\begin{prooftree}
   \infer0[\Rulevar]{[z:\C]~k:P(z) \vdash k : P(z)}
   \infer1[\RuleJ]{[a\!:\!\C,x\!:\!\C]~k:P(a), {\hom}_\C(\n a, x) \vdash \J(k) : P(x)}
   \infer1[\Ruleexp]{[a\!:\!\C,x\!:\!\C]~k:P(a) \vdash \textsf{exp}(\J(k)) : {\hom}_\C(a,\overline x) \Rightarrow P(x)}
   \infer1[\Ruleend]{[a\!:\!\C]~k:P(a) \vdash \mathsf{end}(\textsf{exp}(\J(k))) : \Endf{x:\C}\ {\hom}_\C(a,\overline x) \Rightarrow P(x)}
\end{prooftree}
\]
and
\[
\begin{prooftree}
   \infer0[\Rulevar]{[a\!:\!\C]~k:\Endf{x:\C}\ {\hom}_\C(a,\overline x) \Rightarrow P(x) \vdash k : \Endf{x:\C}\ {\hom}_\C(a,\overline x) \Rightarrow P(x)}
   \infer1[\Ruleendinv]{[a\!:\!\C,x\!:\!\C]~k:\cdots \vdash \hom_\C(a,\overline x) \Rightarrow P(x)}
   \infer1[\Ruleexpinv]{[a\!:\!\C,x\!:\!\C]\hspace{1.6pt}\ k:\cdots, \hom_\C(\overline a,x) \hspace{1.3pt} \vdash P(x)}
   \infer1[\RuleJinv]{[z:\C]~k:\Endf{x:\C}\ {\hom}_\C(z,\overline x) \Rightarrow P(x) \vdash \J^{-1}(\textsf{exp}^{-1}(\mathsf{end}^{-1}(k))) : P(z)}
\end{prooftree}
\]
These two entailments can clearly be composed since they are both natural transformations. They compose to the identity in both directions by using the same approach when proving fully faithfulness of the Yoneda embedding~\cite{Leinster2014basic}, i.e., using naturality of each rule in $\Phi$ to make them commute with cuts and then using the fact that all rules are invertible:
\[
\begin{prooftree}
   \infer0{[a\!:\!\C]~k:P(a) \vdash & \J^{-1}(\textsf{exp}^{-1}(\mathsf{end}^{-1}(k)))[k \mapsto \mathsf{end}(\textsf{exp}(\J(k)))]}
   \infer[no rule]1{ = & \J^{-1}(\textsf{exp}^{-1}(\mathsf{end}^{-1}(k))[k \mapsto \mathsf{end}(\textsf{exp}(\J(k)))])}
   \infer[no rule]1{ = & \J^{-1}(\textsf{exp}^{-1}(\mathsf{end}^{-1}(k)[k \mapsto \mathsf{end}(\textsf{exp}(\J(k)))]))}
   \infer[no rule]1{ = & \J^{-1}(\textsf{exp}^{-1}(\mathsf{end}^{-1}(k[k \mapsto \mathsf{end}(\textsf{exp}(\J(k)))])))}
   \infer[no rule]1{ = & \J^{-1}(\textsf{exp}^{-1}(\mathsf{end}^{-1}(\mathsf{end}(\textsf{exp}(\J(k))))))}
   \infer[no rule]1{ = & \J^{-1}(\textsf{exp}^{-1}(\textsf{exp}(\J(k))))}
   \infer[no rule]1{ = & \J^{-1}(\J(k))}
   \infer[no rule]1{ = & k: P(a)}
\end{prooftree}
\]

Note that we are propagating the cut along the hypothesis $k$ in context (this is only ambiguous in the rule $\Ruleexp$ since there are two hypotheses, where we leave $f:\hom(a,b)$ untouched).

The other direction is obtained analogously.

\section{Composite in \Cref{ex:internal_dinaturality}}
\label{sec:appendix:illustrate_internal_dinat}

Given a dinatural transformation
\[
[z:\C]\,k:P(\nzz) \vdash \alpha : Q(\nzz)
\]
we illustrate how the composite
\[
[a:\Cop, b:\C]\,f:\hom_\C(a,b), k:P(\n b,\n a) \vdash \subst_Q[(f,\refl_a),[\alpha[\subst_P[(\refl_b,f),k]]]] : Q(a,b)
\]
in \Cref{ex:internal_dinaturality} is indeed allowed by the cut rules of our type theory, i.e., that dinaturals compose. The well-formedness of \Cref{ex:internal_naturality} follows similarly since it is a special case of the one below. We construct one of the two sides of the equation, with the other one following similarly.

The key idea is that \textsf{subst} is essentially a natural transformation when saturated in the function $f$ (even partially). The \textsf{subst} of a predicate $[a:\Cop,b:\C]\ Q(z,b)$ depending on two variables corresponds to the following entailment:
\[
[a',b:\Cop, a,b':\C]\,f:\hom_{\C}(a',a), g:\hom_\C(b,b'), k:Q(\n a,\n b) \vdash \subst_P[f,g,k] : P(a',b')
\]
After precomposing $f$ with $\refl$ and renaming variables via \Cref{thm:op_of_entailments} note that the resulting map is \emph{natural in $z,b$} after currying the equality $g$ to the right.
\[
[b,z:\Cop, b':\C]\,g:\hom_\C(b,b'), k:P(z,\n b) \vdash \subst_P[\refl_z,g,k] : P(z,b')
\]
This map \emph{can be precomposed} with $\alpha$ by picking $b$ to be part of the variables of $\Gamma$ in the rule $\Rulecutdin$. The intuition for this, described in \Cref{sec:semantics} for the semantics of cut, is that one can take the (co)end over $b$ and obtain the above family as \emph{natural} in $z$ and $b'$, without $b$ appearing, which then \emph{can} be composed with $\alpha$ in the expression $\alpha[\textsf{subst}_P[(\textsf{refl}_b,f),k]]$. The remaining part of the term is then obtained by using $\Rulecutnat$ to compose with $\textsf{subst}_Q$ in an analogous way.

\bibliographystyle{ACM-Reference-Format}
\bibliography{iwilare}

\end{document}